\numberwithin{equation}{section}
\theoremstyle{plain}
\newtheorem{thrm}{Theorem}[section]
\newtheorem{lemma}[thrm]{Lemma}
\newtheorem{prop}[thrm]{Proposition}
\newtheorem{remark}{Remark}
\numberwithin{equation}{section}
	\def\MR#1{\href{https://urldefense.com/v3/__http://www.ams.org/mathscinet-getitem?mr=*1*7D*7BMR-*1__;IyUlIw!!DZ3fjg!oHzqzx-fk_p2DpzqdMBj54OAPH9BYzIUMFry2bFVial4aXytl4As9A_7US17LHEg$ }}
	\def\ARXIV#1{\href{https://urldefense.com/v3/__https://arxiv.org/abs/*1*7D*7BarXiv:*1__;IyUlIw!!DZ3fjg!oHzqzx-fk_p2DpzqdMBj54OAPH9BYzIUMFry2bFVial4aXytl4As9A_7UeWlCDxg$ }}
\def\DOI#1{\href{https://urldefense.com/v3/__https://doi.org/*1*7D*7Bdoi:*1__;IyUlIw!!DZ3fjg!oHzqzx-fk_p2DpzqdMBj54OAPH9BYzIUMFry2bFVial4aXytl4As9A_7UVT3CNkl }}
\def\P{\mathbb{P} }
\def\R{\mathbb{R} }
\def\N{\mathbb{N} }
\def\E{\mathbb{E} }
\def\F{\mathcal{F}}
\def\H{\mathcal{H}}
\begin{document}
\title[Branching Brownian motion in a periodic environment]{Branching Brownian motion in a periodic environment and
uniqueness of pulsating travelling waves}
\thanks{The research of this project is supported by the National Key R\&D Program of China (No. 2020YFA0712900).}
\author[Y.-X. Ren, R. Song and F. Yang]{Yan-Xia Ren, Renming Song and Fan Yang}
\address{Yan-Xia Ren\\ LMAM School of Mathematical Sciences \& Center for
Statistical Science\\ Peking University\\ Beijing 100871\\ P. R. China}
\email{yxren@math.pku.edu.cn}
\thanks{The research of Y.-X. Ren is supported in part by NSFC (Grant Nos. 12071011  and 11731009) and LMEQF.
}
\address{Renming Song\\ Department of Mathematics\\ University of Illinois at Urbana-Champaign \\ Urbana \\ IL 61801\\ USA}
\email{rsong@illinois.edu}
\thanks{The research of R. Song is supported in part by a grant from the Simons Foundation (\#429343, Renming Song)}
\address{Fan Yang\\ School of Mathematical Sciences \\ Peking University\\ Beijing 100871\\ P. R. China}
\email{fan-yang@pku.edu.cn}
\begin{abstract}
Using one-dimensional branching Brownian motion in a periodic environment, we give probabilistic proofs of the asymptotics and uniqueness of  pulsating travelling waves of the F-KPP equation in a periodic environment.
This paper is a sequel to [Ren et al. Branching Brownian motion in a periodic environment and
existence of pulsating travelling waves], in which we proved
the existence of the pulsating travelling waves in the supercritical and critical cases using the limits of the additive and derivative martingales of  branching Brownian motion in a periodic environment.
\end{abstract}
\maketitle

\noindent
{\bf AMS 2020 Mathematics Subject Classification}: Primary: 60J80; Secondary 35C07

\noindent
{\bf Keywords and phrases}: Branching Brownian motion; periodic environment; F-KPP equation; pulsating travelling waves; asymptotic behavior; uniqueness;
Bessel-3 process; Brownian motion; martingale change of measures

\section{Introduction}
 McKean \cite{Mc} established the connection between  branching Brownian motion (BBM) and
the Fisher-Kolmogorov-Petrovskii-Piskounov (F-KPP)
reaction-diffusion equation
\begin{equation}\label{KPPeq1}
\frac{\partial \mathbf u}{\partial t} = \frac{1}{2} \frac{\partial^2 \mathbf u}{\partial x^2} + \beta({\mathbf f}({\mathbf u})-{\mathbf u}),
\end{equation}
where $\mathbf f$ is the generating function of the offspring distribution and $\beta$ is the (constant) branching rate of BBM.
The F-KPP equation has been studied intensively by both analytic techniques (see, for example, Kolmogorov et al. \cite{KPP} and Fisher \cite{Fish}) and probabilistic methods
(see, for instance, McKean \cite{Mc}, Bramson \cite{Bramson78,Bramson83}, Harris \cite{Harris99} and Kyprianou \cite{Ky}).

A travelling wave solution of \eqref{KPPeq1} with speed $c$ is a solution of the following equation:
\begin{equation}\label{travel}
\frac{1}{2} \Phi_c'' + c\Phi_c' +
\beta({\mathbf f}(\Phi_c)-\Phi_c) = 0.
\end{equation}
If $\Phi_c$ is a solution of \eqref{travel}, then ${\mathbf u}(t,x) = \Phi_c(x-ct)$ satisfies \eqref{KPPeq1}.
Using the relation between the F-KPP equation \eqref{KPPeq1} and BBM,
Kyprianou \cite{Ky} gave probabilistic proofs of the existence, asymptotics and uniqueness of travelling wave solutions.
In this paper, we study the following more general F-KPP equation in which the constant $\beta$ is replaced by a continuous and 1-periodic function $\mathbf g$:
\begin{equation} \label{KPPeq2}
\frac{\partial \mathbf u}{\partial t} = \frac{1}{2} \frac{\partial^2 \mathbf u}{\partial x^2} +
\mathbf g\cdot
(\mathbf f(\mathbf u)-\mathbf u),
\end{equation}
where ${\mathbf u}:\R^+ \times \R \rightarrow [0,1]$.
In \cite{RSYa}, we have shown that the above equation is related to branching Brownian motion in a periodic environment.
	
Now we describe the branching Brownian motion in a periodic environment.
Initially there is a single particle $v$ at the origin of the real line. This particle moves as a standard Brownian motion $B = \{B(t), t\geq 0 \}$ and produces a random number of offspring,
$1+L$, after a random time $\eta_v$. We assume that $L$ has distribution $\{p_k, k\geq 0\}$ with $m:=\sum_{k\geq 0} kp_k\in(0,\infty)$. Let $b_v$ and $d_v$ be the birth time and death time of the particle $v$, respectively, and $X_v(s)$ be the location of the particle $v$ at time $s$, then $\eta_v = d_v-b_v$, the lifetime of $v$, satisfies
\begin{equation}\label{KPP-P}
\P_x
\left(\eta_v>t \;|\; b_v, \{X_v(s): s\geq b_v \} \right) =
\exp\left\{ -\int_{b_v}^{b_v+t} \mathbf g(X_v(s)) \mathrm{d}s \right\},
\end{equation}
where we assume the branching rate function $\mathbf g\in C^1(\R)$ is strictly positive and 1-periodic.
Starting from their points of creation, each of these children evolves independently.

Let $N_t$ be the set of particles alive at time $t$ and $X_u(s)$ be the position of the particle $u$ or its ancestor at time $s$ for any $u\in N_t$, $s\leq t$. Define
$$Z_t = \sum_{u\in N_t} \delta_{X_u(t)},$$
and $\F_t = \sigma(Z_s: s\leq t)$.
$\{Z_t: t\ge 0\}$ is called a branching Brownian motion in a periodic environment
(BBMPE).
Let $\P_x$ be the law of $\{Z_t: t\ge 0\}$ when the initial particle starts at $x\in\R$, that is $\P_x(Z_0=\delta_x) = 1$ and $\E_x$ be expectation with respect to $\P_x$.
For simplicity, $\P_0$ and $\E_0$ will be written as $\P$ and $\E$, respectively. Notice that the distribution of $L$ does not depend on the spatial location.
In the remainder of this paper, expectations with respect to $L$ will be written as
$\mathbf E$. The notation in this paper is the same as those in \cite{RSYa}.

As stated in \cite{RSYa}, the F-KPP equation related to BBMPE
is given by  \eqref{KPPeq2} with ${\mathbf f}(s)={\mathbf E}(s^{L+1})$.
Travelling wave solutions, that is solutions satisfying \eqref{travel}, do not exist. However, we can consider so-called pulsating travelling waves, that is, solutions $\mathbf u:\R^+\times\R\rightarrow [0,1]$ to \eqref{KPPeq2} satisfying
\begin{equation}\label{pul_travel}
\mathbf u(t+\frac{1}{\nu}, x) = \mathbf u(t,x-1),
\end{equation}
as well as the boundary condition
\begin{equation}
\lim_{x\rightarrow-\infty} \mathbf u(t,x) = 0,\quad \lim_{x\rightarrow+\infty} \mathbf u(t,x) =1,
\end{equation}
when $\nu>0$, and
\begin{equation}
\lim_{x\rightarrow-\infty} \mathbf u(t,x) = 1,\quad \lim_{x\rightarrow+\infty} \mathbf u(t,x) =0,
\end{equation}
when $\nu<0$. $\nu$ is called the wave speed.
It is known that there is a constant $\nu^*>0$ (defined below) such that
when $|\nu|<\nu^*$ (called subcritical case) no such solution exists, whereas for each $|\nu|\geq \nu^*$ (where $|\nu|>\nu^*$ is called the supercritical case and $|\nu|=\nu^*$ is called the critical case)
there exists a unique, up to time-shift, pulsating travelling wave (see Hamel et al. \cite{HNRR}).
	
In \cite{RSYa}, we studied the limits of the additive and derivative martingales of BBMPE, and by using these limits, gave a probabilistic proof for the existence of pulsating travelling waves.
In this paper, using the relation between BBMPE and related F-KPP equation, we give probabilistic proofs of the asymptotics and uniqueness of pulsating travelling waves.
These extend the results of Kyprianou \cite{Ky} for classical BBM to BBMPE.
However, the methods in  Kyprianou \cite{Ky} do not work for BBMPE.
We will adapt the ideas from \cite{Harris99}. The non-homogeneous nature of the environment makes the actual arguments much more delicate.

Before stating our main results, we first introduce the minimal speed $\nu^*$.
For every $\lambda\in\R$, let $\gamma(\lambda)$ and $\psi(\cdot,\lambda)$ be the principal eigenvalue and the corresponding positive eigenfunction of the periodic problem: for all $x\in \R$,
\begin{equation}\label{eigen}
\begin{split}
\frac{1}{2} \psi_{xx}(x, \lambda) - \lambda \psi_x(x, \lambda) + (\frac{1}{2}\lambda^2
+m\mathbf g(x))\psi(x, \lambda) &= \gamma(\lambda)\psi(x, \lambda),\\
\psi(x+1,\lambda) &= \psi(x,\lambda).
\end{split}
\end{equation}
We normalize $\psi(\cdot,\lambda)$ such that $\int_0^1 \psi(x,\lambda) dx = 1$. Define
\begin{equation}\label{crit_v}
\nu^*:=\min_{\lambda>0} \frac{\gamma(\lambda)}{\lambda},
\quad \lambda^*:= \underset{\lambda>0}{\arg\min} \frac{\gamma(\lambda)}{\lambda}.
\end{equation}
$\nu^*$ is  the minimal wave speed (see \cite{HNRR}) and the existence of $\lambda^*$ is proved in \cite{LTZ}.

Using the property $\mathbf u\left(t+\frac{1}{\nu},x\right) = \mathbf u(t,x-1)$, we can define $\mathbf u(-t,x)$ for any $t>0$. To be more specific, let $\lceil x \rceil$ be the smallest integer greater than or equal to $x$
and $\lfloor x \rfloor$ be the integral part of $x$.
When $\nu>0$, define
\begin{equation}
\mathbf u(-t,x) = \mathbf u\left(-t+\frac{\lceil \nu t \rceil}{\nu}, x+\lceil \nu t \rceil\right),\quad t>0, x\in\R.
\end{equation}
When $\nu<0$, define
\begin{equation}
\mathbf u(-t,x) = \mathbf u\left(-t+\frac{\lfloor \nu t \rfloor}{\nu}, x+\lfloor \nu t \rfloor\right),\quad t>0, x\in\R.
\end{equation}
Then $\mathbf u(t,x)$  satisfies the F-KPP equation \eqref{KPPeq2} and \eqref{pul_travel} in $\R\times\R$.

Our first two main results give the asymptotic behaviors of pulsating travelling waves in the supercritical case of $|\nu|>\nu^*$ and the critical case of $|\nu| = \nu^*$.
\begin{thrm}\label{thrm_asym_super}
Suppose  $\mathbf u(t,x)$ is
a pulsating travelling wave with speed $\nu> \nu^*$ and
$\lambda\in (0,\lambda^*)$ satisfies
$\nu=\frac{\gamma(\lambda)}{\lambda}$. If $\mathbf E(L\log^+L)<+\infty$, then there exists $\beta>0$ such that
	\begin{equation}
	1-\mathbf u\left(\frac{y-x}{\nu}, y\right) \sim \beta e^{-\lambda x}\psi(y,\lambda) \text{ as } x\rightarrow +\infty \text{ uniformly in } y\in [0,1].
	\end{equation}
\end{thrm}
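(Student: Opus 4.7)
The plan is to adapt the probabilistic strategy of Harris \cite{Harris99} to the periodic environment. The central tool is the additive martingale
\[
W_\lambda(t) := e^{-\gamma(\lambda)t}\sum_{u\in N_t} e^{-\lambda X_u(t)}\psi(X_u(t),\lambda),
\]
which under the present hypotheses converges a.s.\ and in $L^1$ to a strictly positive limit $W_\lambda(\infty)$, as established in \cite{RSYa}. Since $\mathbf u$ is extended to all of $\R\times\R$ as a solution of \eqref{KPPeq2}, the McKean representation $\mathbf u(t,y) = \E_y[\prod_{u\in N_s}\mathbf u(t-s,X_u(s))]$ holds for every $s>0$. Writing $V := 1-\mathbf u$ and using $\sum v_i - \tfrac{1}{2}(\sum v_i)^2 \leq 1-\prod(1-v_i) \leq \sum v_i$ sandwiches
\[
\E_y[A_s] - \tfrac{1}{2}\E_y[A_s^2] \leq V(t,y) \leq \E_y[A_s], \qquad A_s := \sum_{u\in N_s} V(t-s,X_u(s)).
\]

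I would then introduce the normalized unknown $H(\tilde x,y_0) := e^{\lambda\tilde x}\,V((y_0-\tilde x)/\nu,y_0)/\psi(y_0,\lambda)$ for $y_0\in[0,1]$. The identity $\gamma(\lambda)=\lambda\nu$, the pulsating relation $V(t+1/\nu,y')=V(t,y'-1)$ and the $1$-periodicity of $\psi$ together give $V(t',y') = e^{-\lambda(y'-\nu t')}\psi(y',\lambda)\,H(y'-\nu t',\{y'\})$ for every $(t',y')\in\R^2$, so the theorem reduces to proving $H(\tilde x,y_0)\to \beta>0$ uniformly in $y_0\in[0,1]$ as $\tilde x\to\infty$. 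Substituting this identity into $A_s$ displays it as a $W_\lambda$-weighted functional, and the upper sandwich becomes
\[
H(\tilde x,y) \leq \frac{e^{\lambda y}}{\psi(y,\lambda)}\,\E_y\!\left[e^{-\gamma(\lambda)s}\sum_{u\in N_s} e^{-\lambda X_u(s)}\psi(X_u(s),\lambda)\,H\bigl(\tilde x + X_u(s) + \nu s - y,\{X_u(s)\}\bigr)\right].
\]

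The argument then unfolds in three steps. First, I iterate the upper sandwich together with the identity $\E_y[W_\lambda(s)]=e^{-\lambda y}\psi(y,\lambda)$ and the two-sided bounds on the continuous 1-periodic $\psi$ to obtain a uniform a priori bound $C_*:=\sup H<\infty$, which supplies the integrable dominator $C_*W_\lambda(s)$. Second, for fixed $s$, I let $\tilde x\to\infty$: every argument $\tilde x + X_u(s)+\nu s-y$ tends to $+\infty$, and Fatou's lemma applied in both directions yields two-sided inequalities tying $\beta^\pm(y_0):=\limsup/\liminf_{\tilde x\to\infty}H(\tilde x,y_0)$ to $W_\lambda(s)$-weighted expectations of $\beta^\pm$ evaluated at the fractional parts $\{X_u(s)\}$. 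Third, I send $s\to\infty$ and exploit the $L^1$-convergence $W_\lambda(s)\to W_\lambda(\infty)$ together with the ergodic behaviour of the fractional part of the spine position under the change of measure $d\mathbb{Q}^y_\lambda/d\P_y=W_\lambda(s)/W_\lambda(0)$ (whose spine decomposition is provided in \cite{RSYa}) to force $\beta^+\equiv\beta^-\equiv\beta$, a single constant. Strict positivity of $\beta$ follows from the strict positivity of $V$ together with $W_\lambda(\infty)>0$ a.s.

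The main obstacle is controlling the quadratic error $\E_y[A_s^2]$ under the mere hypothesis $\mathbf E(L\log^+L)<\infty$, since a direct many-to-two estimate would force the strictly stronger condition $\mathbf E(L^2)<\infty$. I would circumvent this by combining the a priori exponential bound $V(t',y')\leq C_*\,e^{-\lambda(y'-\nu t')}\psi(y',\lambda)$ with a truncation on $X_u(s)+\nu s-\nu t$: the bulk contribution inherits an additional factor $e^{-\lambda\tilde x}$ from $V$, which makes it of lower order than $\E_y[A_s]$, while the far-tail contribution is controlled by the uniform integrability of $W_\lambda(s)$. The secondary but equally delicate point of showing that the subsequential limit of $H$ does not depend on the phase $y_0$ is precisely where the periodic eigenfunction structure and the ergodicity of the tilted spine's fractional part are used decisively.
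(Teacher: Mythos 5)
Your outline takes a genuinely different route from the paper (which works at the PDE level via Feynman--Kac, a single tilted diffusion $Y$ under $\Pi_x^{\lambda}$, a coupling of two independent copies, and the maximum principle), but as written it has gaps that I do not think your proposed fixes close.

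The most serious one is the quadratic error term. Under only $\mathbf E(L\log^+L)<\infty$ one may have $\mathbf E(L^2)=\infty$, and then $\E_y[A_s^2]=+\infty$ for every $s>0$: expanding $A_s^2$ and applying the many-to-two formula produces the factor $\mathbf E[(1+L)L]$ at each branch point, so the divergence comes from the offspring law, not from the spatial spread of the particles. A truncation on $X_u(s)+\nu s-\nu t$ therefore cannot repair the lower sandwich $V\geq \E_y[A_s]-\tfrac12\E_y[A_s^2]$, which becomes vacuous. The correct way to exploit exactly the $L\log L$ hypothesis is to avoid second moments altogether: the paper replaces the quadratic correction by the function $A(w)=m-\frac{1-w-\mathbf f(1-w)}{w}$, which is nonnegative, nondecreasing, and satisfies $\sum_n A(cr^n)<\infty$ if and only if $\mathbf E(L\log^+L)<\infty$ (see \eqref{iff-llogl}); integrating $A(\mathbf w)$ along the tilted diffusion path is then finite precisely under your hypothesis. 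If you insist on the branching-system route you would instead need a truncation of $L$ itself inside a spine decomposition, which is a substantially different argument from the one you describe.

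Two further points would need real work. First, your a priori bound $C_*=\sup H<\infty$ does not follow from iterating the upper sandwich: by many-to-one that inequality only says $H(\tilde x,y)\leq \Pi_y^{\lambda}\bigl[H(\tilde x+Y_s+\nu s-y,\{Y_s\})\bigr]$, i.e.\ $H$ is subharmonic along the tilted motion, and iterating a subharmonicity inequality cannot bound $H$ without already knowing it is bounded near $+\infty$. The paper obtains this bound by trapping an arbitrary point between two coupled diffusion paths along which $\widetilde{\mathbf w}$ is known to converge, and invoking the parabolic maximum principle for \eqref{v_equation}. Second, your Fatou/ergodicity step shows at best that each of $\beta^+(\cdot)$ and $\beta^-(\cdot)$ is constant in the phase; it does not force $\beta^+=\beta^-$. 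Sending $s\to\infty$ and using $W_t(\lambda)\to W(\lambda,z)$ in $L^1$ only sandwiches $\mathbf u$ between the Laplace functionals of $\beta^{\pm}e^{\gamma(\lambda)t}W(\lambda,\cdot)$, which is not a contradiction without a uniqueness input. The paper closes exactly this gap by proving that the single-path quantity $\widetilde{\mathbf w}(-t,Y_t)e^{-\int_0^t\mathbf g(Y_s)\mathbf w(-s,Y_s)\mathrm{d}s}$ is a positive $\Pi_x^{\lambda}$-martingale, hence converges a.s., and then using the coupling of two independent copies of $Y$ (whose difference oscillates between $\pm\infty$, so the paths meet at arbitrarily large times) to show the a.s.\ limit is a single deterministic constant $\beta$; Dini's theorem then upgrades this to the uniform convergence in the statement.
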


\begin{thrm}\label{thrm_asym_crit}
Suppose $\mathbf u(t,x)$ is
a pulsating travelling wave with speed $\nu= \nu^*$. If $\mathbf E(L(\log^+L)^2)<\infty$, then there exists $\beta>0$ such that
	\begin{equation}
	1-\mathbf u\left(\frac{y-x}{\nu^*}, y\right) \sim \beta xe^{-\lambda^* x}\psi(y,\lambda^*) \text{ as } x\rightarrow +\infty \text{ uniformly in } y\in [0,1].
	\end{equation}
\end{thrm}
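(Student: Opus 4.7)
The plan is to adapt the probabilistic approach of Harris \cite{Harris99} to BBMPE. Set $V(x,y):=1-\mathbf u\bigl(\tfrac{y-x}{\nu^*},y\bigr)$; the target is then $V(x,y)\sim\beta xe^{-\lambda^* x}\psi(y,\lambda^*)$ uniformly in $y\in[0,1]$. Starting from the semigroup identity $\mathbf u(t+T,x)=\E_x[\prod_{u\in N_T}\mathbf u(t,X_u(T))]$ applied with $t=\tfrac{y-x}{\nu^*}-T$, and using the extension of $\mathbf u$ to negative times provided by the pulsating relation \eqref{pul_travel}, one obtains the self-consistency equation
\[
V(x,y)=1-\E_y\Bigl[\prod_{u\in N_T}\bigl(1-V\bigl(X_u(T)+x-y+\nu^* T,\,X_u(T)\bigr)\bigr)\Bigr],
\]
in which the shifted first argument measures how far the particle $u$ has fallen behind the critical travelling speed $\nu^*$ on $[0,T]$.

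For large $x$ every $V$ on the right-hand side is small, so linearising $1-\prod_u(1-b_u)\approx\sum_u b_u$ and plugging in the conjectured shape gives, thanks to the relation $\gamma(\lambda^*)=\lambda^*\nu^*$,
\[
e^{\lambda^* x}V(x,y)\approx\beta\,\E_y\Bigl[\sum_{u\in N_T}\bigl(x+X_u(T)+\nu^* T-y\bigr)e^{-\lambda^*(X_u(T)+\nu^* T-y)}\psi(X_u(T),\lambda^*)\Bigr].
\]
The $x$-proportional part of the sum equals, by the martingale property of $W_t(\lambda^*):=\sum_u e^{-\lambda^* X_u(t)-\gamma(\lambda^*)t}\psi(X_u(t),\lambda^*)$ built from \eqref{eigen}, exactly $\beta x\psi(y,\lambda^*)$; the remainder is (a variant of) the derivative martingale $\partial W_T(\lambda^*)$, whose limit will identify the constant $\beta$. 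To make this heuristic rigorous I would change measure via $\partial W_t(\lambda^*)$ (with the standard sign-correction or conditioning device used in the critical case), so that under the tilted law the distinguished spine is a diffusion whose drift comes from $\psi_x(\cdot,\lambda^*)/\psi(\cdot,\lambda^*)$; after subtracting the $\nu^*$-drift this spine is a Doob $h$-transform of a Bessel-3 process, exactly as in \cite{Harris99}. A spinal decomposition argument then produces matching upper and lower bounds for $V(x,y)$, with the second-order error terms controlled by $\E(L(\log^+L)^2)<\infty$; by \cite{RSYa} this is precisely the condition that makes $\partial W_\infty(\lambda^*)$ a non-trivial positive random variable, and so guarantees $\beta>0$.

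The main obstacle, relative to the constant-coefficient case \cite{Harris99}, is the non-homogeneity of $\mathbf g$: the eigenfunction $\psi(\cdot,\lambda^*)$ enters the tilt, the spine's drift and the conclusion simultaneously, and every estimate has to be uniform in the phase $y\in[0,1]$. This uniformity is recovered from the periodicity and strict positivity of $\psi$, which give two-sided bounds on $\R$ and allow $\psi$ to be replaced by constants up to controlled errors. A second technical point is that $V(x,y)$ does not sit on a single time-slice but along the world-line $t=(y-x)/\nu^*$; the pulsating identity \eqref{pul_travel} has to be invoked throughout to keep the BBMPE expectations at genuinely non-negative times before the Harris-type spine machinery can be applied.
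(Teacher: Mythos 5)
There is a genuine gap here: what you have written is a heuristic identification of the answer, not a mechanism for proving it. The step ``linearise $1-\prod_u(1-b_u)\approx\sum_u b_u$ and plug in the conjectured shape'' is circular — the shape $\beta x e^{-\lambda^*x}\psi(y,\lambda^*)$ is precisely what must be established, and for an \emph{arbitrary} pulsating travelling wave you have no a priori control that would let you justify the linearisation or close the self-consistency equation. The paper's actual route (following Harris) never decomposes the branching process along a spine at all: it works with the nonlinear PDE for $\mathbf w=1-\mathbf u$ via the Feynman--Kac formula, so that the branching only enters as the potential $\mathbf g\,A(\mathbf w)$, and the central object is the \emph{single-particle} quantity $\widehat{\mathbf w}(-t,B_t,y)\,e^{-\int_0^t\mathbf g(B_s)\mathbf w(-s,B_s)\,\mathrm{d}s}$ with $\widehat{\mathbf w}(t,x,y)=e^{\lambda^*x-\gamma(\lambda^*)t}\mathbf w(t,x)/\bigl(\psi(x,\lambda^*)(y-\gamma'(\lambda^*)t+h(x))\bigr)$, which is a local martingale under the measure $\Pi_x^{(y,\lambda^*)}$ obtained by tilting a single Brownian motion with the truncated martingale $\Lambda_t^{(y,\lambda^*)}$ of \eqref{mart_Lambda} (under which $y+\gamma'(\lambda^*)T(t)+h(B_{T(t)})$ is a Bessel-3 process). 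That martingale is what rigorously encodes ``the linearised equation has $xe^{-\lambda^*x}\psi$ as a solution''; your proposal has no substitute for it. Relatedly, $\partial W_t(\lambda^*)$ is a signed martingale and cannot be used directly as a Radon--Nikodym density; the truncation by the barrier is not an optional ``sign-correction device'' but the object the whole argument is built on.

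Even granting a workable change of measure, three essential components of the proof are absent from your plan. First, you give no argument that the limit of $e^{\lambda^*x}\mathbf w/(x\psi)$ is a \emph{constant} independent of the phase and the starting point; the paper needs a delicate coupling of two independent tilted paths, showing that the associated space-time curves intersect infinitely often by exploiting fluctuations of the Bessel-3 process. Second, you give no argument that $\beta>0$; the paper proves this by contradiction, using the Bessel-3 hitting probability $z/(y+h(x))$ to show that $\beta=0$ would force $\mathbf w\equiv 0$. Third, the uniformity in $y\in[0,1]$ is not a consequence of two-sided bounds on $\psi$: it requires a maximum-principle argument on domains bounded by the random curves together with monotonicity and Dini's theorem. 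Finally, the role of $\mathbf E(L(\log^+L)^2)<\infty$ is not to make $\partial W(\lambda^*,\cdot)$ non-degenerate (that is used only later, for uniqueness); in this theorem it enters through the integral test $\int_0^1 A(s)|\log s|s^{-1}\,\mathrm{d}s<\infty$, which guarantees $\int_0^\infty \mathbf g(B_t)A(\mathbf w)(-t,B_t)\,\mathrm{d}t<\infty$ along the Bessel-3-type path whose displacement grows like $\sqrt{t}$.
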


\begin{remark}
	By symmetry, we also have the asymptotic behaviors of pulsating travelling waves with negative speed.
	In the supercritical case of $\nu < -\nu^*$, suppose $\mathbf u(t,x)$ is
	a pulsating travelling wave with speed
	$\nu$ and
	$\lambda\in (-\lambda^*,0)$ satisfies
	$\nu=\frac{\gamma(\lambda)}{\lambda}$. If $\mathbf E(L\log^+L)<+\infty$, then there exists $\beta>0$ such that
	\begin{equation}
	1-\mathbf u\left(\frac{y-x}{\nu}, y\right) \sim \beta e^{-\lambda x}\psi(y,\lambda) \text{ as } x\rightarrow -\infty \text{ uniformly in } y\in [0,1].
	\end{equation}
	In the critical case, suppose $\mathbf u(t,x)$ is
	a pulsating travelling wave with speed $-\nu^*$. If $\mathbf E(L(\log^+L)^2)<\infty$, then there exists $\beta>0$ such that
	\begin{equation}
	1-\mathbf u\left(\frac{y-x}{-\nu^*}, y\right) \sim \beta |x|e^{\lambda^* x}\psi(y,-\lambda^*) \text{ as } x\rightarrow -\infty \text{ uniformly in } y\in [0,1].
	\end{equation}
\end{remark}

For any $\lambda\in\R$, define
\begin{equation}\label{mart_add}
W_t(\lambda) = e^{-\gamma(\lambda)t} \sum_{u\in N_t} e^{-\lambda X_u(t)} \psi(X_u(t),\lambda),
\end{equation}	
and
\begin{equation}\label{mart_deriv}
\partial W_t(\lambda) :=
e^{-\gamma(\lambda)t} \sum_{u\in N_t} e^{-\lambda X_u(t)} \bigg{(} \psi(X_u(t),\lambda) (\gamma'(\lambda)t+X_u(t)) - \psi_{\lambda}(X_u(t),\lambda)  \bigg{)}.
\end{equation}

It follows from \cite[Theorem 1.1]{RSYa} that,
for any $\lambda\in\R$ and $x\in\R$, $\{(W_t(\lambda))_{t\geq 0}, \P_x\}$ is a martingale, and called the  additive martingale.
The limit $W(\lambda,x) := \lim_{t\uparrow\infty} W_t(\lambda)$ exists $\P_x$-almost surely.
Moreover, $W(\lambda,x)$ is an $L^1(\P_x)$-limit when $|\lambda|>\lambda^*$ and $\mathbf E(L\log^+L) < \infty$;
and $W(\lambda,x) = 0$ $\P_x$-almost surely when $|\lambda|\le\lambda^*$ or  $|\lambda|>\lambda^*$ and $\mathbf E(L\log^+L) =\infty$.
It follows from \cite[Theorem 1.2]{RSYa} that,
for any $\lambda\in\R$ and $x\in\R$,
$\{(\partial W_t(\lambda))_{t\geq 0}, \P_x\}$ is a martingale, and called the derivative martingale.  For all $|\lambda|\geq \lambda^*$, the limit $\partial W(\lambda,x) := \lim_{t\uparrow\infty} \partial W_t(\lambda)$ exists $\P_x$-almost surely.
Moreover, if  $\mathbf E(L(\log^+L)^2)<\infty$, $\partial W(\lambda,x)\in(0,\infty)$ when $\lambda=\lambda^*$, and  $\partial W(\lambda,x)\in(-\infty,0)$ when $\lambda=-\lambda^*$.   If  $|\lambda|> \lambda^*$ or $|\lambda|=\lambda^*$ and  $\mathbf E(L(\log^+L)^2)=\infty$, $\partial W(\lambda,x) = 0$ $\P_x$-almost surely.

Using Theorem \ref{thrm_asym_super}, Theorem \ref{thrm_asym_crit} and \cite[Theorem 1.3]{RSYa}, we can prove the following result, which gives the existence and uniqueness of pulsating travelling waves.

\begin{thrm}\label{thrm3}
$\mathrm{(i)}$  \textbf{Supercriticality case.}
If $|\nu|>\nu^*$ and $\mathbf E(L\log^+L) < \infty$,
then there is a unique, up to time-shift, pulsating travelling wave with speed $\nu$  given by
\begin{equation}
\mathbf u(t,x)
= \E_x \left(\exp\left\{ -e^{\gamma(\lambda)t} W(\lambda,x) \right\} \right),
\end{equation}
where $|\lambda| \in (0,\lambda^*)$ is such that
$\nu = \frac{\gamma(\lambda)}{\lambda}$.

$\mathrm{(ii)}$ \textbf{Criticality case.}
If $|\nu|=\nu^*$ and $\mathbf E(L(\log^+L)^{2}) < \infty$, then there is a unique,
up to time-shift, pulsating travelling wave with speed $\nu$ given by
\begin{equation}
 \mathbf u(t,x) = \E_x\left(\exp\left\{ -e^{\gamma(\lambda)t} \partial W(\lambda,x) \right\} \right),
\end{equation}
where $\lambda = \lambda^*$  if $\nu = \nu^*$, and $\lambda = -\lambda^*$ if $\nu = -\nu^*$.
\end{thrm}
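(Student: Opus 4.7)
The existence parts of (i) and (ii) are already contained in \cite[Theorem 1.3]{RSYa}: the expectations on the right-hand sides of both formulas are shown there to be pulsating travelling waves of the stated speeds. Hence the task reduces entirely to uniqueness, and this is precisely where Theorems \ref{thrm_asym_super} and \ref{thrm_asym_crit} enter.

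Fix an arbitrary pulsating travelling wave $\tilde{\mathbf u}$ with speed $\nu$, and treat the supercritical case $\nu>\nu^*$ (the case $\nu<-\nu^*$ being symmetric and the critical case entirely analogous). For each $s\in\R$ form the product
\[
M_t^s := \prod_{u\in N_t}\tilde{\mathbf u}(s-t,X_u(t)),\qquad t\ge 0.
\]
Since $\tilde{\mathbf u}$ solves the F-KPP equation \eqref{KPPeq2}, a standard branching-Markov/McKean-type argument shows that $(M_t^s)_{t\ge 0}$ is an $\F_t$-martingale under $\P_x$ with values in $[0,1]$. Bounded convergence yields an a.s.\ limit $M_\infty^s$ satisfying $\tilde{\mathbf u}(s,x)=\E_x[M_\infty^s]$, so the problem reduces to identifying $M_\infty^s$.

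Decompose $X_u(t)=\tilde y_u+k_u$ with $\tilde y_u\in[0,1)$ and $k_u\in\mathbb{Z}$; iterating \eqref{pul_travel} $k_u$ times gives $\tilde{\mathbf u}(s-t,X_u(t))=\tilde{\mathbf u}((\tilde y_u-x_u)/\nu,\tilde y_u)$ with $x_u:=X_u(t)+\nu(t-s)$. Theorem \ref{thrm_asym_super}, applied uniformly in $\tilde y_u\in[0,1]$ together with the identity $\lambda\nu=\gamma(\lambda)$ and the periodicity of $\psi(\cdot,\lambda)$, gives
\[
1-\tilde{\mathbf u}(s-t,X_u(t))\;\sim\;\beta e^{\gamma(\lambda)s}\,e^{-\gamma(\lambda)t}\,e^{-\lambda X_u(t)}\,\psi(X_u(t),\lambda)
\]
for particles with $x_u\to+\infty$. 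Summing over $u\in N_t$ produces, to leading order, $\beta e^{\gamma(\lambda)s}W_t(\lambda)$, which converges $\P_x$-a.s.\ to $\beta e^{\gamma(\lambda)s}W(\lambda,x)$. Combined with $-\log(1-z)=z+O(z^2)$, this identifies $-\log M_\infty^s=\beta e^{\gamma(\lambda)s}W(\lambda,x)$, so $\tilde{\mathbf u}(s,x)=\E_x[\exp(-e^{\gamma(\lambda)(s+\gamma(\lambda)^{-1}\log\beta)}W(\lambda,x))]$, which is the stated formula up to the time-shift $s\mapsto s+\gamma(\lambda)^{-1}\log\beta$. The critical case follows the same template, with Theorem \ref{thrm_asym_crit} and $\partial W$ replacing Theorem \ref{thrm_asym_super} and $W$; the identity $\gamma'(\lambda^*)=\nu^*$, which follows from the minimum defining $\nu^*$, matches the linear-in-$t$ and linear-in-$X_u$ terms of the asymptotic to the definition of $\partial W_t(\lambda^*)$, while the vanishing of $W_t(\lambda^*)$ in the critical case disposes of the remaining lower-order sums.

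The main obstacle is making the step ``$\sum_u(1-\tilde{\mathbf u})\sim\beta e^{\gamma(\lambda)s}W_t(\lambda)$'' rigorous. Theorems \ref{thrm_asym_super} and \ref{thrm_asym_crit} provide the pointwise asymptotic uniformly in $y\in[0,1]$ only as $x\to+\infty$, while the particles of a BBMPE fill the whole real line; a truncation argument is therefore needed to discard the contribution from particles with $x_u$ not sufficiently large, and to control the cumulative $O(z^2)$ error from the $-\log(1-z)$ expansion summed along the population. This is the natural analogue of the most delicate step in Harris \cite{Harris99}, and here it must be executed in the presence of the periodic weight $\psi(\cdot,\lambda)$, using the $L^1$-convergence (respectively the uniform integrability) of the additive (respectively derivative) martingale proved in \cite{RSYa}.
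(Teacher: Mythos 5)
Your overall strategy (existence from \cite[Theorem 1.3]{RSYa}, uniqueness by feeding the asymptotics of Theorems \ref{thrm_asym_super}--\ref{thrm_asym_crit} into a McKean-type product martingale whose log-limit is identified with $\beta e^{\gamma(\lambda)s}W(\lambda,x)$, resp. $\beta e^{\gamma(\lambda^*)s}\partial W(\lambda^*,x)$) is the right one, but you run it over the fixed-time population $N_t$, whereas the paper runs it over Chauvin-type stopping lines. The paper stops every line of descent at the space-time barrier $y+\nu t=x$, obtaining the stopping line $C(x,\nu)$, proves that $M_x(\nu)=\prod_{u\in C(x,\nu)}\mathbf u(-\sigma_u,X_u(\sigma_u))$ is a martingale in $x$ (Theorem \ref{thrm_mart_prod}, via an approximation by generations and the special Markov property), and then uses the fact that \emph{every} particle on $C(x,\nu)$ satisfies $X_u(\sigma_u)+\nu\sigma_u=x$ exactly: the asymptotic \eqref{asym_super_log} then applies uniformly to all particles simultaneously with the single parameter $x\to\infty$, and the resulting sum is \emph{literally} the stopping-line additive martingale $W_{C(x,\nu)}(\lambda)=e^{-\lambda x}\langle Y_x,\psi\rangle$, whose a.s.\ convergence to $W(\lambda,z)$ is Proposition \ref{thrm_W}; in the critical case one works with the truncated martingale $V^x_{\widetilde C(z,\nu^*)}$ of Proposition \ref{lemma_martV} and lets the lower barrier recede. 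This device removes, by construction, exactly the obstacle you single out as "the main obstacle" — particles of $N_t$ filling the whole line — so no truncation in space is needed. The price is the stopping-line machinery (Section \ref{s:5.3} and Propositions \ref{thrm_W}, \ref{lemma_martV}), which you would replace by the a.s.\ convergence of the ordinary martingales $W_t(\lambda)$ and $\partial W_t(\lambda)$.

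Two remarks on closing your version. First, your fixed-time route can in fact be completed without any truncation: since $W_t(\lambda^*)\to 0$ forces $m_t+\nu^* t\to+\infty$ (as noted in Section \ref{s:5.3}), one has $\min_{u\in N_t}\bigl(X_u(t)+\nu t\bigr)\to\infty$ a.s.\ for $\nu\ge\nu^*$, so the uniform asymptotic applies to all of $N_t$ for large $t$, the sum is sandwiched between $(1\pm\epsilon)\beta e^{\gamma(\lambda)s}W_t(\lambda)$ (plus terms controlled by $W_t(\lambda^*)$ in the critical case), and the cumulative $O(z^2)$ error is at most $\sup_u z_u\cdot\sum_u z_u\to 0$. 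As written, however, you assert the need for a truncation and leave it unexecuted, so the decisive step of your proof is open. Second, both arguments must still pass from the identity at a single time ($s=0$ in the paper) to the full formula; the paper does this by invoking uniqueness of solutions of the F-KPP initial value problem, which you should state explicitly (or note that your argument yields the formula for every $s$ directly).
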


\section{Preliminaries}

\subsection{Properties of principal eigenvalue and eigenfunction}
In this section, we recall some properties of $\gamma(\lambda)$ and $\psi(x,\lambda)$ from \cite{RSYa}.
By \cite[Lemma 2.1]{RSYa}, the function $\gamma$ is analytic, strictly convex and  even on $\R$. There exists a unique $\lambda^*>0$ such that
	\begin{equation*}
	\nu^*=\frac{\gamma(\lambda^*)}{\lambda^*} = \min_{\lambda>0} \frac{\gamma(\lambda)}{\lambda}>0.
	\end{equation*}
	Furthermore
	\begin{equation}\label{gamma'}
	\lim_{\lambda\rightarrow -\infty}\gamma'(\lambda) = -\infty, \quad \lim_{\lambda\rightarrow +\infty}\gamma'(\lambda) = +\infty.
	\end{equation}	
By \cite[Lemma 2.2]{RSYa},
we have $\gamma'(\lambda^*) = \dfrac{\gamma(\lambda^*)}{\lambda^*}$,
\begin{equation}\label{lemma_com(2)}
\gamma'(\lambda) < \dfrac{\gamma(\lambda)}{\lambda},
 \quad \mbox{ on } (0, \lambda^*) \quad \mbox{ and }
\quad
\gamma'(\lambda) > \dfrac{\gamma(\lambda)}{\lambda},\quad \mbox{ on }
(\lambda^*, \infty).
\end{equation}
By \cite[Lemma 2.5]{RSYa}, we have $\psi(x,\cdot)\in C(\R)\cap C^1(\R\setminus\{0\})$ and $\psi_{\lambda}(x,\lambda)$ satisfies
\begin{align}\label{eigen_diff}
&\frac{1}{2} \psi_{\lambda xx}(x, \lambda) - \psi_x(x, \lambda) - \lambda \psi_{\lambda x}(x, \lambda) + (\frac{1}{2}\lambda^2 + m\mathbf g(x))\psi_{\lambda}(x, \lambda) + \lambda\psi (x, \lambda)\\
&= \gamma(\lambda) \psi_{\lambda} (x, \lambda) + \gamma'(\lambda) \psi(x, \lambda).\nonumber
\end{align}
Define
\begin{equation}\label{def_phi}
\phi(x,\lambda) := e^{-\lambda x} \psi(x,\lambda), \quad x\in\R.
\end{equation}
Then $\phi(x,\lambda)$ satisfies
\begin{equation}\label{phi-equ}
\frac{1}{2} \phi_{xx}(x,\lambda) + m\mathbf g(x) \phi(x,\lambda) = \gamma(\lambda) \phi(x,\lambda),
\end{equation}
and $\phi_{\lambda}(x,\lambda)$ satisfies
\begin{equation}\label{phi_lambda_eq}
\frac{1}{2} \phi_{\lambda xx}(x,\lambda) + m\mathbf g(x) \phi_{\lambda}(x,\lambda) = \gamma'(\lambda)\phi(x,\lambda) + \gamma(\lambda)\phi_{\lambda}(x,\lambda).
\end{equation}
Define
\begin{equation}\label{def-h}
h(x):= x - \frac{\psi_{\lambda}(x,\lambda)}{\psi(x,\lambda)},
\end{equation}
we also have
\begin{equation}\label{def-h2}
h(x)=-\frac{\phi_{\lambda}(x,\lambda)}{\phi(x,\lambda)}.
\end{equation}
It is easy to see that $h'$ is 1-periodic and continuous and \cite[Lemma 2.10]{RSYa} shows $h'$ is strictly positive.

\subsection{Measure change for Brownian motion}\label{ss:mcBB}
Martingale change of measures for Brownian motion will play important roles in our arguments.
In this section, we state the results of \cite{RSYa} about martingale change of measures.

Define
\begin{equation}\label{mart_eta-t}
\Xi_t(\lambda):= e^{-\gamma(\lambda)t - \lambda B_t + m\int_0^t\mathbf g(B_s)\mathrm{d}x} \psi(B_t,\lambda),
\end{equation}
then by \cite[Lemma 2.6]{RSYa}, $\{\Xi_t(\lambda), t\geq 0\}$  is a $\Pi_x$-martingale. Define a probability measure $\Pi^\lambda_x$ by
\begin{equation}\label{meas_Plambda}
\frac{\mathrm{d}\Pi_x^{\lambda}}{\mathrm{d}\Pi_x}\bigg{|} _{\mathcal{F}_t^B} =
\frac{\Xi_t(\lambda)}{\Xi_0(\lambda)},
\end{equation}
where $\{\mathcal{F}_t^B: t\geq 0\}$
is the natural filtration of Brownian motion.
We have shown in \cite{RSYa} that
$\{B_t, \Pi_x^{\lambda}\}$ is a diffusion with infinitesimal generator
\begin{equation}\label{Y_infin}
(\mathcal{A}f)(x) = \frac{1}{2} \frac{\partial^2 f(x)}{\partial x^2} + \left(\frac{\psi_x(x,\lambda)}{\psi(x,\lambda)}-\lambda\right)\frac{\partial f(x)}{\partial x}.	
\end{equation}
{\it In the remainder  of this paper, we always assume
that $\{Y_t, t\geq 0; \Pi_x^{\lambda}\}$ is a diffusion with infinitesimal generator \eqref{Y_infin}.}
It follows from \cite[Lemma 2.8]{RSYa} that, for any $x\in\R$,
\begin{equation}\label{lemma_slln}
\frac{Y_t}{t} \rightarrow -\gamma'(\lambda),\quad \Pi_x^{\lambda}\mbox{-a.s.}
\end{equation}

Define
\begin{equation}
M_t: = \gamma'(\lambda)t + h(Y_t) -h(Y_0), \quad t\ge 0.
\end{equation}
By  \cite[Lemma 2.12]{RSYa},   $\{M_t, t\geq 0; \Pi^\lambda_x\}$ is a martingale.
Moreover, there exist two constants $c_2>c_1>0$
such that the quadratic variation $\langle M \rangle_t$ satisfying
\begin{equation}\label{quadratic}
\langle M \rangle_t = \int_0^t \left( h'(Y_s) \right)^2 \mathrm{d}s \in [c_1t, c_2t].
\end{equation}

For any $x\in\R$, define an $\{\F_t^B \}$ stopping time
\begin{equation}\label{def_tauB}
\tau^x_{\lambda}:= \inf\left\{t\geq 0: \; h(B_t) \leq - x - \gamma'(\lambda) t  \right\}.
\end{equation}
Define
\begin{equation}\label{mart_Lambda}
\Lambda_t^{(x,\lambda)}:= e^{-\gamma(\lambda)t - \lambda B_t + m\int_0^t\mathbf g(B_s)\mathrm{d}s} \psi(B_t,\lambda)
\left( x+\gamma'(\lambda)t + h(B_t) \right)
\textbf{1}_{\{\tau^x_{\lambda} > t \}},
\end{equation}
then \cite[Lemma 2.11]{RSYa} shows that for any $x, y\in \R$ with $y>h^{-1}(-x)$, $\{\Lambda_t^{(x,\lambda)},t\geq 0\}$ is a $\Pi_y$-martingale. For $x, y\in \R$ with $y>h^{-1}(-x)$, define a new probability measure $\Pi^{(x, \lambda)}_y$ by
\begin{equation}\label{meas_change}
\frac{\mathrm{d}\Pi_y^{(x,\lambda)}}{\mathrm{d}\Pi_y}\bigg{|} _{\mathcal{F}_t^B} = \frac{\Lambda_t^{(x,\lambda)}}{\Lambda_0^{(x,\lambda)}}.
\end{equation}
By \cite[Section 2.2]{RSYa}, if $\{B_t, t\geq 0; \Pi_y \}$ is a standard Brownian motion starting at $y$, then $\left\{x +h(y)+ M_{T(t)}, t\geq 0; \Pi_y^{(x,\lambda)}\right\}$
is a standard Bessel-3 process
starting at $x +h(y)$, where $M_t = \gamma'(\lambda)t + h(B_t) -h(B_0)$ and $T(s) = \inf\left\{t>0:\langle M \rangle_t>s \right\} = \inf\left\{t>0: \int_0^t \left( h'(B_s) \right)^2 \mathrm{d}s>s \right\}$.

\section{Proof of Theorem \ref{thrm_asym_super}}
\begin{proof}[Proof of Theorem \ref{thrm_asym_super}]
	We fix $\nu> \nu^*$  in this proof and so $\lambda$ is fixed also.
	We will prove the theorem in five steps. In the first four steps, we assume the number of offspring is 2, that is $L=1$. In the last step, we prove the result for general $L$.
	
	\textbf{Step 1} Suppose $L=1$ and thus $m=1$. Let $\mathbf w(t,x) = 1 -\mathbf u(t,x)$, then $\mathbf w(t,x)$ satisfies
	\begin{equation}\label{KPPequation_w}
    \begin{cases}
	&\frac{\partial \mathbf w}{\partial t} = \frac{1}{2} \frac{\partial^2 \mathbf w}{\partial x^2} +
	\mathbf g\cdot
	(\mathbf w-\mathbf w^2),\\
	&\mathbf w(t+\frac{1}{\nu},x) = \mathbf w(t,x-1),
    \end{cases}
	\end{equation}
	for $t\geq 0$, $x\in\R$.  Define
	\begin{equation}
	\mathbf w(-t,x) = \mathbf w\left(-t+\frac{\lceil \nu t \rceil}{\nu}, x+\lceil \nu t \rceil\right),\quad \mbox{for } t>0,\, x\in\R.
	\end{equation}
	By the periodicity of $\mathbf w$, we get that $\mathbf w(t,x)$ satisfies \eqref{KPPequation_w}.
	Put
	\begin{equation}\label{v_w_relation}
	\widetilde {\mathbf w}(t,x):= \frac{e^{\lambda x-\gamma(\lambda)t}\mathbf w(t,x)} {\psi(x,\lambda)}.
	\end{equation}
	Recall that $\{Y_t, \Pi_x^{\lambda} \}$ is a diffusion with infinitesimal generator \eqref{Y_infin}.
	Define
	\begin{equation}\label{def-Y}
	f(Y_t) = \widetilde {\mathbf w}(-t,Y_t) e^{-\int_0^t \mathbf g(Y_s)\mathbf w(-s,Y_s)\mathrm{d}s},\quad t\geq 0.
	\end{equation}
	In this step we prove that
	$\{(f(Y_t))_{t\geq 0}, \Pi_x^{\lambda} \}$
	is a positive martingale.
	
	By the Feynman-Kac formula, we have
	\begin{equation}\label{w_feynman}
	\mathbf w(T,x) = \Pi_x \left[ \mathbf w(T-t,B_t) e^{\int_0^t \mathbf g(B_s)(1-\mathbf w(T-s,B_s))\mathrm{d}s} \right], \quad \mbox{for $T\in\R$, $t>0$.}
	\end{equation}
	Recall that,  since $m=1$,
	\begin{equation}
	\frac{\mathrm{d}\Pi_x^{\lambda}}{\mathrm{d}\Pi_x}\bigg{|}_{\F_t^B} =
	\frac{\Xi_t(\lambda)}{\Xi_0(\lambda)}
	= \frac{e^{-\gamma(\lambda)t-\lambda B_t +\int_0^t\mathbf g(B_s)\mathrm{d}s} \psi(B_t,\lambda) }{e^{-\lambda x} \psi(x,\lambda)}.
	\end{equation}
	Therefore
	\begin{align}
	\mathbf w(T,x) &=
	\Pi_x^{\lambda} \left[\frac{\Xi_0(\lambda)}{\Xi_t(\lambda)} \mathbf w(T-t,B_t) e^{\int_0^t \mathbf g(B_s)(1-\mathbf w(T-s,B_s))\mathrm{d}s} \right]\\
	&= \Pi_x^{\lambda} \left[ e^{-\lambda x} \psi(x,\lambda) \frac{e^{\lambda B_t + \gamma(\lambda)t - \int_0^t \mathbf g(B_s)\mathrm{d}s} \mathbf w(T-t,B_t)}{\psi(B_t,\lambda)} e^{\int_0^t \mathbf g(B_s)(1-\mathbf w(T-s,B_s))\mathrm{d}s} \right]\\
	&= \Pi_x^{\lambda} \left[ e^{-\lambda x +\gamma(\lambda)T } \psi(x,\lambda)   \frac{e^{\lambda B_t-\gamma(\lambda)(T-t)} \mathbf w(T-t,B_t)}{\psi(B_t,\lambda)} e^{-\int_0^t \mathbf g(B_s)\mathbf w(T-s,B_s)\mathrm{d}s} \right].
	\end{align}
	Thus we have
	\begin{equation}
	\widetilde {\mathbf w}(T,x) = \Pi_x^{\lambda} \left[\widetilde {\mathbf w}(T-t,B_t) e^{-\int_0^t \mathbf g(B_s)\mathbf w(T-s,B_s)\mathrm{d}s} \right].
	\end{equation}
	Note that both $\{B_t, \Pi_x^{\lambda}\}$ and $\{Y_t, \Pi_x^{\lambda}\}$ are diffusions with infinitesimal generator $\mathcal{A}$.
	Thus
	\begin{equation}\label{v_represent}
	\widetilde {\mathbf w}(T,x) = \Pi_x^{\lambda} \left[\widetilde {\mathbf w}(T-t,Y_t) e^{-\int_0^t \mathbf g(Y_s)\mathbf w(T-s,Y_s)\mathrm{d}s} \right].
	\end{equation}
	It follows from $\nu=\frac{\gamma(\lambda)}{\lambda}$ that
	\begin{equation}\label{v_periodic}
	\widetilde {\mathbf w}(t+\frac{1}{\nu},x+1) = \frac{e^{\lambda (x+1)-\gamma(\lambda)(t+\frac{1}{\nu})}\mathbf w(t+\frac{1}{\nu},x+1)} {\psi(x+1,\lambda)} = \frac{e^{\lambda x-\gamma(\lambda)t}\mathbf w(t,x)} {\psi(x,\lambda)} = \widetilde {\mathbf w}(t,x).
	\end{equation}
	For $0<s<t$, we have
	\begin{align}
	\Pi_{x}^{\lambda} \left[f(Y_t)|\F_s \right]&=
	\Pi_{x}^{\lambda} \left[\widetilde {\mathbf w}(-t,Y_t) e^{-\int_0^t \mathbf g(Y_r)\mathbf w(-r,Y_r)\mathrm{d}r}|\F_s \right]  \\
	&=e^{-\int_0^s \mathbf g(Y_r)\mathbf w(-r,Y_r)\mathrm{d}r}
	\Pi_{Y_s}^{\lambda} \left[\widetilde {\mathbf w}(-t,Y_{t-s}) e^{-\int_0^{t-s} \mathbf g(Y_r)\mathbf w(-(r+s),Y_r)\mathrm{d}r} \right]\\
	&=e^{-\int_0^s \mathbf g(Y_r)\mathbf w(-r,Y_r)\mathrm{d}r}
	\Pi_{Y_s}^{\lambda} \left[\widetilde {\mathbf w}(-s-(t-s),Y_{t-s}) e^{-\int_0^{t-s} \mathbf g(Y_r)\mathbf w(-s-r,Y_r)\mathrm{d}r}
	\right]\\
	&=e^{-\int_0^s \mathbf g(Y_r)\mathbf w(-r,Y_r)\mathrm{d}r}  \widetilde {\mathbf w}(-s,Y_s)=f(Y_s),
	\end{align}
	where the penultimate equality
	follows from \eqref{v_represent} with $T=-s$.
	Hence $\{(f(Y_t))_{t\geq 0}, \Pi_x^{\lambda} \}$ is a positive martingale.
	
	\textbf{Step 2} Suppose $L=1$.
It follows from  \eqref{lemma_slln} and \eqref{lemma_com(2)},
that
	$\lim\limits_{s\rightarrow\infty}\dfrac{Y_s+\nu s}{s} = -\gamma'(\lambda) + \frac{\gamma(\lambda)}{\lambda}>0$. Thus $\lim\limits_{s\rightarrow\infty} (Y_s+\nu s) = \infty$.
	Since a positive martingale has a non-negative finite limit, taking logarithms of \eqref{def-Y} and dividing by $Y_t+\nu t$ gives
	\begin{equation}\label{log-v}
	\limsup_{t\rightarrow\infty} \left\{\frac{\ln \widetilde {\mathbf w}(-t,Y_t)}{Y_t+\nu t} - \frac{1}{Y_t+\nu t} \int_0^t \mathbf g(Y_s)\mathbf w(-s,Y_s)\mathrm{d}s \right\}  \leq 0 \quad \Pi_x^{\lambda}\mbox{-a.s.}
	\end{equation}
	Put $\|\mathbf g\|_\infty=\max_{x\in[0,1]}\mathbf  g(x)$.
	Taking $T=t$ in  \eqref{w_feynman}, we get
	\begin{align}
	\mathbf w(t,x) &= \Pi_x \left[ \mathbf w(0,B_t) e^{\int_0^t \mathbf g(B_s)(1-\mathbf w(t-s,B_s))\mathrm{d}s} \right] \leq \Pi_x \left[ \mathbf w(0,B_t) e^{\|\mathbf g\|_\infty t} \right]\\
	&\leq e^{\|\mathbf g\|_\infty/\nu} \Pi_0 \left[ \mathbf w(0,B_t+x) \right],
	\quad t\in\left[0,\frac{1}{\nu}\right].
	\end{align}
	Since $\mathbf w(0,x)\rightarrow 0$ as $x\rightarrow\infty$, we have, for any $\epsilon>0$, $\mathbf w(0,x/2)\leq \epsilon/2$ when $x$ is large enough.
	Since $\Pi_0(B_t+x\leq x/2) = \Pi_0(B_t\leq -x/2)$, we have $\Pi_0(B_t+x\leq x/2)\leq \epsilon/2$ for $x$ large enough.  Therefore, for $x$ large enough,
	\begin{equation}
	\mathbf w(t,x) \leq e^{\|\mathbf g\|_\infty/\nu} \Pi_0(\mathbf w(0,B_t+x)) \leq e^{\|\mathbf g\|_\infty/\nu} \epsilon,\quad t\in\left[0,\frac{1}{\nu}\right].
	\end{equation}
	This implies that $\mathbf w(t,x)\rightarrow 0$ as $x\rightarrow\infty$ uniformly in $t\in\left[0,\frac{1}{\nu}\right]$. So
	\begin{equation}
	\frac{1}{t} \int_0^t \mathbf g(Y_s)\mathbf w(-s,Y_s)\mathrm{d}s \rightarrow 0, \quad \Pi_x^{\lambda}\mbox{-a.s.}
	\end{equation}
	Therefore, by \eqref{log-v},
	\begin{equation}
	\limsup_{t\rightarrow\infty} \left\{\frac{\ln \widetilde {\mathbf w}(-t,Y_t)}{Y_t+\nu t}\right\} \leq 0,  \quad \Pi_x^{\lambda}\mbox{-a.s.}
	\end{equation}
	Hence by \eqref{v_w_relation}, we have
	\begin{equation}\label{limsup-lnw}
	\limsup_{t\rightarrow\infty} \left\{\frac{\ln \mathbf w(-t,Y_t)}{Y_t+\nu t}\right\} \leq -\lambda, \quad \Pi_x^{\lambda}\mbox{-a.s.}
	\end{equation}
	This implies that, for any $\delta>0$ and $\Pi_x^{\lambda}$-a.s all $\omega$, there exists $C(\omega)>0$ such that
	\begin{equation}\label{w-lambda-delta}
	\mathbf w(-t,Y_t(\omega)) \leq C(\omega)e^{-(\lambda-\delta)(Y_t(\omega)+\nu t)}.
	\end{equation}
	Therefore,
	$$\int_0^\infty \mathbf g(Y_s)\mathbf w(-s,Y_s)\mathrm{d}s<+\infty\quad
	\Pi_x^{\lambda}\mbox{-a.s.}
	$$
	Consequently,
	by \eqref{def-Y},
	$\widetilde {\mathbf w}(-t,Y_t)$ converges $\Pi_x^{\lambda}$-almost surely to some limit, say $\xi_x$.

	Next we  use a coupling method to prove that $\xi_x$ is a constant $\Pi_x^{\lambda}$-almost surely.
	Consider $\{(Y_t^1,Y_t^2), t\geq 0; \widetilde{\Pi}_{(x,y)}^{\lambda}\}$ with
	$\{Y_t^1, t\geq 0\}$ and $\{Y_t^2, t\geq 0\}$ being independent, and
	\begin{equation}
	\{Y_t^1, t\geq 0; \widetilde{\Pi}_{(x,y)}^{\lambda}\} \overset{d}{=} \{Y_t, t\geq 0;\Pi_{x}^{\lambda}\}, \quad
	\{Y_t^2,t\geq 0; \widetilde{\Pi}_{(x,y)}^{\lambda}\} \overset{d}{=} \{Y_t,t\geq 0;\Pi_{y}^{\lambda}\}.
	\end{equation}
	Define
	\begin{equation}
	M_t^i = h(Y_t^i) + \gamma'(\lambda)t - h(Y_0^i), \quad i=1,2.
	\end{equation}
	Then $\{M_t^1, t\geq 0; \widetilde{\Pi}_{(x,y)}^{\lambda}\}$ and $\{M_t^2, t\geq 0; \widetilde{\Pi}_{(x,y)}^{\lambda}\}$ are independent martingales. Hence
	\begin{align}
	\langle M^1-M^2 \rangle_t = \langle M^1 \rangle_t + \langle M^2 \rangle_t \rightarrow \infty \quad \mbox{as } t\rightarrow\infty.
	\end{align}
	By the Dambis-Dubins-Schwarz theorem, we get
	\begin{align}\label{Mt_delta}
	\liminf_{t\rightarrow\infty} (M_t^1 - M_t^2) = -\infty \mbox{ and }
	\limsup_{t\rightarrow\infty} (M_t^1 - M_t^2) = +\infty,
	\quad \widetilde{\Pi}_{(x,y)}^{\lambda}\mbox{-a.s.}
	\end{align}
	Since $h(x) = x - \frac{\psi_{\lambda}(x,\lambda)}{\psi(x,\lambda)}$, we have
	\begin{align}
	Y_t^1 - Y_t^2 = M_t^1 - M_t^2 + \frac{\psi_{\lambda}(Y_t^1,\lambda)}{\psi(Y_t^1,\lambda)} - \frac{\psi_{\lambda}(Y_t^2,\lambda)}{\psi(Y_t^2,\lambda)}+ h(Y_0^1) - h(Y_0^2).
	\end{align}
	Combining \eqref{Mt_delta} with the boundness of  $\frac{\psi_{\lambda}(x,\lambda)}{\psi(x,\lambda)}$, we get
	\begin{align}
	\liminf_{t\rightarrow\infty} (Y_t^1 - Y_t^2) = -\infty \mbox{ and }
	\limsup_{t\rightarrow\infty} (Y_t^1 - Y_t^2) = +\infty,
	\quad \widetilde{\Pi}_{(x,y)}^{\lambda}\mbox{-a.s.}
	\end{align}
	Define $E:=\{\omega: \exists t_n=t_n(\omega)\to\infty \mbox{ with } Y_{t_n}^1 = Y_{t_n}^2 \mbox{ for all } n\}$. Then it follows from the display above that
	\begin{equation}\label{Yt_delta}
	\widetilde{\Pi}_{(x,y)}^{\lambda} (E)=1.
	\end{equation}
	If we use $\tilde{\xi}_x$ and $\tilde{\xi}_y$ to denote the limits of $\widetilde {\mathbf w}(-t,Y_t^1)$ and $\widetilde {\mathbf w}(-t,Y_t^2)$ under $\widetilde{\Pi}_{(x,y)}^{\lambda}$ respectively,
	then \eqref{Yt_delta} implies $\widetilde{\Pi}_{(x,y)}^{\lambda}(\tilde{\xi}_x = \tilde{\xi}_y) = 1$.
	Since  $\tilde{\xi}_x$ and $\tilde{\xi}_y$ are independent, there is a constant $\beta\geq 0$ such that  $\tilde{\xi}_x = \tilde{\xi}_y=\beta$.
	Since  $\tilde{\xi}_x \overset{d}{=} \xi_x$, we have for any $x\in\R$, $\xi_x = \beta$, which says
	\begin{equation}
	\widetilde {\mathbf w}(-t,Y_t) \rightarrow \beta,\quad \Pi_x^{\lambda}\mbox{-a.s.}
	\end{equation}
	
	Now we consider
	\begin{equation}
	f(Y_t,T) = \widetilde {\mathbf w}(T-t,Y_t) e^{-\int_0^t \mathbf g(Y_s)\mathbf w(T-s,Y_s)\mathrm{d}s},\quad t\geq 0.
	\end{equation}
	The proof in Step 1 also works if $f(Y_t)$ is replaced by $f(Y_t,T)$.
	Then using the same argument as above, there exists another constant $\beta_T$ such that
	\begin{equation}
	\widetilde {\mathbf w}(T-t,Y_t) \rightarrow \beta_T,\quad \Pi_x^{\lambda}\mbox{-a.s.}
	\end{equation}
	We also have
	\begin{align}
	\liminf_{t\rightarrow\infty} \left(Y_{t+T}^1 - Y_t^2\right) = -\infty \mbox{ and }
	\limsup_{t\rightarrow\infty} \left(Y_{t+T}^1 - Y_t^2\right) = +\infty.
	\end{align}
	Hence, if we put $E_T:=\left\{\omega: \exists t_n=t_n(\omega)\to\infty \mbox{ with } Y_{t_n+T}^1 = Y_{t_n}^2 \mbox{ for all } n\right\}$, then
	\begin{equation}\label{ET}
	\widetilde{\Pi}_{(x,y)}^{\lambda} (E_T)=1.
	\end{equation}
	Notice that $Y_{t_n+T}^1 = Y_{t_n}^2$ implies
	$\widetilde {\mathbf w}\left(T-(t+T),Y_{t+T}^1\right) = \widetilde {\mathbf w}(-t,Y_t^2)$.
	Combining this with \eqref{ET}, we have
	$\beta_T = \beta$,
	that is, for any $x,T\in\R$, it holds that
	\begin{equation}\label{v_Yt_converge}
	\widetilde {\mathbf w}(T-t,Y_t) \rightarrow \beta,\quad \Pi_x^{\lambda}\mbox{-a.s.}
	\end{equation}
	
	\textbf{Step 3}
	First, we prove that $\widetilde {\mathbf w}$ is bounded.
	Using the same notation in Step 2, we also have for any $k\in \mathbb{Z}$
	\begin{equation}
	\left\langle M^1_{\cdot+\frac{k}{\nu}}-M^2_{\cdot} \right\rangle_t = \left\langle M^1_{\cdot+\frac{k}{\nu}} \right\rangle_t + \left\langle M^2_{\cdot} \right\rangle_t \rightarrow \infty \quad \mbox{as } t\rightarrow\infty.
	\end{equation}
	By the Dambis-Dubins-Schwarz theorem, it holds that
	\begin{equation}
	\liminf_{t\rightarrow\infty} \left(M_{t+\frac{k}{\nu}}^1 - M_t^2\right) = -\infty \mbox{ and }
	\limsup_{t\rightarrow\infty} \left(M_{t+\frac{k}{\nu}}^1 - M_t^2\right) = +\infty,
	\end{equation}
	so the same conclusion holds for $Y_{t+\frac{k}{\nu}}^1 - Y_t^2$.
	This implies, with $E_k:=\{\omega: \exists t_n=t_n(\omega)\to\infty \mbox{ with } Y_{t_n+\frac{k}{\nu}}^1+k = Y_{t_n}^2 \mbox{ for all } n\}$,
	\begin{equation}\label{Yt_delta_k}
	\widetilde{\Pi}_{(x,y)}^{\lambda} (E_k)=1.
	\end{equation}
	Consider
	$$
	\Omega_0 = \bigcap_{k\in\mathbb{Z}} E_k
	\bigcap\left\{\lim_{t\rightarrow\infty}\widetilde {\mathbf w}(-t,Y_t^i) = \beta,\, \lim_{t\rightarrow\infty}\frac{Y_t^i}{t} = -\gamma'(\lambda), \mbox{ and } Y_t^i
	{\mbox{ is continuous for } i=1,2} \right\}.
	$$
	By \eqref{lemma_slln}, \eqref{v_Yt_converge} and \eqref{Yt_delta_k},
    we get $\widetilde{\Pi}_{(x,y)}^{\lambda}(\Omega_0) = 1$. By \eqref{v_periodic}, it suffices to show that $\widetilde {\mathbf w}(t,x)$ is bounded in $[0,\frac{1}{\nu}] \times \R$. Fix $\omega\in\Omega_0$, consider the continuous curves
	\begin{equation}
	\mathcal{L}_k^i = \left\{\left(-t+\frac{k}{\nu}, Y_t^i(\omega)+k\right): t\in\left(\frac{k-1}{\nu},\frac{k}{\nu}\right) \right\}, \quad k\in\N,\, i=1,2.
	\end{equation}
	In Step 2 we have shown $\lim_{t\rightarrow\infty} (Y_t^i(\omega)+\nu t) = \infty$ for $i=1,2$,
	thus for any $x_0$ large enough,  there exist $k=k(\omega)>j=j(\omega)\in N$ such that
	\begin{equation}
	x_0 \leq Y_t^1(\omega)+k,\mbox{ for any } t\in \left[\frac{k-1}{\nu},\frac{k}{\nu}\right], \mbox{ and }
	x_0 \geq Y_t^2(\omega)+j,\mbox{ for any } t\in \left[\frac{j-1}{\nu},\frac{j}{\nu}\right].
	\end{equation}
	Define
	\begin{equation}
	\widetilde{\tau}(\omega) = \inf\left\{t\geq j/\nu:  Y_{t+\frac{k-j}{\nu}}^1(\omega)+k = Y_t^2(\omega)+j  \right\}.
	\end{equation}
	By the definition of $\Omega_0$, we know $\widetilde{\tau}(\omega)<+\infty$.
	Let $\widetilde{\mathcal{L}}$ denote the line segment $\{\frac{1}{\nu}\} \times [Y_{\frac{j-1}{\nu}}^2(\omega)+j, Y_{\frac{k-1}{\nu}}^1(\omega)+k]$, and define the curves
	\begin{align}
	&\widetilde{\mathcal{L}}_k^1 = \left\{\left(-t+\frac{k}{\nu}, Y_t^1(\omega)+k\right): t\in \left[\frac{k-1}{\nu}, \widetilde{\tau}(\omega)+\frac{k-j}{\nu}\right] \right\},\\
	&\widetilde{\mathcal{L}}_j^2 = \left\{\left(-t+\frac{j}{\nu}, Y_t^2(\omega)+j\right): t\in \left[\frac{j-1}{\nu}, \widetilde{\tau}(\omega)\right] \right\}.
	\end{align}
	By the definition of $\widetilde{\tau}(\omega)$, we have
	\begin{equation}
	\left(-\left(\widetilde{\tau}(\omega)+\frac{k-j}{\nu}\right) + \frac{k}{\nu}, \, Y_{\widetilde{\tau}(\omega)+\frac{k-j}{\nu}}^1(\omega)+k\right) = \left(-\widetilde{\tau}(\omega)+\frac{j}{\nu}, \, Y_{\widetilde{\tau}(\omega)}^2(\omega)+j\right).
	\end{equation}
	Combining \eqref{Y_infin}, \eqref{v_represent} and the Feynman-Kac formula, we have
	\begin{equation}\label{v_equation}
	\frac{\partial \widetilde {\mathbf w}}{\partial t} = \frac{1}{2} \frac{\partial^2 \widetilde {\mathbf w}}{\partial x^2} + (\frac{\psi_x(x,\lambda)}{\psi(x,\lambda)} - \lambda) \frac{\partial \widetilde {\mathbf w}}{\partial x} -
    \mathbf g \mathbf w \widetilde {\mathbf w}.
	\end{equation}
		\begin{figure*}
		\centering
				\includegraphics[width=12cm]{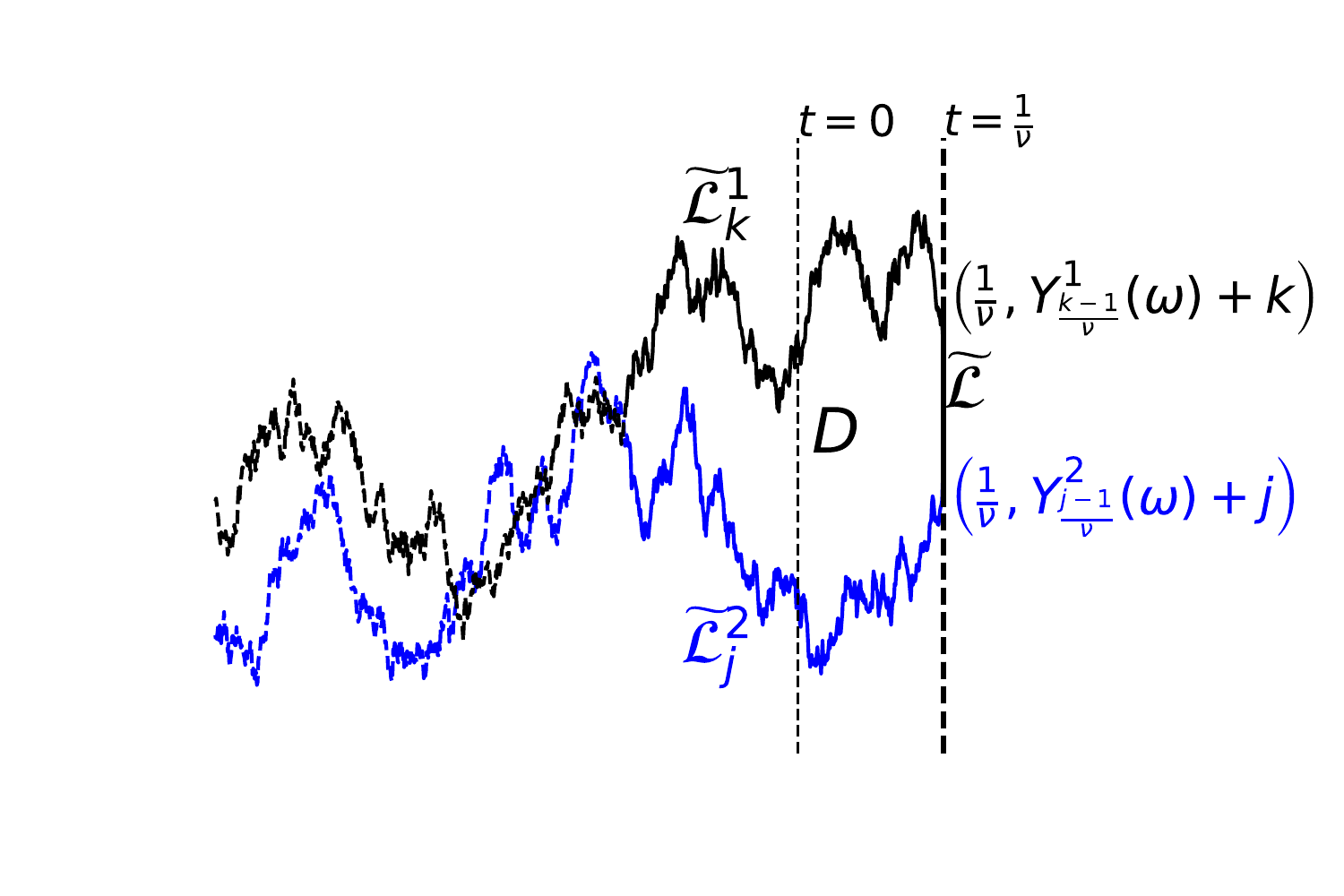}
		\caption{
Bounded domain
$D$ with boundary $\widetilde{\mathcal{L}}\cup\widetilde{\mathcal{L}}_k^1\cup\widetilde{\mathcal{L}}_j^2$}
	\end{figure*}
    	Let $D$ denote the bounded domain with boundary $\widetilde{\mathcal{L}}\cup\widetilde{\mathcal{L}}_k^1\cup\widetilde{\mathcal{L}}_j^2$
	(see Figure 1).
	By the maximum principle, we have $\widetilde {\mathbf w}$ attains its maximum in $\overline D$ on $\widetilde{\mathcal{L}}_k^1\cup\widetilde{\mathcal{L}}_j^2$, where $\overline D$ is the closure of $D$.
	Hence the maximum of $\widetilde {\mathbf w}$ on $\overline D$
	is less than or equal to $K:=\max_{t\geq 0} \{\widetilde {\mathbf w}(-t,Y_t^1(\omega)), \widetilde {\mathbf w}(-t,Y_t^2(\omega))\}$. By the continuity of $\widetilde {\mathbf w}$ and
	\begin{equation}
	\lim_{t\rightarrow\infty} \widetilde {\mathbf w}(-t,Y_t^i(\omega)) = \beta,  \quad i=1,2,
	\end{equation}
	we get $K<\infty$. Notice that for any
	$t_0\in [0,\frac{1}{\nu}]$ and $x_0$ large enough, $(t_0,x_0) \in \overline D$
	and so $\widetilde {\mathbf w}(t_0,x_0) \leq K$.
	Combining this with $\lim\limits_{x\rightarrow-\infty} \widetilde {\mathbf w}(t,x) = 0$ and the continuity of $\widetilde {\mathbf w}$, we get $\widetilde {\mathbf w}(t,x)$ is bounded in $[0,\frac{1}{\nu}]\times \R$, hence bounded in $\R\times \R$.
	
	Since $\widetilde {\mathbf w}(t,x)$ is bounded, by \eqref{v_represent}, \eqref{v_Yt_converge} and the dominated convergence theorem,
	\begin{equation}
	\widetilde {\mathbf w}(t,x) = \Pi_x^{\lambda} \left[\beta e^{-\int_0^{\infty} \mathbf g(Y_s)\mathbf w(t-s,Y_s)\mathrm{d}s} \right]\leq \beta.
	\end{equation}
	Since  $\widetilde {\mathbf w}(t,x)>0$, we have $\beta>0$.
	
	Next we show that
	\begin{equation}\label{unif-bound-tilde-w}
	\widetilde {\mathbf w}(t,x) \rightarrow \beta \quad
	\mbox{ as } x\rightarrow\infty\mbox{ uniformly in }t\in \left[0,\frac{1}{\nu}\right].
	\end{equation}
	It follows from  \eqref{v_w_relation} that
	\begin{equation}\label{w_estimate}
	\begin{split}
	\mathbf w(t-s, Y_s) &\leq e^{-\lambda Y_s + \gamma(\lambda)(t-s)} \psi(Y_t,\lambda) \widetilde {\mathbf w}(t-s,Y_s)\\
	&\leq  \beta \max_{z\in[0,1]}\psi(z,\lambda) e^{\gamma(\lambda)\frac{1}{\nu}} e^{-\lambda(Y_s+\nu s)}  \\
	&\leq C_1 e^{-\lambda(Y_s+\nu s)},
	\quad \forall t\in \left[0,\frac{1}{\nu}\right],
	\end{split}
	\end{equation}
	where $C_1$ is a constant depends only  on $\lambda$. Moreover, we have
	\begin{equation}
	\mathbf g(Y_s) \mathbf w(t-s, Y_s) \leq C e^{-\lambda(Y_s+\nu s)}, \quad \forall t\in \left[0,\frac{1}{\nu}\right],
	\end{equation}
	where $C = C_1 \|\mathbf g\|_{\infty}$.
	For any $y\in [0,1]$ and $n\in\N$, let
	\begin{equation}
	f_n(y) = \Pi_y^{\lambda} \left[ e^{-\int_0^{\infty} Ce^{-\lambda(Y_s+n+\nu s)}\mathrm{d}s} \right].
	\end{equation}
	Then by the periodicity of $Y_t$, we have
	\begin{equation}
	\widetilde {\mathbf w}(t,x) \geq \beta \Pi_x^{\lambda} \left[ e^{-\int_0^{\infty} Ce^{-\lambda(Y_s+\nu s)} \mathrm{d}s} \right] = \beta \Pi_{\{x\}}^{\lambda} \left[ e^{-\int_0^{\infty} Ce^{-\lambda(Y_s+\lfloor x \rfloor +\nu s)} \mathrm{d}s} \right] =
	\beta f_{\lfloor x \rfloor }(\{x\}).
	\end{equation}
	Since  $\int_0^{\infty} e^{-\lambda(Y_s+\nu s)} \mathrm{d}s < +\infty$ $\Pi_x^{\lambda}$-a.s., by the dominated convergence theorem, we get
	\begin{align}
	\lim_{n\rightarrow\infty} f_n(y) &= \Pi_y^{\lambda} \left[\lim_{n\rightarrow\infty} e^{-\int_0^{\infty} Ce^{-\lambda(Y_s+n+\nu s)}\mathrm{d}s} \right]
	= \Pi_y^{\lambda} \left[ e^{- \lim_{n\rightarrow\infty} \int_0^{\infty} Ce^{- \lambda(Y_s+n+\nu s)}\mathrm{d}s} \right]\\
	&= \Pi_y^{\lambda} \left[ e^{- \int_0^{\infty} \lim_{n\rightarrow\infty} Ce^{- \lambda(Y_s+n+\nu s)}\mathrm{d}s } \right] = 1.
	\end{align}
	Notice that $f_n(y) \leq f_{n+1}(y)$, using Dini's theorem we get $f_n(y) \rightarrow 1$ uniformly for $y\in[0,1]$. Combining this with $\beta f_n(y) \leq \widetilde {\mathbf w}(t,y+n) \leq \beta$, we have \eqref{unif-bound-tilde-w}.
	
	By \eqref{v_w_relation}, we get
	\begin{equation}\label{u_uniform_conver}
	\frac{e^{\lambda x-\gamma(\lambda)t}\mathbf w(t,x)} {\psi(x,\lambda)} \rightarrow \beta \quad
	\mbox{ as } x\rightarrow\infty \mbox{ uniformly in } t\in \left[0,\frac{1}{\nu}\right].
	\end{equation}

	\textbf{Step 4} Suppose $L=1$.
	We  will show that
	\begin{equation}\label{e:step3}
	w\left(\frac{y-x}{\nu}, y\right) \sim \beta e^{-\lambda x}\psi(y,\lambda) \text{ as } x\rightarrow +\infty \text{ uniformly in } y\in [0,1].
	\end{equation}
	It is equivalent to show that, for any $\epsilon>0$, there exists $x_0$ such that for any $x>x_0$,
	\begin{equation}
	\sup_{y\in [0,1]} \bigg{|}\frac{e^{\lambda x}\mathbf w\left(\frac{y-x}{\nu}, y\right)}{\psi(y,\lambda)} - \beta\bigg{|} < \epsilon.
	\end{equation}
	By \eqref{u_uniform_conver}, for any $\epsilon>0$, there exists $z_0$ such that for any $z>z_0$,
	\begin{equation}
	\sup_{t\in [0,\frac{1}{v}]} \bigg{|}\frac{e^{\lambda z-\gamma(\lambda)t}\mathbf w\left(t, z\right)}{\psi(z,\lambda)} - \beta\bigg{|} < \epsilon.
	\end{equation}
	Put
	\begin{equation}
	t = \frac{\{y-x \}}{\nu},  \quad z=-\lfloor y-x \rfloor  + y,
	\end{equation}
	where $\{x\}:=x-\lfloor x \rfloor$. For any  $x>z_0+1$, we have
	\begin{equation}
	\sup_{\frac{\{y-x \}}{\nu}\in [0,\frac{1}{\nu}]} \bigg{|}\frac{e^{\lambda (-\lfloor y-x \rfloor +y)-\gamma(\lambda)\frac{\{y-x \}}{\nu}}\mathbf w\left(\frac{\{y-x \}}{\nu}, -\lfloor y-x \rfloor+y\right)} {\psi(-\lfloor y-x \rfloor+y,\lambda)} - \beta\bigg{|} < \epsilon,
	\end{equation}
	that is
	\begin{equation}
	\sup_{y\in [0,1]} \bigg{|}\frac{e^{\lambda x}\mathbf w\left(\frac{y-x}{\nu}, y\right)}{\psi(y,\lambda)} - \beta\bigg{|} < \epsilon,
	\end{equation}
	where we used the periodicity of $\psi$ and the fact that
	\begin{equation}
	\lambda (-\lfloor y-x \rfloor +y) - \gamma(\lambda)\frac{\{y-x \}}{\nu} = \lambda (-\lfloor y-x \rfloor +y) - \lambda \{y-x \} = \lambda (-(y-x)+y) = \lambda x.
	\end{equation}
	Thus \eqref{e:step3} holds.

	\textbf{Step 5} For the general branching mechanism, $\mathbf w=1-\mathbf u$ satisfies
	\begin{equation}
	\frac{\partial \mathbf w}{\partial t} = \frac{1}{2} \frac{\partial^2 \mathbf w}{\partial x^2} +
    \mathbf g\cdot(1-\mathbf w-\mathbf f(1-\mathbf w)),
	\end{equation}
	where $\mathbf f(s) = \mathbf E s^{1+L} = \sum\limits_{k=0}^{\infty} p_k s^{1+k}$ and $m = \sum\limits_{k=0}^{\infty} k p_k$.
	By the Feynman-Kac formula,
	\begin{equation}
	\mathbf w(T,x) = \Pi_x \left[ \mathbf w(T-t,B_t) e^{\int_0^t \mathbf g(B_s) \frac{1-\mathbf w-\mathbf f(1-\mathbf w)}{\mathbf w}(T-s,B_s) \mathrm{d}s} \right].
	\end{equation}
	By the definition in \eqref{meas_Plambda},
	\begin{align}
	\mathbf w(T,x) &= \Pi_x^{\lambda}
	\left[\frac{\Xi_0(\lambda)}{\Xi_t(\lambda)} \mathbf w(T-t,B_t) e^{\int_0^t \mathbf g(B_s)\frac{1-\mathbf w-\mathbf f(1-\mathbf w)}{\mathbf w}(T-s,B_s)\mathrm{d}s} \right]\\
	&= \Pi_x^{\lambda} \left[ e^{-\lambda x +\gamma(\lambda)T } \psi(x,\lambda)   \frac{e^{\lambda B_t-\gamma(\lambda)(T-t)} \mathbf w(T-t,B_t)}{\psi(B_t,\lambda)} e^{\int_0^t \mathbf g(B_s)(\frac{1-\mathbf w-\mathbf f(1-\mathbf w)}{\mathbf w}(T-s,B_s)-m)\mathrm{d}s} \right].
	\end{align}
	Put
	\begin{equation}
	A(w) =\left\{\begin{array}{ll}  m - \frac{1-w-\mathbf f(1- w)}{w}, &\quad w\in(0,1],\\
	0, &\quad w=0.\end{array}\right.
	\end{equation}
	Since $(B_t, \Pi^\lambda_x)$ and $(Y_t, \Pi^\lambda_x)$ have the same law, by \eqref{v_w_relation}, we have
	\begin{align}
	\widetilde {\mathbf w}(T,x) =  \Pi_x^{\lambda} \left[\widetilde {\mathbf w}(T-t,Y_t) e^{-\int_0^t \mathbf g(Y_s)A(\mathbf w)(T-s,Y_s)\mathrm{d}s} \right].
	\end{align}
	It follows from \cite[Corollary 2, p.26]{Athreya} that $A(\cdot)$ is non-negative and non-decreasing.
	Moreover, for any $r,c\in (0,1)$,
	\begin{equation}\label{iff-llogl}
	\sum_{n=0}^{\infty} A(cr^n) <+\infty \quad \mbox{ iff } \quad \mathbf E(L\log^+L) < +\infty.
	\end{equation}
	Using the argument of Step 1, we get that
	$\{\widetilde {\mathbf w}(-t,Y_t) e^{-\int_0^t \mathbf g(Y_s)A(\mathbf w)(-s,Y_s)\mathrm{d}s}, \Pi_x^{\lambda}\}_{t\geq 0}$ is a non-negative martingale.
	Using the argument at the beginning of Step 2,  we get
	$\mathbf w(t,x) \rightarrow 0$ as $x\rightarrow\infty$ uniformly in $t\in [0,\frac{1}{\nu}]$,
	$\mathbf w(-t,Y_t)$ decays exponentially with rate at least $-\lambda$.
	So $\widetilde {\mathbf w}(-t,Y_t)$ converges $\Pi_x^{\lambda}$-almost surely, if we can show that
	\begin{align}
	\int_0^{\infty}  \mathbf g(Y_s)A(\mathbf w)(-s,Y_s)\mathrm{d}s < +\infty, \quad \mbox{ $\Pi_x^{\lambda}$-a.s.}
	\end{align}
	By \eqref{w-lambda-delta} and \eqref{iff-llogl},
	if $\mathbf E(L\log^+L) < \infty$, we have
	\begin{align}
	\int_0^{\infty}  \mathbf g(Y_s)A(\mathbf w)(-s,Y_s)\mathrm{d}s &\leq \|\mathbf g\|_\infty \int_0^{\infty} A(C e^{-(\lambda-\delta)(Y_s+\nu s) }) \mathrm{d}s \\
	&\leq \|\mathbf g\|_\infty \int_0^{\infty} A(C (e^{-(\lambda-\delta)(\nu-\gamma'(\lambda))s}))\mathrm{d}s\\
	&\leq \|\mathbf g\|_\infty \sum_{n=0}^{\infty} A(C (e^{-(\lambda-\delta)(\nu-\gamma'(\lambda))n})) < +\infty \quad \mbox{ $\Pi_x^{\lambda}$-a.s. }
	\end{align}
	where  $C$ is a constant depending on $\omega$ and may change values from line to line.

	Also by the arguments in Steps 2 and 3,
	$\widetilde {\mathbf w}(t,x)$ satisfies
	\begin{equation}
	\frac{\partial \widetilde {\mathbf w}}{\partial t} = \frac{1}{2} \frac{\partial^2 \widetilde {\mathbf w}}{\partial x^2} + (\frac{\psi_x(x,\lambda)}{\psi(x,\lambda)} - \lambda) \frac{\partial \widetilde {\mathbf w}}{\partial x} -
    \mathbf g \cdot A({\mathbf w}) \widetilde {\mathbf w},
	\end{equation}
	and the maximum principle holds.
	Hence, $\widetilde {\mathbf w}(t,x)$ is bounded in $[0,\frac{1}{\nu}]\times \R$.
	Since $A(\cdot)$ is non-decreasing,
	\eqref{w_estimate} implies
	\begin{equation}
	A(\mathbf w)(t-s,Y_s) \leq A(C_1e^{-\lambda(Y_s+\nu s)}).
	\end{equation}
	By the periodicity of $\{Y_t\}$, we have
	\begin{align}
	\widetilde  {\mathbf w}(t,y+n) \geq \beta \Pi_{y+n}^{\lambda} \left[ e^{-\int_0^{\infty} \|\mathbf g\|_\infty A(C_1e^{-\lambda(Y_s+\nu s)}) \mathrm{d}s } \right] = \beta \Pi_y^{\lambda} \left[ e^{-\int_0^{\infty} \|\mathbf g\|_\infty A(C_1e^{-\lambda(Y_s+n+\nu s)}) \mathrm{d}s } \right].
	\end{align}
	To prove the theorem, it suffices to show
	\begin{equation}\label{fny_converge}
	\Pi_y^{\lambda} \left[ e^{-\int_0^{\infty} \|\mathbf g\|_\infty A(C_1e^{-\lambda(Y_s+n+\nu s)}) \mathrm{d}s } \right] \rightarrow 1, \quad \mbox{ as $n\rightarrow\infty$ uniformly for $y\in [0,1]$.}
	\end{equation}
	We know
	\begin{equation}
	\int_0^{\infty} \|\mathbf g\|_\infty A(C_1e^{-\lambda(Y_s+n+\nu s)}) \mathrm{d}s < +\infty  \quad  \mbox{ $\Pi_x^{\lambda}$-a.s.}
	\end{equation}
	Using an argument similar to that in Step 3, \eqref{fny_converge} follows from
	$\lim_{u\downarrow 0} A(u) = 0$,
		the dominated convergence theorem and Dini's theorem.
	This completes the proof.
\end{proof}

\section{Proof of Theorem \ref{thrm_asym_crit}}
In this section, we prove the asymptotic behavior in the critical case.
\begin{proof}[Proof of Theorem \ref{thrm_asym_crit}]
We  prove the theorem in seven steps. In the first six steps, we consider the case that $L=1$. In the last step we consider general $L$.

\textbf{Step 1} Suppose $L=1$. Put $\mathbf w(t,x) = 1 -\mathbf u(t,x)$. We first prove that  $\mathbf w(t,x)$ decays exponentially with rate at least $-\lambda$ uniformly in $t\in [0,\frac{1}{\nu}]$.
Using an argument similar to that of Theorem \ref{thrm_asym_super}, we have that, for $t, x\in\R$, $\mathbf w(t,x)$ satisfies
\begin{equation}\label{KPPequation_w_crit}
\begin{cases}
&\frac{\partial \mathbf w}{\partial t} = \frac{1}{2} \frac{\partial^2 \mathbf w}{\partial x^2} +
\mathbf g\cdot(\mathbf w-\mathbf w^2),\\
&\mathbf w(t+\frac{1}{\nu^*},x) = \mathbf w(t,x-1).
\end{cases}
\end{equation}
By the Feynman-Kac formula, we get
\begin{equation}
\mathbf w(T,x) = \Pi_x \left[\mathbf w(T-t,B_t) e^{\int_0^t \mathbf g(B_s)(1-\mathbf w(T-s,B_s))\mathrm{d}s} \right], \quad \mbox{for $T\in\R$, $t>0$.}
\end{equation}
For any $\lambda<\lambda^*$, define
\begin{equation}\label{vtilde_w_relation}
\widetilde{\mathbf w}(t,x) = \frac{e^{\lambda x-\gamma(\lambda)t}\mathbf w(t,x)} {\psi(x,\lambda)}.
\end{equation}
Changing measure with $\Xi_t(\lambda)$
and following the same ideas in Step 1 of the proof of Theorem \ref{thrm_asym_super}, we get that
\begin{equation}\label{vtilde_represent}
\widetilde{\mathbf w}(T,x) = \Pi_x^{\lambda} \left[\widetilde{\mathbf w}(T-t,Y_t) e^{-\int_0^t \mathbf g(Y_s)\mathbf w(T-s,Y_s)\mathrm{d}s} \right],
\end{equation}
 and $\{\widetilde{\mathbf w}(-t,Y_t) e^{-\int_0^t \mathbf g(Y_s) \mathbf w(-s,Y_s)\mathrm{d}s}, \Pi_x^{\lambda} \}_{t\geq 0}$
is a positive martingale.
Therefore, we have
\begin{equation}
\limsup_{t\rightarrow\infty} \left\{\frac{\ln \widetilde{\mathbf w}(-t,Y_t)}{Y_t+\nu^*t} - \frac{1}{Y_t+\nu^*t} \int_0^t \mathbf g(Y_s)\mathbf w(-s,Y_s)\mathrm{d}s \right\}  \leq 0 \quad
\Pi_x^{\lambda}\mbox{-a.s.}
\end{equation}
Notice that for $\lambda<\lambda^*$,
\begin{align}
\lim_{t\rightarrow\infty} \frac{Y_t+\nu^*t}{t} \rightarrow -\gamma'(\lambda) + \nu^* > 0.
\end{align}
Combining this with \eqref{vtilde_w_relation}, we get
\begin{align}
\limsup_{t\rightarrow\infty} \frac{\ln \widetilde{\mathbf w}(-t,Y_t)}{Y_t+\nu^*t} &=  \limsup_{t\rightarrow\infty} \frac{\ln(e^{\lambda(Y_t+\nu^*t) + (\gamma(\lambda)-\lambda \nu^*)t} \mathbf w(-t,Y_t))}{Y_t+\nu^*t}\\ &= \limsup_{t\rightarrow\infty}  \frac{\ln \mathbf w(-t,Y_t)}{Y_t+\nu^*t} + \lambda + \frac{\gamma(\lambda)-\lambda \nu^*}{\nu^*-\gamma'(\lambda)}
\leq 0 \quad
\Pi_x^{\lambda}\mbox{-a.s.}
\end{align}
where $\gamma(\lambda) - \lambda \nu^*>0$.
Hence $\mathbf w(-t,Y_t)$ decays exponentially with rate at least $-\lambda$ $\Pi_x^{\lambda}$-almost surely. This implies
\begin{equation}
\int_0^\infty \mathbf g(Y_s)\mathbf w(-s,Y_s)\mathrm{d}s<+\infty\quad
\Pi_x^{\lambda}\mbox{-a.s.}
\end{equation}
Then, $\widetilde {\mathbf w}(-t,Y_t)$ converges $\Pi_x^{\lambda}$-almost surely. Using a coupling method similar to that of Step 2 in the proof of Theorem \ref{thrm_asym_super}, we get that the limit of $\widetilde {\mathbf w}(-t,Y_t)$ is a constant. Notice that
\begin{equation}
\widetilde {\mathbf w}(t+\frac{1}{\nu^*},x+1) = \frac{e^{\lambda (x+1)-\gamma(\lambda)(t+\frac{1}{\nu^*})}\mathbf w(t+\frac{1}{\nu^*},x+1)} {\psi(x+1,\lambda)} = e^{\lambda-\frac{\gamma(\lambda)}{\nu^*}} \widetilde  {\mathbf w}(t,x) < \widetilde {\mathbf w}(t,x),
\end{equation}
where we used $\mathbf w(t+\frac{1}{\nu^*},x+1) = \mathbf w(t,x)$ and $\nu^* < \frac{\gamma(\lambda)}{\lambda}$ for $\lambda<\lambda^*$. Therefore, for any $k\in\N$,
\begin{equation}
\widetilde {\mathbf w}(-t+\frac{k}{\nu},Y_t+k) \leq \widetilde {\mathbf w}(-t,Y_t).
\end{equation}
Using an argument similar to that of Step 3 in the proof of Theorem \ref{thrm_asym_super}, we get $\widetilde {\mathbf w}(t,x)$ is bounded in
$[0,\frac{1}{\nu^*}]\times \R$.
Then, by \eqref{vtilde_w_relation}, $\mathbf w(t,x)$ decays exponentially with rate at least  $-\lambda$ uniformly in
$t\in [0,\frac{1}{\nu^*}]$.

\textbf{Step 2}
Recall the definitions of $h$, $\tau_{\lambda}^x$, $\Lambda_t^{(x,\lambda)}$
and $\Pi^{(x,\lambda)}_y$ given in \eqref{def-h}, \eqref{def_tauB}, \eqref{mart_Lambda} and \eqref{meas_change} respectively,
with $\lambda = \lambda^*$ in \eqref{def-h}.
Fix $y\in \R$. For any $(t,x)$ such that $y-\gamma'(\lambda^*)t+h(x)>0$, define
\begin{equation}\label{v_w_relation_crit}
\widehat {\mathbf w}(t,x,y) := \frac{e^{\lambda^*x-\gamma(\lambda^*)t} \mathbf w(t,x)} {\psi(x,\lambda^*)(y-\gamma'(\lambda^*)t+h(x))},
\end{equation}
and for any $z>0$
\begin{equation}\label{tau-z}
\tau_z := \inf\{t\geq 0: y+\gamma'(\lambda^*)t+h(B_t) \leq z \}.
\end{equation}
We mention that $\tau_z$ actually depends on $y$.
For any $x\in\R$, we may define
\begin{equation}\label{tau-z-x}
\tau_z(x) := \inf\{t\geq 0: x+\gamma'(\lambda^*)t+h(B_t) \leq z \}.
\end{equation}
Then $\tau_z$ is a shorthand for $\tau_z(y)$, and
for any $x\in\R$, $\tau_z(y-x) = \tau_{z+x}$.
Using \eqref{v_w_relation_crit}, it is easy to show
\begin{equation}\label{v_periodic_crit}
\widehat{\mathbf w}(t+\frac{1}{\nu^*}, x+1, y) = \widehat{\mathbf w}(t,x,y).
\end{equation}
We first prove that  for any  $T\in\R$ and $t>0$,
\begin{equation}\label{v_feynman_crit}
\widehat {\mathbf w}(T,x,y+\nu^*T) = \Pi_x^{(y,\lambda^*)} \left( \widehat {\mathbf w}(T-t\wedge\tau_z, B_{t\wedge\tau_z}, y+\nu^*T) e^{-\int_0^{t\wedge\tau_z} \mathbf g(B_s) \mathbf w(T-s,B_s) \mathrm{d}s } \right).
\end{equation}
First note that, by the Feynman-Kac formula and the optional stopping theorem,
\begin{equation}\label{FK-stopping}
\mathbf  w(T,x)= \Pi_x \left(\mathbf w(T-t\wedge\tau_z, B_{t\wedge\tau_z}) e^{\int_0^{t\wedge\tau_z} \mathbf g(B_s) (1-\mathbf w(T-s,B_s)) \mathrm{d}s} \right),\quad T\in \R, t>0.
\end{equation}
Noticing that $\Lambda_{t\wedge\tau_z}^{(y,\lambda^*)} > 0$ and $\nu^* = \gamma'(\lambda^*)$, a direct calculation shows that for $x>h^{-1}(y)$,
\begin{align}
&\Pi_x^{(y,\lambda^*)} \left( \psi(x,\lambda^*)(y+h(x)) e^{-\lambda^*x+\gamma(\lambda^*)T} \widehat {\mathbf w}(T-t\wedge\tau_z, B_{t\wedge\tau_z}, y+\nu^*T) e^{-\int_0^{t\wedge\tau_z} \mathbf g(B_s) \mathbf w(T-s,B_s) \mathrm{d}s}  \right)\\
= &\Pi_x^{(y,\lambda^*)} \left( \frac{\Lambda_0^{(y,\lambda^*)}}{\Lambda_{t\wedge\tau_z}^{(y,\lambda^*)}} \mathbf w(T-t\wedge\tau_z, B_{t\wedge\tau_z}) e^{\int_0^{t\wedge\tau_z} \mathbf g(B_s) (1-\mathbf w(T-s,B_s)) \mathrm{d}s} \right)\\
= &\Pi_x \left( \frac{\Lambda_0^{(y,\lambda^*)}}{\Lambda_{t\wedge\tau_z}^{(y,\lambda^*)}} \mathbf w(T-t\wedge\tau_z, B_{t\wedge\tau_z}) e^{\int_0^{t\wedge\tau_z} \mathbf g(B_s) (1-\mathbf w(T-s,B_s)) \mathrm{d}s} \frac{\Lambda_{t\wedge\tau_z}^{(y,\lambda^*)}}{\Lambda_0^{(y,\lambda^*)}} \mathbf{1}_{\{\Lambda_{t\wedge\tau_z}^{(y,\lambda^*)}>0  \} } \right)\\
= &\Pi_x \left( \mathbf w(T-t\wedge\tau_z, B_{t\wedge\tau_z}) e^{\int_0^{t\wedge\tau_z} \mathbf g(B_s) (1-\mathbf w(T-s,B_s)) \mathrm{d}s} \right)
= \mathbf  w(T,x),
\end{align}
where in the last equality we used \eqref{FK-stopping}.
Then using the definition of $\widehat {\mathbf w}$ given in \eqref{v_w_relation_crit},
we get \eqref{v_feynman_crit}.

Notice that $\{B_t, t\geq 0;\Pi_x^{(y,\lambda^*)}\}$ is not a Brownian motion.
By the argument in the last paragraph of Subsection \ref{ss:mcBB}, we have that $\{y+\gamma'(\lambda^*)T(t)+h(B_{T(t)}), \, \Pi_x^{(y,\lambda^*)}\}$ is a Bessel-3 process starting from $y+h(x)$.
Define
\begin{equation}\label{def-f-St}
f(B_t) = \widehat {\mathbf w}(-t,B_t,y) e^{-\int_0^t \mathbf g(B_s)\mathbf w(-s,B_s)\mathrm{d}s}.
\end{equation}
By the  Markov property, for any $0<s<t$,
\begin{align}
\Pi_x^{(y,\lambda^*)}[f(B_{t\wedge\tau_z})| \F_{s}]= f(B_{s\wedge\tau_z}).
\end{align}
The proof of the display above is given in the Appendix, see Lemma \ref{local mart}.
This implies that $\{f(B_{t\wedge\tau_z}), \Pi_x^{(y,\lambda^*)}\}$ is a martingale. Since $\tau_z\rightarrow\infty$ as $z\downarrow 0$, $\{f(B_t), \Pi_x^{(y,\lambda^*)}\}$ is a local martingale.

\textbf{Step 3} Non-negative local martingales are
non-negative super-martingales and hence must converge.
Therefore $f(B_t)$, defined by \eqref{def-f-St},
converges $\Pi_x^{(y,\lambda^*)}$-a.s. as $t\to\infty$.
By Step 1,  $\mathbf w(t,x)$ decays exponentially fast uniformly in $t\in [0,\frac{1}{\nu^*}]$.
Combining this with the fact that
$B_t+\nu^*t = B_t + \gamma'(\lambda^*)t$ grows no slower than $t^{1/2-\epsilon}$, we have
$$\int_0^{\infty} \mathbf g(B_t) \mathbf w(-t,B_t) \mathrm{d}t < +\infty,\quad \Pi_x^{(y,\lambda^*)}\mbox{-a.s.}$$
Therefore, by \eqref{def-f-St}, the convergence of $f(B_t)$ implies that $\widehat {\mathbf w}(-t,B_t,y)$ converges $\Pi_x^{(y,\lambda^*)}$-almost surely to some limit, say $\xi_x$.

By \eqref{v_feynman_crit}, we have
\begin{equation}\label{v_feynman_crit_y}
 \widehat {\mathbf w}(T,x,y) = \Pi_x^{(y-\nu^*T,\lambda^*)} \left( \widehat {\mathbf w}(T-t\wedge\tau_z(y-\nu^*T), B_{t\wedge\tau_z(y-\nu^*T)}, y) e^{-\int_0^{t\wedge\tau_z(y-\nu^*T)} \mathbf g(B_s) \mathbf w(T-s,B_s) \mathrm{d}s } \right).
\end{equation}
By the same method, we can get
\begin{equation}
\{\widehat {\mathbf w}(T-t\wedge\tau_z(y-\nu^*T), B_{t\wedge\tau_z(y-\nu^*T)}, y) e^{-\int_0^{t\wedge\tau_z(y-\nu^*T)} \mathbf g(B_s) \mathbf w(T-s,B_s) \mathrm{d}s}, \, \Pi_x^{(y-\nu^*T,\lambda^*)} \}
\end{equation}
is a local martingale and $\widehat {\mathbf w}(T-t,B_t,y)$ converges $\Pi_x^{(y-\nu^*T,\lambda^*)}$-almost surely to some limit, say $\xi_x^T$.

Next we use a coupling method to prove that
there is a constant $\beta\geq 0$ such that
\begin{equation}\label{constant-forall-T}
\xi_x^T = \beta, \quad \Pi_x^{(y-\nu^*T,\lambda^*)}\mbox{-a.s.}\quad \forall T\geq 0.
\end{equation}
Similar to Step 2 in the proof of Theorem \ref{thrm_asym_super}, consider a process $\{(B_t^1,B_t^2), t\geq 0; \widetilde{\Pi}_x^{(y,T)} \}$
with $\{B_t^1, t\geq 0\}$ and $\{B_t^2, t\geq 0\}$ being independent, and
\begin{equation}
\left\{B_t^1, t\geq 0; \widetilde{\Pi}_x^{(y,T)}\right\} \overset{d}{=} \left\{B_t, t\geq 0; \Pi_x^{(y,\lambda^*)}\right\}, \quad \left\{B_t^2, t\geq 0; \widetilde{\Pi}_x^{(y,T)}\right\} \overset{d}{=} \left\{B_t, t\geq 0; \Pi_x^{(y-v^*T,\lambda^*)}\right\}.
\end{equation}
Define random curves
\begin{equation}
\mathcal{L}_k^1 = \left\{\left(-t+\frac{k}{\nu^*}, B_t^1+k\right): t\geq\frac{k-1}{\nu^*} \right\}, \quad k\in\N,
\end{equation}
and
\begin{equation}
\mathcal{L}_k^2 = \left\{\left(T-t+\frac{k}{\nu^*}, B_t^2+k\right): t\geq\frac{k-1}{\nu^*}+T \right\}, \quad k\in\N.
\end{equation}
Notice that all the curves start from the line $\{\frac{1}{\nu^*}\} \times \R$, and for each $i=1, 2$, if $\mathcal{L}_1^i$ is given, we can get all the curves $\mathcal{L}_k^i$ by translation. Using the fact that $y + h(B_{T(t)}) + \gamma'(\lambda^*)T(t)$ is a Bessel-3 process, $\gamma'(\lambda^*) = \nu^*$ and $|h(x) -x|$ is bounded, we have
\begin{equation}
\lim_{t\rightarrow\infty} B_t + \nu^*t = \infty \quad \mbox{$\Pi_x^{(y,\lambda^*)}$-a.s.}
\end{equation}
Now we show for $\widetilde{\Pi}_x^{(y,T)}$-almost surely all $\omega$, it holds that for any $k\in\N$, $\mathcal{L}_k^1$ and $\mathcal{L}_{k+1}^1$ intersect each other. It is also equivalent to show for any $k\in\N$, $\mathcal{L}_k^1$ and $\mathcal{L}_{k+1}^1$ intersect each other $\widetilde{\Pi}_x^{(y,T)}$-almost surely.
If there exists $t\geq \frac{k-1}{v^*}$ such that $B_{t+\frac{1}{\nu^*}}^1 + 1 = B_t^1 $, then
\begin{equation}
\left(-t+\frac{k}{\nu^*}, B_t^1+k\right) = \left(-\left(t+\frac{1}{\nu^*}\right)+\frac{k+1}{\nu^*}, B_{t+\frac{1}{\nu^*}}^1+k+1\right),
\end{equation}
which implies $\mathcal{L}_k^1$ and $\mathcal{L}_{k+1}^1$ intersect each other. Notice that
\begin{align}
B_{t+\frac{1}{\nu^*}}^1 + 1 = B_t^1 &\Longleftrightarrow B_{t+\frac{1}{\nu^*}}^1 + \nu^*t + 1 = B_t^1 + \nu^*t\\ &\Longleftrightarrow h(B_{t+\frac{1}{\nu^*}}^1) + \nu^*(t +\frac{1}{\nu^*}) = h(B_t^1) + \nu^*t\\
&\Longleftrightarrow \widehat R_{\langle M^1 \rangle_{t+\frac{1}{\nu^*}}} = \widehat R_{\langle M^1 \rangle_{t}},
\end{align}
where $\widehat R_t:=y+h(B_{T(t)}^1) + \nu^*T(t)$ is a standard Bessel-3 process started at $y+h(x)$.
By \eqref{quadratic}, we have
\begin{equation}
\langle M^1 \rangle_{t+\frac{1}{\nu^*}} - \langle M^1 \rangle_t \in \left[ \frac{c_1}{\nu^*}, \frac{c_2}{\nu^*} \right],
\end{equation}
where $\langle M^1 \rangle_t = \int_0^t (h'(B_s^1))^2 \mathrm{d}s$.
Put
\begin{equation}
l(t) = \widehat R_{\langle M^1 \rangle_{t+\frac{1}{\nu^*}}} -\widehat R_{\langle M^1 \rangle_{t}},
\end{equation}
then $l(t)$ is continuous $\widetilde{\Pi}_x^{(y,T)}$-almost surely.
Since $\widetilde{\Pi}_x^{(y,T)}\left(\lim_{t\to\infty}\widehat R_t =\infty\right)=1$, we have
\begin{equation}
\widetilde{\Pi}_x^{(y,T)} \left( \mbox{ for any }T>0, \exists\ t>T \mbox{ s.t. } l(t)>0 \right) = 1.
\end{equation}
To prove $\mathcal{L}_k^1$ and $\mathcal{L}_{k+1}^1$ intersect $\widetilde{\Pi}_x^{(y,T)}$-almost surely, it suffices to show that
\begin{equation}
\widetilde{\Pi}_x^{(y,T)} ( \mbox{ for any } T>0, \exists\ t>T \mbox{ s.t. } l(t)<0) = 1.
\end{equation}
Notice that
\begin{equation}
l(t) \leq \max_{s\in [\frac{c_1}{\nu^*},\frac{c_2}{\nu^*}]}\left( \widehat R_{s+\langle M^1 \rangle_{t}} - \widehat R_{\langle M^1 \rangle_{t}}\right).
\end{equation}
For simplicity, we assume $\nu^*=1$ in the remainder of this step. It suffices to show that
\begin{equation}\label{Bessel:e1}
\widetilde{\Pi}_x^{(y,T)} \left( \mbox{ for any }T>0, \exists\ t>T \mbox{ s.t. } \max_{s\in [c_1,c_2]}( \widehat R_{t+s} - \widehat R_t) < 0 \right) = 1.
\end{equation}
A classical result shows $\widehat R_t$ satisfies
\begin{equation}\label{Bessel:sde}
\mathrm{d} \widehat R_t = \mathrm{d} \widehat{B}_t + \frac{1}{\widehat R_t} \mathrm{d}t,
\end{equation}
where $(\widehat{B}_t; \widetilde{\Pi}_x^{(y,T)})$ is a standard Brownian motion.
By \eqref {Bessel:sde}, we have
\begin{align}
\widetilde{\Pi}_x^{(y,T)} \left(\max_{s\in [c_1,c_2]}( \widehat{B}_{t+s} - \widehat{B}_t )< -1 \right) &\geq \widetilde{\Pi}_x^{(y,T)} \left(\widehat{B}_{t+c_1} - \widehat{B}_t < -2,\, \max_{s\in [c_1,c_2]} \widehat{B}_{t+s} - \widehat{B}_{t+c_1} < 1 \right)\\
&= \Pi_0 (B_{c_1}<-2) \cdot \Pi_0 \left(\max_{s\in[0,c_2-c_1]} B_s < 1 \right) \geq C > 0,
\end{align}
where the constant $C$ does not depend on $t$. Hence
\begin{equation}
\sum_{j=0}^{\infty} \widetilde{\Pi}_x^{(y,T)} \left(\max_{s\in [c_1,c_2]} (\widehat{B}_{jc_2+s} - \widehat{B}_{jc_2}) < -1\right) = +\infty.
\end{equation}
By the second Borel-Cantelli lemma and the independent increments property of the Brownian motion, we have
\begin{equation}\label{Bessel:e2}
\widetilde{\Pi}_x^{(y,T)} \left(\max_{s\in [c_1,c_2]}( \widehat{B}_{jc_2+s} - \widehat{B}_{jc_2} )< -1, \mbox{ i.o. }\right) = 1.
\end{equation}
By $\eqref{Bessel:sde}$, it holds that
\begin{equation}
\widehat R_{t+s} - \widehat R_t = \widehat{B}_{t+s} - \widehat{B}_t + \int_t^{t+s} \frac{1}{\widehat R_r} \mathrm{d}r.
\end{equation}
If $\widehat R_{t+r}>c_2$ for $r\in [0,c_2]$ and $\widehat{B}_{t+s} - \widehat{B}_t < -1$ for $s\leq c_2$, then
\begin{equation}\label{Bessel:e3}
\widehat R_{t+s} -\widehat R_t <  -1 + \frac{s}{c_2} \leq 0.
\end{equation}
Since the Bessel-3 process is transient, we have
\begin{equation}\label{Bessel:transient}
\widetilde{\Pi}_x^{(y,T)}( \exists\ T>0, \mbox{ s.t. for any } t>T,  \widehat R_t>c_2 ) = 1.
\end{equation}
Combining \eqref{Bessel:e2}, \eqref{Bessel:e3} and \eqref{Bessel:transient}, we get \eqref{Bessel:e1}.
So $\widetilde{\Pi}_x^{(y,T)}$-almost surely,
it holds that for any $k\in\N$, $\mathcal{L}_k^1$ and $\mathcal{L}_{k+1}^1$ intersect each other. Using the same method, we can also prove
$\widetilde{\Pi}_x^{(y,T)}$-almost surely,
for any $k,j\in\N$, $\mathcal{L}_k^1$ and $\mathcal{L}_{k+j}^1$ intersect each other.

Consider
\begin{align}
\Omega_0 = &\bigcap_{k,j\in\mathbb{N}} \left\{\mbox{$\mathcal{L}_k^1$, $\mathcal{L}_{k+j}^1$ intersect}\right\} \bigcap\left\{\lim_{t\rightarrow\infty}  \widehat {\mathbf w}(-t, B_t^1,y),\, \lim_{t\rightarrow\infty} \widehat {\mathbf w}(T-t,B_t^2,y) \mbox{ exists} \right\}  \\
&\bigcap \left\{ \lim_{t\rightarrow\infty}B_t^i + \nu^*t = +\infty, \mbox{ and $B_t^i$ is continuous for $i=1,2$} \right\},
\end{align}
then $\widetilde{\Pi}_x^{(y,T)}(\Omega_0) = 1$. For any $\omega\in \Omega_0$ and $j\in\N$, we know $\mathcal{L}_j^2$ starts from the point $\left(\frac{1}{\nu^*}, B^2_{\frac{j-1}{\nu^*}+T}+j\right)$ and there exists $k\in\N$ such that
\begin{equation}
B_{\frac{k-1}{\nu^*}}^1(\omega) + k \leq B^2_{\frac{j-1}{\nu^*}+T}(\omega)+j \leq B_{\frac{k}{\nu^*}}^1(\omega) + k + 1.
\end{equation}
Hence the starting point of $\mathcal{L}_j^2(\omega)$ is between the starting points of $\mathcal{L}_k^1(\omega)$ and $\mathcal{L}_{k+1}^1(\omega)$. Since $\mathcal{L}_k^1(\omega)$ and $\mathcal{L}_{k+1}^1(\omega)$ intersect, we have $\mathcal{L}_j^2(\omega)$ must intersect either $\mathcal{L}_k^1(\omega)$ or $\mathcal{L}_{k+1}^1(\omega)$. We use $(s_1(\omega),x_1(\omega))$ to denote the intersection point. By \eqref{v_periodic_crit}, there exist $t_1^1(\omega)$, $t_1^2(\omega)$ satisfying
\begin{equation}
\widehat {\mathbf w}\left(-t_1^1(\omega), B_{t_1^1}^1(\omega), y\right) = \widehat {\mathbf w}\left(s_1(\omega),x_1(\omega),y\right) =  \widehat {\mathbf w}\left(T-t_1^2(\omega), B_{t_1^2}^2(\omega),y\right).
\end{equation}
Since $j$ is arbitrary, we can find $\{t_n^i(\omega):\, n\in\N, \, i=1,2\}$ by induction such that
\begin{equation}
\widehat {\mathbf w}\left(-t_n^1(\omega), B_{t_n^1}^1(\omega), y\right) =  \widehat {\mathbf w}\left(T-t_n^2(\omega), B_{t_n^2}^2(\omega),y\right)
\end{equation}
and satisfying
\begin{equation}
\lim_{n\rightarrow\infty} t_n^i(\omega) = \infty \quad \mbox{ for $i=1,2$.}
\end{equation}
Therefore, we have
\begin{equation}
\widetilde{\Pi}_x^{(y,T)} \left(\lim_{t\rightarrow\infty} \widehat {\mathbf w}(-t,B_t^1,y) = \lim_{t\rightarrow\infty} \widehat {\mathbf w}(T-t,B_t^2,y)\right) = 1.
\end{equation}
By the independence of $\{B_t^1, t\geq 0\}$ and $\{B_t^2, t\geq 0\}$, we get the limits must be the  same. So there is a constant $\beta\geq 0$ such that
\begin{equation}
\xi_x = \beta, \quad \Pi_x^{(y,\lambda^*)}\mbox{-a.s.} \quad \mbox{ and } \quad
\xi_x^T = \beta, \quad \Pi_x^{(y-v^*T,\lambda^*)}\mbox{-a.s.}
\end{equation}
Thus \eqref{constant-forall-T} is valid.

\textbf{Step 4}
In this step we prove $\beta>0$  by contradiction.
If $\beta=0$,
then $\widehat {\mathbf w}(-t,B_t,y) \rightarrow 0$ as $t\rightarrow\infty$.
Hence the positive local martingale
\begin{equation}
\widehat {\mathbf w}(-t,B_t,y)e^{-\int_0^t \mathbf g(B_s)\mathbf w(-s,B_s) \mathrm{d}s} \rightarrow 0, \quad \mbox{ $\Pi_x^{(y,\lambda^*)}$-a.s. as $t\rightarrow\infty$}.
\end{equation}
In Step 2 we proved that $\{f(B_{t\wedge\tau_z}), \Pi_x^{(y,\lambda^*)}\}$ is a martingale.
Thus if $y+h(x)>z>0$,  we have
\begin{align}
\widehat {\mathbf w}(0,x,y) &= \Pi_x^{(y,\lambda^*)} \left( \widehat {\mathbf w}(-t\wedge\tau_z, B_{t\wedge\tau_z}, y) e^{-\int_0^{t\wedge\tau_z} \mathbf g(B_s) \mathbf w(-s,B_s) \mathrm{d}s } \right)\\
&= \Pi_x^{(y,\lambda^*)} \left( \widehat {\mathbf w}(-\tau_z, B_{\tau_z}, y) e^{-\int_0^{\tau_z} \mathbf g(B_s) \mathbf w(-s,B_s) \mathrm{d}s } \mathbf{1}_{\{\tau_z<\infty\}} \right)\\
&= \Pi_x^{(y,\lambda^*)} \left( \frac{e^{\lambda^*B_{\tau_z}+\gamma(\lambda^*)\tau_z} \mathbf w(-\tau_z, B_{\tau_z})} {\psi( B_{\tau_z},\lambda^*)(y+\gamma'(\lambda^*)\tau_z + h(B_{\tau_z}))} e^{-\int_0^{\tau_z} \mathbf g(B_s) \mathbf w(-s,B_s) \mathrm{d}s } \mathbf{1}_{\{\tau_z<\infty\}} \right)\\
&= \Pi_x^{(y,\lambda^*)} \left( \frac{e^{\lambda^*B_{\tau_z}+\gamma(\lambda^*)\tau_z} \mathbf w(-\tau_z, B_{\tau_z})} {\psi( B_{\tau_z},\lambda^*)z} e^{-\int_0^{\tau_z} \mathbf g(B_s) \mathbf w(-s, B_s) \mathrm{d}s } \mathbf{1}_{\{\tau_z<\infty\}} \right).
\end{align}
Since $\{y+\gamma'(\lambda^*)T(t)+h(B_{T(t)}), \, \Pi_x^{(y,\lambda^*)}\}$ is a Bessel-3 process starting from $y+h(x)$, we have
\begin{equation}\label{tau-z<infty}
\Pi_x^{(y,\lambda^*)}(\tau_z<\infty) = \Pi_x^{(y,\lambda^*)}\left(\inf_{t\geq 0}\{ y+\gamma'(\lambda^*)T(t)+h(B_{T(t)})\} \leq z\right) = \frac{z}{y+h(x)}.
\end{equation}
By \eqref{v_w_relation_crit}, we have
\begin{align}
&\frac{e^{\lambda^*x}\mathbf w(0,x)}{\psi(x,\lambda^*)(y+h(x))}
= \widehat {\mathbf w}(0,x,y) \leq \Pi_x^{(y,\lambda^*)} \left( \frac{e^{\lambda^*B_{\tau_z}+\gamma(\lambda^*)\tau_z} \mathbf w(-\tau_z, B_{\tau_z})} {\psi( B_{\tau_z},\lambda^*)z}  \mathbf{1}_{\{\tau_z<\infty\}} \right)\\
&= \Pi_x^{(y,\lambda^*)} \left( \frac{e^{\lambda^*(z-y+\psi_{\lambda}(B_{\tau_z},\lambda^*)/ \psi(B_{\tau_z},\lambda^*))} \mathbf w(-\{\tau_z\}, B_{\tau_z}+\nu^*[\tau_z])} {\psi( B_{\tau_z},\lambda^*)z}  \mathbf{1}_{\{\tau_z<\infty\}} \right)\\
&\leq C_1 \Pi_x^{(y,\lambda^*)}(\tau_z<\infty)
\leq C_1 \frac{z}{y+h(x)},
\end{align}
where we used the facts that  $\psi_{\lambda}/\psi$ is bounded and that $\mathbf w(t,x)$ is bounded in
$(t,x)\in [-\frac{1}{\nu^*},0]\times [-C_2,C_2]$.
Here $C_1,C_2$ are constants only depending only on $y,z,\lambda^*$. Hence we get that for $x>h^{-1}(z-y)$, $e^{\lambda^*x}\mathbf w(0,x)/\psi(x,\lambda^*)$ is bounded. Combining this with the fact $e^{\lambda^*x}\mathbf w(0,x)/\psi(x,\lambda^*) \rightarrow 0$ as $x\rightarrow-\infty$, we have $e^{\lambda^*x}\mathbf w(0,x)/\psi(x,\lambda^*)$ is bounded on  $\R$.
Similarly, we can prove that
$e^{\lambda^*x-\gamma(\lambda^*)t} \mathbf w(t,x)/\psi(x,\lambda^*)$ is bounded on  $[0,\frac{1}{\nu^*}]\times\R$.

By Step 1 of the proof of Theorem \ref{thrm_asym_super}, we know that
\begin{equation}
\left\{\frac{e^{\lambda^*Y_t+\gamma(\lambda^*)t}\mathbf w(-t,Y_t)}{\psi(Y_t,\lambda^*)} e^{-\int_0^t \mathbf g(Y_s)\mathbf w(-s,Y_s) \mathrm{d}s }, \Pi_x^{\lambda^*}\right\}_{t\geq 0}
\end{equation}
is a martingale, where $\{Y_t, \Pi_x^{\lambda^*}\}$ is a diffusion with infinitesimal generator
\begin{equation}
(\mathcal{A}f)(x) = \frac{1}{2} \frac{\partial^2 f(x)}{\partial x^2} + \left( \frac{\psi_x(x,\lambda^*)}{\psi(x,\lambda^*)} - \lambda^* \right) \frac{\partial f(x)}{\partial x}.
\end{equation}
So $\left\{\frac{e^{\lambda^*Y_t+\gamma(\lambda^*)t} \mathbf w(-t,Y_t)}{\psi(Y_t,\lambda^*)} , \Pi_x^{\lambda^*}\right\}$ is a positive submartingale that is bounded and hence must converge.
Using the argument similar to that in Step 2 in the proof of Theorem \ref{thrm_asym_super}, we have the limit of $\frac{e^{\lambda^*Y_t+\gamma(\lambda^*)t} \mathbf w(-t,Y_t)}{\psi(Y_t,\lambda^*)}$ is a constant.
Since $$\mathbf w(-t,Y_t) \leq 1, \; \liminf_{t\rightarrow\infty} (Y_t+\nu^*t) = -\infty \mbox{ and } \lim_{x\rightarrow-\infty}  e^{\lambda^*x}/\psi(x,\lambda^*) = 0,$$
we must have the constant is $0$. This implies $\mathbf w(0,x) \equiv 0$, which contradicts the definition of pulsating travelling waves.
Therefore, we have $\beta>0$.

\textbf{Step 5}
Fix $y\in\R$.
First we will show that $\widehat {\mathbf w}(t,x,y)$ is bounded for $(t,x)\in [0,\frac{1}{\nu^*}]\times [h^{-1}(z-y+\nu^*t),\infty)$, which implies that $(t,x)$ satisfies $y-\nu^*t+h(x)\geq z>0$.
Recall that $\phi := \phi(x,\lambda^*) = e^{-\lambda^*x} \psi(x,\lambda^*)$. By \eqref{v_w_relation_crit}, we can rewrite $\widehat {\mathbf w}$ as
\begin{equation}
\widehat {\mathbf w}(t,x,y) = \frac{e^{-\gamma(\lambda^*)t}\mathbf w(t,x)} {\phi(x,\lambda^*)\left(y-\gamma'(\lambda^*)t\right) - \phi_{\lambda}(x,\lambda^*)}.
\end{equation}
By \eqref{phi-equ} and \eqref{phi_lambda_eq}, a direct calculation yields
\begin{equation}\label{v_equation_crit}
\frac{\partial \widehat {\mathbf w}}{\partial t} = \frac{1}{2} \frac{\partial^2 \widehat {\mathbf w}}{\partial x^2} + \frac{(y-\gamma'(\lambda^*))\phi_x - \phi_{\lambda x}} {(y-\gamma'(\lambda^*))\phi - \phi_{\lambda}} \frac{\partial \widehat {\mathbf w}}{\partial x} -
\mathbf g \mathbf w \widehat {\mathbf w}.
\end{equation}

Similar to Step 3 in the proof of Theorem \ref{thrm_asym_super}, fix $\omega\in\Omega_0$, then for any $x_0$ large enough, there exist $k,j\in\N$ such that
$(t,x_0)$ is located between the curves $\mathcal{L}_j^1(\omega)$ and $\mathcal{L}_k^1(\omega)$ for any $t\in [0,\frac{1}{\nu^*}]$.
Since $\mathcal{L}_j^1(\omega)$ and $\mathcal{L}_k^1(\omega)$ must intersect each other, it follows from the maximum principle that $\widehat {\mathbf w}(t,x_0,y)$ is bounded by the maximum on boundary $\mathcal{L}_j^1(\omega)$ and $\mathcal{L}_k^1(\omega)$.
We know that $\widehat {\mathbf w}(t,x,y)$  along  $(t,x) \in\mathcal{L}_k^1(\omega)$ converges as $t\to\infty$, and then  $\widehat {\mathbf w}(t,x,y)$  is bounded  on $\bigcup_{j\geq 1}\mathcal{L}_j^1(\omega)$.
Since $\widehat {\mathbf w}(t,x,y)$ is continuous and $\widehat {\mathbf w}(t,h^{-1}(z-y+\nu^*t),y)$ is bounded, we have $\widehat {\mathbf w}(t,x_0,y)$ is bounded when $x_0$ is small.
Therefore,
$\widehat {\mathbf w}(t,x,y)$ is bounded in $[0,\frac{1}{\nu^*}]\times [h^{-1}(z-y+\nu^*t),\infty)$. We denote the bound by  $K_z$.

Next, we will show that
\begin{equation}\label{v_converge_crit}
\widehat {\mathbf w}(t,x,y) \rightarrow \beta \quad
 \mbox{ as }x\rightarrow\infty\mbox{ uniformly in } t\in \left[0,\frac{1}{\nu^*}\right].
\end{equation}
Recall that
\begin{equation}
 \widehat {\mathbf w}(T,x,y) = \Pi_x^{(y-\nu^*T,\lambda^*)} \left( \widehat {\mathbf w}(T-t\wedge\tau_z(y-\nu^*T), B_{t\wedge\tau_z(y-\nu^*T)}, y) e^{-\int_0^{t\wedge\tau_z(y-\nu^*T)} \mathbf g(B_s) \mathbf w(T-s,B_s) \mathrm{d}s } \right).
\end{equation}
Letting  $t\rightarrow\infty$, we have for $T\in [0,\frac{1}{\nu^*}]$,
\begin{align}
\widehat {\mathbf w}(T,x,y) &\leq  \Pi_x^{(y-\nu^*T, \lambda^*)}\left(\beta \mathbf{1}_{\{\tau_z(y-\nu^*T)=\infty\}} + K_z \mathbf{1}_{\{\tau_z(y-\nu^*T)<\infty\}}\right)\\
&= \beta \left(1-\frac{z}{h(x)+y-\nu^*T}\right) + K_z \frac{z}{h(x)+y-\nu^*T},
\end{align}
where we used \eqref{tau-z<infty}.
Therefore, we have
\begin{equation}\label{v_limsup}
\limsup_{x\rightarrow\infty} \widehat {\mathbf w}(T,x,y) \leq \beta \quad
\mbox{ uniformly for } T\in \left[0,\frac{1}{\nu^*}\right].
\end{equation}
On the other hand, we have the following estimate:
\begin{equation}
\widehat {\mathbf w}(T,x,y) \geq \Pi_x^{(y-\nu^*T, \lambda^*)} \left(\beta \mathbf{1}_{\{\tau_z(y-\nu^*T)=\infty\}} e^{-\int_0^{\infty} \mathbf g(B_t) \mathbf w(T-t,B_t) \mathrm{d}t}\right).
\end{equation}
By \eqref{v_w_relation_crit}, we get
\begin{align}
\mathbf g(B_t)\mathbf w(T-t,B_t) &\leq
\|\mathbf g\|_\infty z K_z \max_{x\in[0,1]} \psi(x,\lambda^*) e^{-\lambda^*B_t+\gamma(\lambda^*)(T-t)}\\
&\leq C e^{-\lambda^*(y-\nu^*T + h(B_t)+\nu^*t)}
= C e^{-\lambda^* \widehat R^T_{\langle M \rangle_t}},
\end{align}
where $\widehat R^T_t = y-\nu^*T + h(B_{T(t)})+\nu^*T(t)$
is a Bessel-3 process started at $y-\nu^*T+h(x)$,
$T(t) = \inf\{s>0:\langle M \rangle_s>t \}$,
and the constant $C$ does not depend on $T$. Thus we have
\begin{align}
\widehat {\mathbf w}(T,x,y) &\geq \Pi_x^{(y-\nu^*T, \lambda^*)}\left(\beta \mathbf{1}_{\{\tau_z(y-\nu^*T)=\infty\}} e^{-\int_0^{\infty} C e^{-\lambda^* \widehat R^T_{\langle M \rangle_t}} \mathrm{d}t}\right)\\
&\geq \Pi_x^{(y-\nu^*T, \lambda^*)}\left(\beta \mathbf{1}_{\{\tau_{z+\nu^*T}=\infty\}} e^{-\int_0^{\infty} C e^{-\lambda^* \widehat R^T_t} \mathrm{d}t}\right),
\end{align}
where $\tau_z(y-\nu^*T) = \tau_{z+\nu^*T}$.
Define the stopping time
\begin{equation}
\sigma_x(\widehat R^T) := \inf\left\{t\geq 0: \widehat R^T_t = y-\nu^*T+h(x) \right\},
\end{equation}
and the function
\begin{equation}
f(x) := \Pi_{x}^{(y-\nu^*T, \lambda^*)}\left(\beta \mathbf{1}_{\{\tau_{z+\nu^*T}=\infty\}} e^{-\int_0^{\infty} C e^{-\lambda^* \widehat R^T_t} \mathrm{d}t}\right).
\end{equation}
By the Markov property, for $x_1<x_2$ we have
\begin{align}
f(x_1) = \Pi_{x_1}^{(y-\nu^*T, \lambda^*)}\left[ \mathbf{1}_{\{\sigma_{x_2}(\widehat R^T) < \tau_{z+\nu^*T}\}} e^{-\int_0^{\sigma_{x_2}(\widehat R^T)} C e^{-\lambda^* \widehat R^T_t} \mathrm{d}t} f(x_2)\right] \leq f(x_2),
\end{align}
that is, $f(x)$ is increasing. Put
\begin{equation}
f(\infty) := \lim_{x\rightarrow\infty} f(x).
\end{equation}
Since $\beta>0$ and $\int_0^{\infty} e^{-\lambda^*\widehat R^T_t} \mathrm{d}t < \infty$, $\Pi_{x}^{(y-\nu^*T, \lambda^*)}$-almost surely,
we have $f(x) > 0$. Moreover,
\begin{align}
\beta f(x) &= \lim_{n\rightarrow\infty}\Pi_{x}^{(y-\nu^*T, \lambda^*)}\left[\beta \mathbf{1}_{\{\sigma_{x+n}(\widehat R^T) < \tau_{z+\nu^*T}\}} e^{-\int_0^{\sigma_{x+n}(\widehat R^T)} C e^{-\lambda^* \widehat R^T_t} \mathrm{d}t} f(x+n)\right] \\
&= \Pi_{x}^{(y-\nu^*T, \lambda^*)}\left[ \lim_{n\rightarrow\infty} \beta \mathbf{1}_{\{\sigma_{x+n}(\widehat R^T) < \tau_{z+\nu^*T}\}} e^{-\int_0^{\sigma_{x+n}(\widehat R^T)} C e^{-\lambda^* \widehat R^T_t} \mathrm{d}t} \right] \lim_{n\rightarrow\infty} f(x+n)\\
&= \Pi_{x}^{(y-\nu^*T, \lambda^*)}\left[  \beta \mathbf{1}_{\{\tau_{z+\nu^*T}=\infty\}} e^{-\int_0^{\infty} C e^{-\lambda^* \widehat R^T_t} \mathrm{d}t} \right] f(\infty) = f(x) f(\infty).
\end{align}
Therefore, $f(\infty) = \beta$. Combining this with $\widehat {\mathbf w}(T,x,y) \geq f(x)$, we have
\begin{equation}\label{v_liminf}
\liminf_{x\rightarrow\infty} \widehat {\mathbf w}(T,x,y) \geq \beta \quad
\mbox{ uniformly for }T\in \left[0,\frac{1}{\nu^*}\right].
\end{equation}
By \eqref{v_limsup} and \eqref{v_liminf}, we get \eqref{v_converge_crit}.

\textbf{Step 6} Since $h(x) = x - \psi_{\lambda}(x,\lambda^*)/\psi(x,\lambda^*)$ and $\psi_{\lambda}/\psi$ is bounded, we have
\begin{align}
\lim_{x\rightarrow\infty} \widehat {\mathbf w} (t,x,y) = \lim_{x\rightarrow\infty} \frac{e^{\lambda^*x-\gamma(\lambda^*)t}\mathbf w(t,x)} {\psi(x,\lambda^*)x}.
\end{align}
Thus
\begin{equation}
\frac{e^{\lambda^*z-\gamma(\lambda^*)t}\mathbf w(t,z)} {\psi(z,\lambda^*)z} \rightarrow \beta \quad
\mbox{as }z\rightarrow\infty\mbox{ uniformly in }t\in \left[0,\frac{1}{\nu^*}\right].
\end{equation}
Using an argument similar to that in Step 3 of the proof of Theorem \ref{thrm_asym_super}, we have
\begin{equation}
\frac{e^{\lambda^* x}\mathbf w\left(\frac{y-x}{\nu^*}, y\right)}{ \psi(y,\lambda^*)(-\lfloor y-x \rfloor + y)} \rightarrow \beta \quad
\mbox{as }x\rightarrow\infty\mbox{ uniformly in }y\in [0,1].
\end{equation}
By $|- \lfloor y-x \rfloor +y-x|\leq 1$, we have
\begin{equation}
\frac{e^{\lambda^* x}\mathbf w\left(\frac{y-x}{\nu^*}, y\right)}{ x\psi(y,\lambda^*)} \rightarrow \beta \quad
\mbox{as }x\rightarrow\infty\mbox{ uniformly in }y\in [0,1].
\end{equation}

\textbf{Step 7} For the general branching mechanism, $\mathbf w=1-\mathbf u$ satisfies
\begin{equation}
\frac{\partial \mathbf w}{\partial t} = \frac{1}{2} \frac{\partial^2 \mathbf w}{\partial x^2} +
\mathbf g\cdot(1-\mathbf w-\mathbf f(1-\mathbf w)).
\end{equation}
Recall that $A(w) = m - \frac{1-w-\mathbf f(1-w)}{w}$.
As in Step 3, it suffices to show that
\begin{equation}
\int_0^{\infty} \mathbf g(B_t) \mathbf w(-t,B_t) \mathrm{d}t < \infty,\quad \Pi_x^{(y,\lambda^*)}\mbox{-a.s.}
\end{equation}
Since $B_t+\nu^*t$ behaves like $\sqrt{t}$ and $\mathbf w(t,x)$ decays exponentially fast, we have
\begin{align}
\int_0^{\infty} A(e^{-c\sqrt{t}}) \mathrm{d}t < \infty \, \mbox{ for some $c>0$} \Longrightarrow\int_0^{\infty} \mathbf g(B_t) A(\mathbf w)(-t,B_t) \mathrm{d}t < \infty.
\end{align}
Set $s = e^{-c\sqrt{t}}$, then
\begin{equation}
\int_0^{\infty} A(e^{-c\sqrt{t}}) \mathrm{d}t < \infty \Longleftrightarrow \int_0^1 A(s) \frac{|\log s|}{s} \mathrm{d}s < \infty.
\end{equation}
By \cite[Theorem 2]{CR90}, it holds that for $a>1$,
\begin{equation}\label{A_LlogL2}
\int_0^1 A(s) \frac{|\log s|^a}{s} \mathrm{d}s < \infty  \Longleftrightarrow \mathbf E(L(\log^+L)^{1+a})<\infty.
\end{equation}
Actually the proof of \cite[Theorem 2]{CR90} also works for  $a=1$.
So if $\mathbf E(L(\log^+L)^2)<\infty$, there exists $\beta>0$ such that
\begin{equation}
1-\mathbf u\left(\frac{y-x}{\nu^*}, y\right) \sim \beta xe^{-\lambda^* x}\psi(y,\lambda^*) \text{ as } x\rightarrow +\infty \text{ uniformly in } y\in [0,1].
\end{equation}
This completes the proof.
\end{proof}

\begin{remark}
The asymptotic behavior of pulsating travelling waves has been studied analytically in \cite{H08}. They considered 	the following more general equation:
\begin{equation}
u_t - \nabla \cdot (A(z) \nabla u) + q(z) \cdot \nabla u = f(z,u), \quad z\in \overline{\Omega},
\end{equation}
where $\Omega\subset\R^N$ is an unbounded domain, $A(z), q(z), f(z,u)$ are periodic in some sense. In our case,
\begin{equation}
\Omega = \R, \; A(z) \equiv \frac{1}{2}, \; q(z) \equiv 0, \mbox{ and } f(z,u) = \mathbf{g}(z)(1-u-\mathbf f(1-u)).
\end{equation}
\cite[Theorem 1.3]{H08}, the main result of the paper,
is similar to our Theorems \ref{thrm_asym_super} and \ref{thrm_asym_crit}, where $\phi(t,x)$ in \cite{H08} corresponds to $1-\mathbf u \left( \frac{t+x}{\nu}, x\right)$ in this paper, and $p^-(x,y), p^+(x,y)$ correspond to $0,1$ respectively.

\cite[Theorem 1.3]{H08} was proved under the assumptions \cite[(1.4), (1.7) and (1.8)]{H08}.
In our setup,
\cite[(1.4)]{H08} is equivalent to $\gamma(0) > 0$, which is  true under our assumptions.
\cite[(1.8)]{H08} is equivalent to
\begin{equation}
f(x,s) = \mathbf g(x)(1-s - \mathbf f(1-s)) \leq m \mathbf g(x) s \quad \mbox{ for } s\in [0,1],
\end{equation}
which is true for any generating function $\mathbf f$.
\cite[(1.7)]{H08} says  that there exist $\alpha>0$ and $\gamma>0$ such that the map $(x,s) \mapsto \mathbf g(x)(\mathbf f'(1-s)-1)$ belongs to $C^{0,\alpha}(\R\times [0,\gamma])$, which is equivalent to $g\in C^{0,\alpha}(\R)$ and $\mathbf f'(1-s)\in C^{0,\alpha}([0,\gamma])$.
We claim that
	\begin{equation}\label{Holder_LlogL}
	\mathbf f'(1-s) \in C^{\alpha}([0,\gamma]) \Longrightarrow \forall p \geq 1, \; \mathbf E(L(\log^+L)^{p}) < \infty.
	\end{equation}
Thus
the condition $\mathbf f'(1-s)\in C^{0,\alpha}([0,\gamma])$
is stronger than the condition $\mathbf E(L\log^+L)<\infty$ in the supercritical case and $\mathbf E(L(\log^+L)^2)<\infty$ in the critical case.
	Now we prove the claim. Notice that
	\begin{align}
	A(w) = m - \frac{1-w-\mathbf f(1-w)}{w} = m+1 - \frac{1-f(1-w)}{w} = m+1 - f'(1-\theta w),
	\end{align}
	where $\theta\in[0,1]$ and the last equality follows from the mean value theorem. Since  $f'(1) = m+1$ and $\mathbf f'(1-s) \in C^{\alpha}([0,\gamma])$, we have
	\begin{equation}
	A(w) = |m+1 - f'(1-\theta w)|
\leq C (\theta w)^{\alpha}  \leq C w^{\alpha},\quad \forall w\leq \gamma,
	\end{equation}
for some constant $C$.
	Therefore, for any constant $c>0$,
	\begin{equation}
	\int_0^{\infty} A(e^{-ct^{1/p}}) \mathrm{d}t \leq \int_0^{\infty} Ce^{-c\alpha t^{1/p}} \mathrm{d}t < \infty.
	\end{equation}
Using the substitution $s = e^{-ct^{1/p}}$,  we get
    \begin{equation}
    \int_0^1 A(s) \frac{|\log s|^{p-1}}{s} \mathrm{d}s < \infty.
    \end{equation}
	By \eqref{A_LlogL2}, this implies $\mathbf E(L(\log^+L)^p)<\infty$.
\end{remark}

\section{Proof of Theorem \ref{thrm3}}
The uniqueness of the pulsating travelling wave was proved analytically in \cite[Theorem 1.1]{HR}.
In this section, we will use probabilistic methods to prove the uniqueness in the supercritical case $|\nu|>\nu^*$ and critical case $|\nu|=\nu^*$.
\subsection{Martingales on stopping lines}\label{s:5.3}
First, we introduce the space of Galton-Watson trees. Let $\mathbb{T}$ be the space of Galton-Watson trees. A Galton-Watson tree $\tau \in \mathbb{T}$ is a point in the space of possible Ulam-Harris labels
$$
\Omega=\emptyset \cup \bigcup_{n \in \mathbb{N}}(\mathbb{N})^{n},
$$
where $\mathbb{N}=\{1,2,3, \ldots\}$ such that

\begin{enumerate}[(i)]
	\item $\emptyset \in \tau$ (the ancestor);
	\item if $u, v \in \Omega$, $uv \in \tau$ implies $u \in \tau$;
	\item for all $u \in \tau,$ there exists $A_u \in\{0,1,2, \ldots\}$
	such that for $j \in \mathbb{N}$,  $j \in \tau$ if and only if $1 \leq j \leq 1+A_u$.
\end{enumerate}
(Here $1+A_u$ is the number of offspring of $u$,
and $A_u$ has the same distribution as $L$.)

Each particle $u\in\tau$ has a mark $(\eta_u, B_u)\in \R^+\times C(\R^+,\R)$, where $\eta_u$ is the lifetime of $u$ and $B_u$ is the motion of $u$ relative to its birth position. Then the birth time of $u$ can be written as $b_u = \sum_{v<u} \eta_v$, the death time of $u$ is $d_u = \sum_{v\leq u}\eta_v$ and the position of $u$ at time $t$ is given by $X_u(t) = \sum_{v<u} B_v(\eta_v)+B_u(t-b_u)$, where $v<u$ denotes that $u$ is a descendant of $v$.

Now, on the space-time half plane $\{(y,t): y\in\R, t\in\R^+ \}$, consider the barrier $\Gamma^{(x,\nu)}$ described by the line $y+\nu t=x$ for $x>0$
and $\nu\geq \nu^*$.
When a particle hits this barrier,
it is stopped immediately.
Let $C(x,\nu)$ denote the random collection of particles stopped at the barrier, which is known as a stopping line.

By \cite[Theorem 1.1]{RSYa}, $W_t(\lambda^*) \rightarrow 0$ $\P_x$-almost surely, so we have
\begin{equation}
e^{-\lambda^*(m_t+\nu^*t)}
\min_{x\in[0,1]} \psi(x,\lambda^*) \leq W_t(\lambda) \rightarrow 0,
\end{equation}
where $m_t = \min\{X_u(t): u\in N_t\}$.
This yields
\begin{equation*}
\lim_{t\rightarrow\infty} (m_t + \nu^*t) = +\infty.
\end{equation*}
Therefore, all lines of descent from the ancestor will hit $\Gamma^{(x,\nu)}$ with probability one for all $x>x_0$, where $x_0$ is the position of the ancestor at time $t=0$. Similar to the argument in \cite{Ky} for BBM, we have $\lim_{x\rightarrow\infty} \inf\{|u|:u\in C(x,\nu) \} = \infty$ where $|u|$ is the generation of particle $u$.

For any $u\in C(x,\nu)$, let $\sigma_u$ denote the time at which the particle $u$ hits the barrier $\Gamma^{(x,\nu)}$.
Let $\F_{C(x,\nu)}$ be the $\sigma$-field generated by
\begin{align}
\left\{\begin{array}{l}
(w, A_w, \eta_{w},\{B_{w}(s): s \in[0, \eta_w]\}: \exists u\in C(x,\nu),\, s.t. \, w<u) \mbox{ and } \\
(u,\{B_{u}(s): s \in[0, \sigma_u-b_u]\}: u \in C(x,\nu))
\end{array}\right\}.
\end{align}

Using travelling wave solutions of the KPP equation, Chauvin \cite{Chauvin91} exhibited an intrinsic class of martingales.
An argument similar to the one used in \cite{Chauvin91}
gives the analogous martingales for BBMPE.

\begin{thrm}\label{thrm_mart_prod}
	Suppose that $\nu>\nu^*$ and that $\mathbf u(t,x)$ is a pulsating travelling wave with speed $\nu$. Define
	\begin{equation}\label{mart_prod}
	M_{x}(\nu) :=
	\prod_{u\in  C(x,\nu)} \mathbf u(-\sigma_u, X_u(\sigma_u)).
	\end{equation}
	Then, for any $z\in \R$, $\{M_{x}(\nu): x\ge z\}$ is a $\P_z$-martingale with respect to $\{\F_{C(x,\nu)}, x\geq z\}$,  has expectation $u(0,z)$, and   converges $\P_z$-almost surely and in $L^1(\P_z)$.
\end{thrm}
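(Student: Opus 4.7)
The plan is to use the fact that the pulsating travelling wave $\mathbf u$ solves the F-KPP equation \eqref{KPPeq2} on $\R\times\R$, combined with McKean's representation for BBMPE and the branching Markov property, to build the martingale on stopping lines in the spirit of Chauvin's construction.

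First I would record the fixed-time McKean identity: for any $T\in\R$, $t\ge 0$ and $z\in\R$,
\begin{equation}\label{plan_mckean}
\mathbf u(T,z) = \E_z\left[\prod_{\alpha\in N_t}\mathbf u(T-t,X_\alpha(t))\right],
\end{equation}
which follows from uniqueness of the Cauchy problem for \eqref{KPPeq2}: the right-hand side, as a function of $(t,z)$, is the unique $[0,1]$-valued solution with initial datum $\mathbf u(T,\cdot)$. To upgrade \eqref{plan_mckean} to the stopping line $C(x,\nu)$ for $z\le x$, I would consider the stopped branching process in which each particle is frozen upon hitting $\Gamma^{(x,\nu)}$. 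Writing $t_\alpha(t):=\sigma_\alpha\wedge t$ and
\[
\widetilde M_t := \prod_\alpha \mathbf u\bigl(T - t_\alpha(t),\,X_\alpha(t_\alpha(t))\bigr),
\]
where the product runs over all alive-or-frozen particles at time $t$, the strong Markov property at time $t$ together with \eqref{plan_mckean} applied to each not-yet-stopped subtree shows that $\{\widetilde M_t\}_{t\ge 0}$ is a $[0,1]$-valued $\P_z$-martingale with $\widetilde M_0=\mathbf u(T,z)$. Since $\nu>\nu^*$ and $m_t+\nu^* t\to\infty$ a.s., every lineage eventually crosses $\Gamma^{(x,\nu)}$, so $\widetilde M_t$ converges $\P_z$-a.s.\ and in $L^1$ to $\prod_{\alpha\in C(x,\nu)}\mathbf u(T-\sigma_\alpha,X_\alpha(\sigma_\alpha))$, yielding
\begin{equation}\label{plan_stoppingline}
\mathbf u(T,z) = \E_z\!\left[\prod_{\alpha\in C(x,\nu)}\mathbf u(T-\sigma_\alpha,X_\alpha(\sigma_\alpha))\right].
\end{equation}
The choice $T=0$ gives $\E_z[M_x(\nu)]=\mathbf u(0,z)$.

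For the martingale property in $x$, I would fix $z\le x_1\le x_2$ and condition on $\F_{C(x_1,\nu)}$. By the branching Markov property at this stopping line, the subtrees rooted at the particles $u\in C(x_1,\nu)$ are conditionally independent, each being a fresh BBMPE from $(X_u(\sigma_u),\sigma_u)$ by the time-homogeneity of the dynamics (the periodic branching rate is a spatial feature only). Applying \eqref{plan_stoppingline} to each such subtree (with $T=-\sigma_u$, ancestor position $X_u(\sigma_u)$, and the shifted barrier $x_2-\nu\sigma_u$, which corresponds to $\Gamma^{(x_2,\nu)}$ in absolute coordinates) together with conditional independence yields
\[
\E_z\!\left[M_{x_2}(\nu)\,\middle|\,\F_{C(x_1,\nu)}\right] = \prod_{u\in C(x_1,\nu)} \mathbf u(-\sigma_u,X_u(\sigma_u)) = M_{x_1}(\nu),
\]
so $\{M_x(\nu):x\ge z\}$ is a $\P_z$-martingale with expectation $\mathbf u(0,z)$. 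Since $\mathbf u$ takes values in $[0,1]$, $M_x(\nu)\in[0,1]$ is a bounded martingale and hence converges $\P_z$-almost surely and in $L^1(\P_z)$ by the martingale convergence theorem.

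The main technical obstacle is the rigorous passage from fixed times to the stopping line underlying \eqref{plan_stoppingline} — verifying that the frozen product $\{\widetilde M_t\}$ is indeed a martingale and that the interchange of limit and expectation is valid. This parallels Chauvin's original construction for classical BBM; the new feature here is the spatially periodic branching rate, but this does not disturb the time-homogeneity of the dynamics needed for the strong Markov property, so the classical scheme carries over with only notational modifications.
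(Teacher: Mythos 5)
Your overall strategy is the same Chauvin-style product-martingale construction the paper uses, but your approximation of the stopping-line product is genuinely different: you truncate in time (freezing each particle at $\sigma_u\wedge t$ and letting $t\to\infty$), whereas the paper truncates in generation number, introducing $M^{(n)}=\prod_{u\in L_\tau,|u|\le n}\mathbf u(-\sigma_u,X_u(\sigma_u))\prod_{u\in A_\tau^{(n)}}\mathbf u^{1+A_u}(-d_u,X_u(d_u))$ and showing it is a martingale in $n$ with respect to the generation filtration $\H_{n+1}$. The paper's one-step input is Lemma~\ref{lemma_prod_mean}, a single-particle identity up to the first fission time proved by It\^o's formula; your one-step input is the fixed-time McKean representation obtained from uniqueness of the Cauchy problem. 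Both schemes then pass to the limit by bounded convergence (using that all lineages hit $\Gamma^{(x,\nu)}$ a.s.) and both establish the martingale property in $x$ by the special (branching) Markov property at the stopping line, exactly as in the paper's proof of Theorem~\ref{thrm_mart_prod}. Your time-indexed scheme is arguably closer to the classical Chauvin argument and avoids the bookkeeping with $L_\tau$, $D_\tau$, $A_\tau^{(n)}$; the paper's generation-indexed scheme avoids having to handle freezing at a random space-time barrier inside a fixed time interval.

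That last point is the one place where your write-up is thinner than it appears. The claim that ``the strong Markov property at time $t$ together with \eqref{plan_mckean} applied to each not-yet-stopped subtree shows that $\widetilde M_t$ is a martingale'' does not follow from the fixed-time identity alone: between times $s$ and $t$ a particle alive at time $s$ may hit the space-time barrier $\Gamma^{(x,\nu)}$ and be frozen there, so what you need for each such subtree is a stopped version of the McKean identity on $[s,t]$ --- precisely the content you are trying to prove. Closing this requires a single-particle computation with killing at the barrier (an optional-stopping/Feynman--Kac identity of the type \eqref{FK-stopping}, or the It\^o computation in the paper's Lemma~\ref{lemma_prod_mean} carried out up to the random time $\sigma_u\wedge d_u\wedge t$). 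You correctly flag this as the main technical obstacle, but it is the actual crux rather than a routine notational adaptation, and in the paper it is exactly where the It\^o's-formula argument is deployed.
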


We prove this theorem via several lemmas.

\begin{lemma}\label{lemma_prod_mean}
	Consider BBMPE starting from $x$ and with branching rate function $\mathbf g$.
	Let $\sigma$ be the first fission time and $1+A$ denote the number of offspring of the initial particle.  Let $\mathbf f(s) = \mathbf E s^{1+A}$ and $\mathbf P(A=k)=p_k$.
	Then
	\begin{equation}\label{prod_mean}
	\P_x\left(\mathbf{1}_{\{\sigma>t\}} \mathbf u(-t,B_t) + \mathbf{1}_{\{\sigma\leq t\}} \mathbf u^{1+A}(-\sigma,B_{\sigma}) \right) = \mathbf u(0,x).
	\end{equation}
\end{lemma}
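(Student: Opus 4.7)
\emph{Strategy.} The plan is to reduce \eqref{prod_mean} to a deterministic Feynman--Kac identity by first conditioning on the Brownian trajectory of the initial particle and on the offspring number $1+A$, and then verifying this identity via Itô's formula, exploiting the F-KPP equation \eqref{KPPeq2} satisfied by $\mathbf u$.

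\emph{Step 1 (conditioning).} By \eqref{KPP-P}, conditional on the Brownian path $\{B_s\}_{s\ge 0}$ the fission time $\sigma$ has hazard rate $\mathbf g(B_s)$, and $A$ is independent of $(B,\sigma)$ with $\mathbf E\, s^{1+A}=\mathbf f(s)$. First I would integrate out $A$ to replace $\mathbf u^{1+A}(-\sigma,B_\sigma)$ by $\mathbf f(\mathbf u(-\sigma,B_\sigma))$, and then integrate out $\sigma$ against its conditional density $\mathbf g(B_s)\exp(-\int_0^s\mathbf g(B_r)\mathrm{d}r)$, showing that the left-hand side of \eqref{prod_mean} equals
\begin{equation*}
\Pi_x\!\left[e^{-\int_0^t \mathbf g(B_s)\mathrm{d}s}\,\mathbf u(-t,B_t) + \int_0^t \mathbf g(B_s)\,e^{-\int_0^s \mathbf g(B_r)\mathrm{d}r}\,\mathbf f(\mathbf u(-s,B_s))\,\mathrm{d}s\right].
\end{equation*}

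\emph{Step 2 (Itô's formula).} I would then set $G(s):=\int_0^s \mathbf g(B_r)\mathrm{d}r$ and $F(s):=e^{-G(s)}\mathbf u(-s,B_s)$. By \eqref{KPPeq2}, the map $v(s,x):=\mathbf u(-s,x)$ satisfies $\partial_s v=-\tfrac12 v_{xx}-\mathbf g\,(\mathbf f(v)-v)$, so a direct application of Itô's formula gives
\begin{equation*}
\mathrm{d}F(s) = -\mathbf g(B_s)\,e^{-G(s)}\,\mathbf f(\mathbf u(-s,B_s))\,\mathrm{d}s + e^{-G(s)}\,\mathbf u_x(-s,B_s)\,\mathrm{d}B_s.
\end{equation*}
Integrating from $0$ to $t$ and taking $\Pi_x$-expectation, the drift term exactly cancels the source integral appearing in Step 1, the stochastic integral has zero mean, and what remains is $F(0)=\mathbf u(0,x)$, which is precisely \eqref{prod_mean}.

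\emph{Main obstacle.} The only substantive technicality lies in justifying the use of Itô's formula and the martingale property of the stochastic integral. Pulsating travelling waves are classical $C^{1,2}$ solutions of \eqref{KPPeq2}, and the extension to negative times via \eqref{pul_travel} inherits this regularity, so Itô's formula applies directly. Since $0\le\mathbf u\le 1$ and $\mathbf g$ is bounded by $1$-periodicity, interior parabolic estimates yield a uniform bound on $\mathbf u_x$ over $[0,t]\times\R$; a short localisation argument then shows that $\int_0^{\cdot} e^{-G(s)}\mathbf u_x(-s,B_s)\,\mathrm{d}B_s$ is a true $\Pi_x$-martingale on $[0,t]$, and the identity follows.
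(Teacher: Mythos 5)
Your proposal is correct and follows essentially the same route as the paper: condition on the Brownian path to reduce the left-hand side to $\Pi_x\bigl[e^{-\int_0^t \mathbf g(B_s)\mathrm{d}s}\mathbf u(-t,B_t) + \int_0^t \mathbf g(B_s)e^{-\int_0^s \mathbf g(B_r)\mathrm{d}r}\mathbf f(\mathbf u(-s,B_s))\mathrm{d}s\bigr]$, then use It\^o's formula together with the F-KPP equation to see that this quantity is a martingale in $t$ with initial value $\mathbf u(0,x)$. The paper packages the two terms into a single process $f(t,B_t)$ and differentiates it, whereas you differentiate only the first term and observe that its drift cancels the source integral, but this is the identical computation.
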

\begin{proof} Note that
	\begin{equation*}
	\P_x \left(\sigma>t\;|\; \{B_s:s\leq t \}\right) = e^{-\int_0^t \mathbf g(B_s) \mathrm{d}s}.
	\end{equation*}
	Put
	\begin{equation}
	f(t,B_t) = e^{-\int_0^t \mathbf g(B_s) \mathrm{d}s} \mathbf u(-t,B_t) + \int_0^t \mathbf g(B_s) e^{-\int_0^s \mathbf g(B_r) \mathrm{d}r} \sum_{k} p_k \mathbf u^{k+1} (-s,B_s) \mathrm{d}s.
	\end{equation}
	A standard computation using It\^{o}'s formula shows that
	\begin{align*}
	&\mathrm{d}f(t,B_t) \\
	=\; & e^{-\int_0^t \mathbf g(B_s) \mathrm{d}s} \frac{\partial \mathbf u(-t,B_t) }{\partial x} \mathrm{\mathrm{d}}B_t + \frac{1}{2} e^{-\int_0^t \mathbf g(B_s) \mathrm{d}s} \frac{\partial^2 \mathbf u(-t,B_t) }{\partial x^2} \mathrm{d}t
	-\mathbf g(B_t) e^{-\int_0^t \mathbf g(B_s) \mathrm{d}s} \mathbf u(-t,B_t) \mathrm{d}t\\
	& - e^{-\int_0^t \mathbf g(B_s) \mathrm{d}s} \frac{\partial \mathbf u(-t,B_t) }{\partial t} \mathrm{d}t + \mathbf g(B_t) e^{-\int_0^t \mathbf g(B_s) \mathrm{d}s} \sum_{k} p_k \mathbf u^{k+1} (-t,B_t) \mathrm{d}t\\
	= \; & e^{-\int_0^t \mathbf g(B_s) \mathrm{d}s} \frac{\partial \mathbf u(-t,B_t) }{\partial x} \mathrm{\mathrm{d}}B_t\\
	&+ e^{-\int_0^t \mathbf g(B_s) \mathrm{d}s} \left(\frac{1}{2} \frac{\partial^2 \mathbf u(-t,B_t) }{\partial x^2} - \mathbf g(B_t)\mathbf u(-t,B_t) - \frac{\partial \mathbf u(-t,B_t) }{\partial t} + \mathbf g(B_t) \mathbf f(\mathbf u(-t,B_t)) \right)\mathrm{d}t\\
	=\; & e^{-\int_0^t \mathbf g(B_s) \mathrm{d}s} \frac{\partial \mathbf u(-t,B_t) }{\partial x} \mathrm{\mathrm{d}}B_t.
	\end{align*}
	Hence $f(t,B_t)$ is a martingale and
	$$
	\Pi_x f(t,B_t) = \Pi_x f(0,B_0) = \mathbf u(0,x).
	$$
	Note that
	\begin{align}
	\P_x (\mathbf{1}_{\{\sigma>t\}} \mathbf u(-t,B_t))= &\P_x\left( \P_x\left[\mathbf{1}_{\{\sigma>t\}} \mathbf u(-t,B_t) \;|\; \{B_s: s\leq t\} \right]  \right)\\
	= &\P_x \left(\mathbf u(-t,B_t) \P_x \left[\mathbf{1}_{\{\sigma>t\}} \;|\; \{B_s: s\leq t\} \right] \right)\\
	= &\Pi_x \left( e^{-\int_0^t \mathbf g(B_s) \mathrm{d}s} \mathbf u(-t,B_t) \right).
	\end{align}
	Similarly, we have
	\begin{align*}
	\P_x \left(\mathbf{1}_{\{\sigma\leq t\}} \mathbf u^{A+1} (-\sigma,B_{\sigma}) \right) &= \P_x\left( \P_x \left[\mathbf{1}_{\{\sigma\leq t\}} \mathbf u^{A+1}(-\sigma, B_{\sigma}) \;|\; \{B_s: s\leq t\} \right]  \right)\\
	&= \Pi_x \left( \int_0^t  \mathbf g(B_s) e^{-\int_0^s \mathbf g(B_r) \mathrm{d}r} \sum_k p_k \mathbf u^{k+1}(-s,B_s) \mathrm{d}s \right).
	\end{align*}
	Therefore,
	\begin{equation*}
	\P_x\left(\mathbf{1}_{\{\sigma>t\}} \mathbf u(-t,B_t) + \mathbf{1}_{\{\sigma\leq t\}} \mathbf u^{1+A}(-\sigma,B_{\sigma})\right)
	= \Pi_x f(t,B_t) = \mathbf u(0,x).
	\end{equation*}
\end{proof}

Recall that $\Omega=\emptyset \cup \bigcup_{n \in \mathbb{N}}(\mathbb{N})^{n}$ and that, for $u\in \Omega$, $\sigma_u$ denotes the time at which $u$ hits the barrier $\Gamma^{(x,\nu)}$.
For a fixed stopping line $C(x,\nu)$, we define
\begin{align}
L_{\tau} &:= \{u\in\Omega: \; b_u\le \sigma_u < d_u \} = C(x,\nu),\\
D_{\tau} &:= \{u\in\Omega: \; \exists v\in\Omega, \; v<u, \; v\neq u, \;v\in L_{\tau} \},\\
A_{\tau}^{(n)} &:= \{u\in\Omega: \; |u|=n, \; u\notin D_{\tau}, \; u\notin L_{\tau} \}.
\end{align}
In other words, $D_{\tau}$ is the set of strict descendants of the stopping line and $A_{\tau}^{(n)}$ is the set of the $n$th generation particles $u$ such that neither $u$ nor the ancestors of $u$ hit the barrier $\Gamma^{(x,v)}$.
Let
\begin{equation*}
\H_n=
\sigma\left(\{(u,A_u,\eta_u,\left\{B_u:u\in[0,\eta_u] \right\}  )\}: |u|\leq n-1  \right).
\end{equation*}
Recall that $M_{x}(v)$ is defined by \eqref{mart_prod}.
\begin{lemma}\label{lemma_M_tau}
	For any $z<x$,
	\begin{equation*}
	\E_z (M_{x}(\nu)) =\mathbf u(0,z).
	\end{equation*}
\end{lemma}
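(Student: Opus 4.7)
The plan is to establish the identity by constructing a discrete-time martingale indexed by generation whose value at $n=0$ equals $\mathbf u(0,z)$ and whose almost-sure limit as $n\to\infty$ equals $M_x(\nu)$, and then to conclude by bounded convergence. For each $n\geq 0$ I would introduce the generation-truncated quantity
\[
M_x^{(n)} := \prod_{u\in L_\tau,\,|u|<n} \mathbf u\bigl(-\sigma_u,X_u(\sigma_u)\bigr)\cdot \prod_{u\in A_\tau^{(n)}}\mathbf u\bigl(-b_u,X_u(b_u)\bigr),
\]
so that $M_x^{(0)}=\mathbf u(0,z)$ in view of $A_\tau^{(0)}=\{\emptyset\}$, $b_\emptyset=0$ and $X_\emptyset(0)=z$.

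The first main step is the one-step identity $\E_z\!\left[M_x^{(n+1)}\mid\H_n\right]=M_x^{(n)}$. Conditional on $\H_n$ the particles in $A_\tau^{(n)}$ sit at known positions $X_u(b_u)$ at known times $b_u$, and by the branching property their post-$b_u$ evolutions are independent copies of one-particle BBMPE started at $(b_u,X_u(b_u))$. Each such $u$ either reaches the line $\Gamma^{(x,\nu)}$ before the end of its lifetime (joining $L_\tau$ and contributing a single factor $\mathbf u(-\sigma_u,X_u(\sigma_u))$ to $M_x^{(n+1)}$) or fissions first at $(d_u,X_u(d_u))$, producing $1+A_u$ children that enter $A_\tau^{(n+1)}$ and jointly contribute $\mathbf u(-d_u,X_u(d_u))^{1+A_u}$. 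Applying a time-shifted, optional-stopping variant of Lemma \ref{lemma_prod_mean} to each such $u$ yields
\[
\E\!\left[\mathbf 1_{\{\sigma_u<d_u\}}\mathbf u(-\sigma_u,X_u(\sigma_u))+\mathbf 1_{\{\sigma_u\geq d_u\}}\mathbf u^{1+A_u}(-d_u,X_u(d_u))\,\Big|\,\H_n\right]=\mathbf u(-b_u,X_u(b_u)),
\]
and conditional independence of the particles in $A_\tau^{(n)}$ lets one multiply this over $u\in A_\tau^{(n)}$ to conclude. Induction then gives $\E_z(M_x^{(n)})=\mathbf u(0,z)$ for every $n$.

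The second step is to pass to the limit. Since Section \ref{s:5.3} already records that every line of descent hits $\Gamma^{(x,\nu)}$ with probability one, the tree of not-yet-absorbed particles has no infinite branch. Being finitely branching, K\"onig's lemma forces it to be almost surely finite, so $A_\tau^{(n)}=\emptyset$ for all $n$ sufficiently large (depending on $\omega$); for such $n$, $M_x^{(n)}=M_x(\nu)$. Because every factor lies in $[0,1]$ we have $M_x^{(n)}\in[0,1]$, and bounded convergence then yields $\E_z(M_x(\nu))=\lim_n \E_z(M_x^{(n)})=\mathbf u(0,z)$.

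The main technical obstacle I foresee is the optional-stopping extension of Lemma \ref{lemma_prod_mean}: the deterministic time $t$ in \eqref{prod_mean} must be replaced by the (possibly unbounded) BM-hitting time $\sigma_u-b_u$ of $\Gamma^{(x,\nu)}$, and the initial data shifted from $(0,x)$ to $(b_u,X_u(b_u))$. The shift is harmless because the same It\^o calculation underlying the proof of Lemma \ref{lemma_prod_mean} produces a martingale relative to any initial time-position pair, and the unboundedness of the stopping time is handled by the uniform bound $\mathbf u\in[0,1]$, which keeps both surviving terms at most $1$ and makes optional stopping at $\eta_u\wedge(\sigma_u-b_u)$ immediate via dominated convergence.
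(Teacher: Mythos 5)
Your strategy coincides with the paper's: build a generation-indexed martingale out of the one-particle identity of Lemma \ref{lemma_prod_mean} (extended to the stopping time $\sigma_u\wedge d_u$), then pass to the limit by bounded convergence; indeed the paper's approximant $M^{(n)}$ is exactly your $M_x^{(n+1)}$ once its frontier factors $\mathbf u^{1+A_u}(-d_u,X_u(d_u))$ are rewritten as the product of $\mathbf u(-b_v,X_v(b_v))$ over the children $v$. There is, however, one genuine slip in your written version: with the paper's definition, $A_\tau^{(n)}$ excludes those generation-$n$ particles that themselves hit $\Gamma^{(x,\nu)}$ during their lifetime, so membership of $u$ in $A_\tau^{(n)}$ depends on $u$'s own trajectory; consequently your $M_x^{(n)}$ is not $\H_n$-measurable, the identity $\E_z[M_x^{(n+1)}\mid\H_n]=M_x^{(n)}$ cannot hold as stated, and $A_\tau^{(0)}=\{\emptyset\}$ fails on the event that the ancestor is stopped before it branches, so $M_x^{(0)}$ is not identically $\mathbf u(0,z)$. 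The repair is to index your second product by $\{u:|u|=n,\ u\notin D_\tau\}=A_\tau^{(n)}\cup\{u\in L_\tau:|u|=n\}$, which is $\H_n$-measurable and still reduces to $\{\emptyset\}$ at $n=0$; with that change your one-step computation is precisely the paper's, and your K\"onig's-lemma justification that the truncated tree is a.s.\ finite is a slightly more explicit version of the paper's assertion that $A_\tau^{(n)}\to\emptyset$.
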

\begin{proof} For $n\in\N$, similar to \cite{Chauvin91}, we introduce the following approximation of $M_{x}(\nu)$,
	\begin{equation}\label{mart_Mn}
	M^{(n)} = \prod_{u\in L_{\tau}, |u|\leq n} \mathbf u(-\sigma_u, X_u(\sigma_u)) \prod_{u\in A_{\tau}^{(n)}} \mathbf u^{1+A_u} (-d_u,X_u(d_u)).
	\end{equation}
	It is easy to see that $M^{(n)}\in \H_{n+1}, n\geq 0$.
	We first prove that $\{M^{(n)}, n\geq 0\}$ is an $\H_{n+1}$-martingale.
	\begin{align}\label{mart_Mn+1}
	\E_x (M^{(n+1)} \;|\; \H_{n+1}) &= \prod_{u\in L_{\tau}, |u|\leq n} \mathbf u(-\sigma_u, X_u(\sigma_u))\notag \\
	& \times \E_x \bigg{(} \prod_{u\in L_{\tau}, |u|=n+1} \mathbf u(-\sigma_u, X_u(\sigma_u)) \prod_{u\in A_{\tau}^{(n+1)}} \mathbf u^{1+A_u}(-d_u,X_u(d_u)) \;|\; \H_{n+1} \bigg{)}.
	\end{align}
	Note that
	\begin{equation*}
	\{u: u\in L_{\tau},\;|u| = n+1 \} \cup A_{\tau}^{(n+1)} = \{u: |u| = n+1,\; u\notin D_{\tau} \}.
	\end{equation*}
	Consider any particle $u$ such that $|u|=n+1$ and $u\notin D_{\tau}$. If $u$ is in $L_{\tau}$, it occurs in the second product in \eqref{mart_Mn+1};
	if not, it occurs in the third one. By the Markov property and the branching property, we have
	\begin{align*}
	&\;\E_x \bigg{(} \prod_{u\in L_{\tau}, |u|=n+1} \mathbf u(-\sigma_u, X_u(\sigma_u)) \prod_{u\in A_{\tau}^{(n+1)}} \mathbf u^{1+A_u}(-d_u,X_u(d_u)) \;|\; \H_{n+1} \bigg{)}\\
	=& \prod_{|u|=n+1,u\notin D_{\tau}} \E_x \left( \textbf{1}_{\{\sigma_u<d_u \}} \mathbf u(-\sigma_u, X_u(\sigma_u)) + \textbf{1}_{\{\sigma_u \geq d_u \}} \mathbf u^{1+A_u}(-d_u, X_u(d_u))  \;|\; \H_{n+1} \right)\\
	=& \prod_{|u|=n+1,u\notin D_{\tau}} \E_{X_u(b_u)} \left( \textbf{1}_{\{\sigma_u<d_u \}} \mathbf u(-\sigma_u, X_u(\sigma_u-b_u)) + \textbf{1}_{\{\sigma_u \geq d_u \}} \mathbf u^{1+A_u}(-d_u, X_u(d_u-b_u)) \right)\\
	=& \prod_{|u|=n+1,u\notin D_{\tau}} \E_{X_u(b_u)} \left( \textbf{1}_{\{\sigma_u-b_u<d_u-b_u \}} \mathbf v_u(-(\sigma_u-b_u), X_u(\sigma_u-b_u)) + \right.\\
	&\hspace{3.5cm}\left.\textbf{1}_{\{\sigma_u -b_u \geq d_u-b_u \}} \mathbf v_u^{1+A_u}(-(d_u-b_u), X_u(d_u-b_u)) \right)\\
	=& \prod_{|u|=n+1,u\notin D_{\tau}} \mathbf v_u(0, X_u(b_u)),
	\end{align*}
	where $\mathbf v_u(t,x) = \mathbf u(t-b_u,x)$ and given $b_u$, $\mathbf v_u(t,x)$ satisfies \eqref{KPPeq2} and \eqref{pul_travel}. The last equality follows from Lemma \ref{lemma_prod_mean}, where the only difference is that we substitute the random time $\sigma_u-b_u$ for the deterministic time $t$ in Lemma \ref{lemma_prod_mean}. Putting together the offspring of the same particle it becomes
	\begin{align*}
	&\prod_{|u|=n+1,u\notin D_{\tau}} \mathbf v_u(0, X_u(b_u)) = \prod_{|u|=n+1,u\notin D_{\tau}} \mathbf u(-b_u, X_u(b_u))\\
	= &\prod_{|u|=n, u\notin D_{\tau},u\notin L_{\tau}}\mathbf u^{1+A_u}(-d_u, X_u(d_u))
	= \prod_{u\in A_{\tau}^{(n)}} \mathbf u^{1+A_u}(-d_u, X_u(d_u)).
	\end{align*}
	This shows that $M^{(n)}$ is a $\H_{n+1}$-martingale.
	
	Since $A_{\tau}^{(n)} \rightarrow\emptyset$ as $n\rightarrow\infty$, $M^{(n)}\rightarrow M_x(\nu)$ almost surely. Since $0\leq \mathbf u(t,x) \leq 1$, $M^{(n)}$ is bounded. This yields the $L^1$-convergence of $M^{(n)}$ to $M_x(\nu)$. Therefore,
	\begin{equation*}
	\E_z(M_{x}(v)) = \E_z (M^{(0)}) = \mathbf u(0,z).
	\end{equation*}
	This completes the proof of this lemma.
\end{proof}

Now we turn to the proof of Theorem \ref{thrm_mart_prod}.
\begin{proof}[Proof of Theorem \ref{thrm_mart_prod}]
	Fix $\nu\ge \nu^*$.
	To distinguish the time when a particle hits  different barriers, we let $\sigma_u^x$ to denote the time when $u$ hits the barrier $\Gamma^{(x,\nu)}$. For $y>x$,
	\begin{align*}
	M_{y}(\nu) = \prod_{u\in C(y,\nu)} \mathbf u(-\sigma_u^y, X_u(\sigma_u^y))
	= \prod_{w\in C(x,\nu)} \prod_{u\in C(y,\nu), u>w}  \mathbf u(-\sigma_u^y, X_u(\sigma_u^y)).
	\end{align*}
	Therefore, by the special Markov property of $\{Z_t, t\geq 0\}$,
	we have that
	\begin{align*}
	\E_x (M_{y}(\nu) \;|\; \F_{C(x,\nu)})
	&= \prod_{w\in C(x,\nu)} \E_x\left(\prod_{u\in C(y,\nu), u>w}  \mathbf u(-\sigma_u^y, X_u(\sigma_u^y)) \;|\; \F_{C(x,\nu)}\right)\\
	&= \prod_{w\in C(x,\nu)} \E_{X_w(\sigma_w^x)} \prod_{u\in C(y,\nu), u>w} \mathbf u(-\sigma_u^y, X_u(\sigma_u^y-\sigma_w^x) )\\
	&= \prod_{w\in C(x,\nu)} \mathbf u(-\sigma_w^x, X_u(\sigma_w^x) ) = M_{x}(\nu),
	\end{align*}
	where the second to last equality follows from Lemma \ref{lemma_M_tau} and an argument similar to that
	in the proof of Lemma \ref{lemma_M_tau} by defining $\mathbf v(t,x) = \mathbf u(t-\sigma_w^x,x)$. The proof is complete.
\end{proof}
\subsection{Uniqueness in the supercritical and critical cases}

In this section, we give a probabilistic proof of the uniqueness of the pulsating travelling wave with speed $|\nu|\geq \nu^*$.

Theorem \ref{thrm_asym_super} implies that for $\nu>\nu^*$
\begin{equation}\label{asym_super_log}
-\log \mathbf u\left(\frac{y-x}{\nu}, y\right) \sim \beta e^{-\lambda x}\psi(y,\lambda) \text{ as } x\rightarrow +\infty \text{ uniformly in } y\in [0,1].
\end{equation}
Theorem \ref{thrm_asym_crit} implies that
\begin{equation}\label{asym_criti_log}
-\log \mathbf u\left(\frac{y-x}{\nu^*}, y\right) \sim \beta xe^{-\lambda^* x}\psi(y,\lambda^*) \text{ as } x\rightarrow +\infty \text{ uniformly in } y\in [0,1].
\end{equation}

Recall that $C(x,\nu)$ was defined at the beginning of Section \ref{s:5.3}.
In the spirit of \cite{Ky}, we define
\begin{equation}\label{mart_W_stop}
W_{C(x,\nu)}(\lambda) := \sum_{u\in C(x,\nu)} e^{-\lambda X_u(\sigma_u)-\gamma(\lambda) \sigma_u} \psi(X_u(\sigma_u),\lambda),
\end{equation}
where $\nu = \gamma(\lambda)/\lambda$.
Using arguments similar to those of \cite[Theorem 8]{Ky}, we can obtain the following result, whose proof is omitted.
\begin{prop}\label{thrm_W}
For any $z\in \R$,  $\{W_{C(x,\nu)}(\lambda): x\geq z\}$ is a $\P_z$-martingale with respect to the filtration $\{\F_{C(x,\nu)}: x\geq z \}$, and, as $x\to\infty$,  $W_{C(x,\nu)}(\lambda)$ converges almost surely and in $L^1(\P_z)$ to $W(\lambda,z)$ when $|\lambda|\in [0,\lambda^*)$ and $\mathbf E(L\log^+L)<\infty$.
\end{prop}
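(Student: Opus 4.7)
The plan is to mirror the proof of Theorem~\ref{thrm_mart_prod}, replacing the multiplicative weight $\mathbf u(-\sigma_u,X_u(\sigma_u))$ at each stopped particle by the additive weight $e^{-\gamma(\lambda)\sigma_u-\lambda X_u(\sigma_u)}\psi(X_u(\sigma_u),\lambda)$, and then to transfer the $L^1$-convergence of $W_t(\lambda)$ from \cite[Theorem~1.1]{RSYa} to the stopping line via a doubly-indexed stopped sum.

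For the martingale property, the additive analogue of Lemma~\ref{lemma_prod_mean} is the one-particle identity
\begin{equation*}
\E_x\Big[\mathbf{1}_{\{\sigma>t\}}e^{-\gamma(\lambda)t-\lambda B_t}\psi(B_t,\lambda)+\mathbf{1}_{\{\sigma\leq t\}}(1+A)e^{-\gamma(\lambda)\sigma-\lambda B_\sigma}\psi(B_\sigma,\lambda)\Big]=e^{-\lambda x}\psi(x,\lambda),
\end{equation*}
which I would prove by applying It\^o's formula to the single-particle weight together with the eigenvalue equation \eqref{eigen}, exactly as in the proof of Lemma~\ref{lemma_prod_mean}. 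The generation-truncated approximation
\begin{equation*}
W^{(n)}:=\sum_{u\in L_\tau,\,|u|\leq n}e^{-\gamma(\lambda)\sigma_u-\lambda X_u(\sigma_u)}\psi(X_u(\sigma_u),\lambda)+\sum_{u\in A_\tau^{(n)}}(1+A_u)\,e^{-\gamma(\lambda)d_u-\lambda X_u(d_u)}\psi(X_u(d_u),\lambda)
\end{equation*}
is then an $\H_{n+1}$-martingale by the same computation as in Lemma~\ref{lemma_M_tau}; rerunning the special-Markov argument of Theorem~\ref{thrm_mart_prod} on $y>x$ yields $\E_z[W_{C(y,\nu)}(\lambda)\mid\F_{C(x,\nu)}]=W_{C(x,\nu)}(\lambda)$, with constant expectation $e^{-\lambda z}\psi(z,\lambda)$. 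In particular $\{W_{C(x,\nu)}(\lambda)\}$ is a non-negative martingale and converges $\P_z$-a.s.\ to some limit $W_\infty$.

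To identify $W_\infty=W(\lambda,z)$ and upgrade the convergence to $L^1$, I would introduce the doubly-stopped sum
\begin{equation*}
W^{(x,\nu)}_t(\lambda):=\sum_{u\in N_t,\,\sigma_u>t}e^{-\gamma(\lambda)t-\lambda X_u(t)}\psi(X_u(t),\lambda)+\sum_{u\in C(x,\nu),\,\sigma_u\leq t}e^{-\gamma(\lambda)\sigma_u-\lambda X_u(\sigma_u)}\psi(X_u(\sigma_u),\lambda),
\end{equation*}
a non-negative $(\F_t)$-martingale for each fixed $x$ that interpolates $W_t(\lambda)$ (as $x\to\infty$) and $W_{C(x,\nu)}(\lambda)$ (as $t\to\infty$, using that every line of descent eventually crosses $\Gamma^{(x,\nu)}$). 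From $\E_z[W_{C(x,\nu)}(\lambda)\mid\F_t]=W^{(x,\nu)}_t(\lambda)$, sending $x\to\infty$ and applying Fatou give $\E_z[W_\infty\mid\F_t]\leq W_t(\lambda)$, and then $t\to\infty$ combined with the $L^1$-convergence $W_t(\lambda)\to W(\lambda,z)$ forces $W_\infty\leq W(\lambda,z)$ a.s.; matching constant expectations promotes the inequality to a.s.\ equality, and Scheff\'e's lemma finally turns the a.s.\ convergence into $L^1$ convergence.

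The main delicate point will be this no-mass-loss step: confirming $\E_z[W_{C(x,\nu)}(\lambda)]=e^{-\lambda z}\psi(z,\lambda)$ (equivalently, that $W^{(n)}\to W_{C(x,\nu)}(\lambda)$ in $L^1$ and that no expectation is lost in the Fatou step) relies on the $\mathbf E(L\log^+L)<\infty$ hypothesis, the very criterion that also governs the $L^1$-convergence of $W_t(\lambda)$; without it one could only conclude $\E_z[W_\infty]\le e^{-\lambda z}\psi(z,\lambda)$ with possibly strict inequality, and the martingale would collapse to $W(\lambda,z)=0$.
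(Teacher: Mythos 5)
Your construction of the stopping-line martingale via the generation-truncated sum $W^{(n)}$ is workable and parallels Lemma \ref{lemma_M_tau}, with one caveat: replacing the deterministic time by the barrier-hitting time in the one-particle identity now requires an integrability argument rather than boundedness by $1$. It does go through, because for $|\lambda|<\lambda^*$ one has $\nu=\gamma(\lambda)/\lambda>\gamma'(\lambda)$ by \eqref{lemma_com(2)}, so after changing measure with $\Xi_t(\lambda)$ the "still alive and not yet absorbed" term vanishes in $L^1$. Note also that, contrary to your closing paragraph, the mean preservation $\E_z W_{C(x,\nu)}(\lambda)=e^{-\lambda z}\psi(z,\lambda)$ does \emph{not} use $\mathbf E(L\log^+L)<\infty$; that hypothesis enters only through the uniform integrability of $W_t(\lambda)$.

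The genuine gap is in the identification of the limit. From $\E_z[W_{C(x,\nu)}(\lambda)\mid\F_t]=W_t^{(x,\nu)}(\lambda)$ you send $x\to\infty$ with Fatou and get $\E_z[W_\infty\mid\F_t]\le W_t(\lambda)$, hence $W_\infty\le W(\lambda,z)$ a.s. To upgrade this to equality you invoke "matching constant expectations", but you do not know $\E_z W_\infty=e^{-\lambda z}\psi(z,\lambda)$: Fatou applied to the $x$-limit only gives $\E_z W_\infty\le e^{-\lambda z}\psi(z,\lambda)=\E_z W(\lambda,z)$, which is perfectly consistent with $W_\infty<W(\lambda,z)$ on a set of positive probability. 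Establishing $\E_z W_\infty=\E_z W(\lambda,z)$ is exactly the uniform integrability in $x$ that the proposition asserts, so the argument is circular at this point. The standard fix is to condition in the opposite direction: prove $\E_z[W_t(\lambda)\mid\F_{C(x,\nu)}]=W_{t\wedge C(x,\nu)}(\lambda)$ and let $t\to\infty$, using the $L^1$-convergence of $W_t(\lambda)$ to $W(\lambda,z)$ (this is where $|\lambda|<\lambda^*$ and $\mathbf E(L\log^+L)<\infty$ are used) to conclude $W_{C(x,\nu)}(\lambda)=\E_z[W(\lambda,z)\mid\F_{C(x,\nu)}]$. The martingale property, uniform integrability, and a.s.\ and $L^1$ convergence to $W(\lambda,z)$ then all follow from L\'evy's upward theorem, once one checks that $W(\lambda,z)$ is measurable with respect to $\sigma\left(\bigcup_{x\ge z}\F_{C(x,\nu)}\right)$. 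This is precisely the route the paper takes for the critical analogue, Proposition \ref{lemma_martV}, and is the argument of \cite[Theorem 8]{Ky} that the paper cites for the omitted proof.
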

Let $Y_x = \sum_{u\in C(x,\nu)} \delta_{\{X_u(\sigma_u) \}}$ where $\{x\}$ is the fractional part of $x$. Then $Y_x$ is a point measure on $[0,1]$.
Notice that
\begin{align*}
W_{C(x,\nu)}(\lambda) &= \sum_{u\in C(x,\nu)} e^{-\lambda (X_u(\sigma_u) + \nu \sigma_u)} \psi(X_u(\sigma_u),\lambda) \\
&= \sum_{u\in C(x,\nu)} e^{-\lambda x} \psi(X_u(\sigma_u),\lambda) = e^{-\lambda x} \langle Y_x, \psi \rangle.
\end{align*}
Thus by Proposition \ref{thrm_W}, we have
\begin{equation}\label{limit-Y-psi}
e^{-\lambda x} \langle Y_x, \psi \rangle \overset{\P_z\mbox{-a.s.}}{\longrightarrow} W(\lambda,z).
\end{equation}

\begin{thrm}\label{uniq-super}
Suppose $|\nu|>\nu^*$ and $\mathbf E(L\log^+L) < \infty$. If $\mathbf u(t,x)$ is a pulsating travelling wave with speed $\nu$,
then there exists $\beta>0$ such that
\begin{equation}
\label{def-u-supercritical2}
\mathbf u(t,x)= \E_x \exp\left\{-\beta e^{\gamma(\lambda)t} W(\lambda,x) \right\},
\end{equation}
where $|\lambda| \in (0,\lambda^*)$ is such that
$\nu = \frac{\gamma(\lambda)}{\lambda}$.
\end{thrm}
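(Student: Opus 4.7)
The plan is to adapt Kyprianou's strategy from \cite{Ky} by combining the product martingale along a stopping line from Theorem~\ref{thrm_mart_prod} with the sharp asymptotic of Theorem~\ref{thrm_asym_super} and the additive martingale limit from Proposition~\ref{thrm_W}. I treat the case $\nu>\nu^*$ (so $\lambda\in(0,\lambda^*)$); the case $\nu<-\nu^*$ is symmetric. The central object is the time-shifted product martingale
\[
M_x^{(t)}(\nu) \;:=\; \prod_{u\in C(x,\nu)} \mathbf u\bigl(t-\sigma_u,\, X_u(\sigma_u)\bigr),\qquad x\ge z,
\]
and the claim is that, for each $t\in\R$, this is a bounded $\P_z$-martingale in $x$ with $\E_z[M_x^{(t)}(\nu)]=\mathbf u(t,z)$ whose almost sure limit as $x\to\infty$ equals $\exp\!\bigl\{-\beta\, e^{\gamma(\lambda)t}\,W(\lambda,z)\bigr\}$.

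The martingale property of $M_x^{(t)}(\nu)$ is a routine modification of Lemma~\ref{lemma_prod_mean}, Lemma~\ref{lemma_M_tau}, and Theorem~\ref{thrm_mart_prod}: the It\^o computation used there depends only on $\mathbf u$ solving the KPP equation at \emph{every} time argument, which is equally true for $(s,y)\mapsto \mathbf u(t-s,y)$. To identify the limit, recall that $u\in C(x,\nu)$ satisfies $\sigma_u=(x-X_u(\sigma_u))/\nu$. Writing $y_u:=\{X_u(\sigma_u)\}\in[0,1)$ and iterating the pulsating identity $\mathbf u(s,y)=\mathbf u(s-k/\nu,\,y-k)$ with $k=\lfloor X_u(\sigma_u)\rfloor$ yields
\[
\mathbf u\bigl(t-\sigma_u,X_u(\sigma_u)\bigr) \;=\; \mathbf u\!\left(\frac{y_u-(x-\nu t)}{\nu},\, y_u\right).
\]
Since $x-\nu t\to\infty$ as $x\to\infty$, Theorem~\ref{thrm_asym_super} combined with $-\log(1-z)=z+O(z^2)$ and $\lambda\nu=\gamma(\lambda)$ gives, uniformly in $y_u\in[0,1]$,
\[
-\log\mathbf u\bigl(t-\sigma_u,X_u(\sigma_u)\bigr) \;=\; \beta\,e^{\gamma(\lambda)t}\,e^{-\lambda x}\,\psi(y_u,\lambda)\,\bigl(1+o(1)\bigr).
\]
Summing over $u\in C(x,\nu)$, invoking $1$-periodicity of $\psi(\cdot,\lambda)$ and the identity $W_{C(x,\nu)}(\lambda)=e^{-\lambda x}\sum_{u\in C(x,\nu)}\psi(X_u(\sigma_u),\lambda)$ derived just before this theorem, one obtains
\[
-\log M_x^{(t)}(\nu) \;=\; \beta\,e^{\gamma(\lambda)t}\,W_{C(x,\nu)}(\lambda)\,\bigl(1+o(1)\bigr).
\]

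By Proposition~\ref{thrm_W}, $W_{C(x,\nu)}(\lambda)\to W(\lambda,z)$ $\P_z$-a.s.\ (and in $L^1$), hence $M_x^{(t)}(\nu)\to \exp\{-\beta e^{\gamma(\lambda)t}W(\lambda,z)\}$ $\P_z$-a.s. Because $0\le M_x^{(t)}(\nu)\le 1$, dominated convergence together with the martingale identity $\mathbf u(t,z)=\E_z[M_x^{(t)}(\nu)]$ delivers the representation \eqref{def-u-supercritical2}, with $\beta>0$ inherited from Theorem~\ref{thrm_asym_super}. I expect the main obstacle to be verifying that the uniformity of the remainder in Theorem~\ref{thrm_asym_super} really survives the summation over $C(x,\nu)$: the cardinality of $C(x,\nu)$ typically grows like $e^{\lambda x}$, so the error must be controlled \emph{multiplicatively} inside each factor, and the quadratic Taylor correction $O(z_u^2)$ must be shown negligible via the crude bound $\sum_u z_u^2 \le (\max_u z_u)\sum_u z_u$ together with the uniform decay $\max_u z_u = O(e^{-\lambda(x-\nu t)})$, which is ultimately what makes the $(1+o(1))$ factor above legitimate.
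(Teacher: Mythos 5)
Your proposal is correct and its core is the same as the paper's: the product martingale along the stopping line $C(x,\nu)$ (Theorem \ref{thrm_mart_prod}), the uniform asymptotic of Theorem \ref{thrm_asym_super} applied to $-\log\mathbf u$ rewritten via the pulsating identity, and the a.s.\ convergence $e^{-\lambda x}\langle Y_x,\psi\rangle\to W(\lambda,z)$ from Proposition \ref{thrm_W}, followed by dominated convergence. The one genuine difference is how you pass from a single time to all $t$. The paper runs the argument only at $t=0$, obtaining $\mathbf u(0,z)=\E_z e^{-\beta W(\lambda,z)}$, and then invokes the PDE: by \cite[Theorem 1.3(i)]{RSYa} the right-hand side of \eqref{def-u-supercritical2} solves the F-KPP initial value problem with that same initial datum, so uniqueness of the Cauchy problem yields the identity for all $t$. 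You instead time-shift the product martingale to $M_x^{(t)}(\nu)=\prod_{u}\mathbf u(t-\sigma_u,X_u(\sigma_u))$ and redo the limit for each $t$; this is legitimate (the paper itself uses time-shifted waves $\mathbf v_u(s,y)=\mathbf u(s-b_u,y)$ inside Lemma \ref{lemma_M_tau}, and $\mathbf u$ is defined on all of $\R\times\R$), and your algebra $\mathbf u(t-\sigma_u,X_u(\sigma_u))=\mathbf u\bigl(\tfrac{y_u-(x-\nu t)}{\nu},y_u\bigr)$ together with $e^{-\lambda(x-\nu t)}=e^{\gamma(\lambda)t}e^{-\lambda x}$ is exactly right. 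What your route buys is independence from the well-posedness result for the Cauchy problem; what it costs is re-verifying the martingale property for the shifted product, which is indeed routine. Your concern about the $(1+o(1))$ surviving the sum is resolved exactly as you say: the uniformity in $y\in[0,1]$ in Theorem \ref{thrm_asym_super} gives a multiplicative error in each factor, so $-\log M_x^{(t)}(\nu)=\beta e^{\gamma(\lambda)t}W_{C(x,\nu)}(\lambda)(1+o(1))$ with a deterministic $o(1)$, and $W_{C(x,\nu)}(\lambda)$ has a finite a.s.\ limit.
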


\begin{proof}
We assume that $\lambda\geq 0$.  The case $\lambda < 0$ can be analyzed by symmetry.
By Theorem \ref{thrm_mart_prod}, $M_{x}(\nu)$ is a $\P_z$-martingale with respecct to $\{\F_{C(x,\nu)}: x\geq z\}$ with expectation $\mathbf u(0,z)$ and converges almost surely and in $L^1(\P_z)$, where
 $$M_{x}(\nu) = \exp\left\{\sum_{u\in C(x,\nu)} \log \mathbf u(-\sigma_u, X_u(\sigma_u)) \right\}.$$ So there exists a non-negative random variable $Y$ such that
\begin{equation*}
-\sum_{u\in C(x,\nu)} \log \mathbf u(-\sigma_u, X_u(\sigma_u)) \overset{\P_z\mbox{-a.s.}}{\longrightarrow} Y.
\end{equation*}
Note that
\begin{align*}
&X_u(\sigma_u) = \{ X_u(\sigma_u)\} + \lfloor X_u(\sigma_u)\rfloor ,\\
&-\sigma_u = \frac{X_u(\sigma_u)-x}{\nu} = \frac{\{ X_u(\sigma_u)\}-x}{\nu} + \frac{\lfloor X_u(\sigma_u)\rfloor}{\nu}.
\end{align*}
The previous convergence can be written as
\begin{equation*}
\left\langle Y_x, \;- \log \mathbf u\left(\frac{\cdot-x}{\nu}, \cdot\right)\right\rangle \overset{\P_z\mbox{-a.s.}}{\longrightarrow} Y,\quad \mbox{as }x\to\infty.
\end{equation*}
By \eqref{asym_super_log},
$$
\lim_{x\to\infty}\left\langle Y_x, - \log \mathbf u\left(\frac{\cdot-x}{\nu}, \cdot\right)\right\rangle=\beta \lim_{x\to\infty}\langle e^{-\lambda x} Y_x, \psi(\cdot,\lambda)\rangle,
$$
and thus
$Y=\beta W(\lambda,z)$.
By the dominated convergence theorem
\begin{align*}
\mathbf u(0,z) &= \lim_{x\rightarrow\infty} \E_z e^{\left\langle Y_x, \; \log \mathbf u\left(\frac{\cdot-x}{\nu}, \cdot\right)\right\rangle} = \E_z \lim_{x\rightarrow\infty} e^{\left\langle Y_x, \; \log u\left(\frac{\cdot-x}{\nu}, \cdot\right)\right\rangle}\\
&= \E_z e^{-Y} =  \E_z e^{-\beta W(\lambda,z)}.
\end{align*}
\cite[Theorem 1.3(i)]{RSYa} shows that $\E_x  \exp\{-\beta e^{\gamma(\lambda)t} W(\lambda,x)\}$,  as a function of $(t,x)$, is a solution of the following initial value problem:
\begin{align*}
\frac{\partial \mathbf u}{\partial t} = \frac{1}{2} \frac{\partial^2 \mathbf u}{\partial x^2} +
\mathbf g\cdot(\mathbf f(\mathbf u)-\mathbf u),
\quad \mathbf u(0,x) = \E_x e^{-\beta W(\lambda,x)}.
\end{align*}
Therefore, $\mathbf u(t,x)$ and $\E_x  \exp\{-\beta e^{\gamma(\lambda)t} W(\lambda,x)\}$ are solutions of the above initial value problem.
The uniqueness of solutions of  initial value problem implies \eqref{def-u-supercritical2} holds.
\end{proof}

Now we consider the critical case.
Recall that on the space-time half plane $\{(y,t): \, y\in\R, \, t\in\R^+  \}$, the barrier $\Gamma^{(x,\nu)}$ is described by the line $y+\nu t=x$ for $x>0$ and $C(x,\nu)$ is the random collection of particles stopped at the barrier. We have also defined the barrier
 $\mathbf{\Gamma}^{(-x,\lambda)}$ described by $y=h^{-1}(-x-\gamma'(\lambda)t)$ and $\mathbf{C}(-x,\lambda)$ is defined as the random collection of particles hitting this barrier.

Define $\widetilde{C}(z,\nu^*)$ to be the set of particles
that stopped the barrier $\Gamma^{(z,\nu^*)}$ before meeting the barrier
$\mathbf{\Gamma}^{(-x,\lambda^*)}$.
Fix $x>0$ and $y>h^{-1}(-x)$.
Define $Y_z^{(-x,\lambda^*)} = \sum_{u\in \widetilde{C}(z,\nu^*)} \delta_{\{X_u(\sigma_u) \}}$, $z\geq y$.
Consider the sequence $\{V_{\widetilde{C}(z,v^*)}^x, z\geq y\}$, where
\begin{align*}
&V_{\widetilde{C}(z,\nu^*)}^x \\
:=& \sum_{u\in \widetilde{C}(z,\nu^*)} e^{-\gamma(\lambda^*)\sigma_u - \lambda^* X_u(\sigma_u)} \psi(X_u(\sigma_u),\lambda^*) \left(x + \gamma'(\lambda^*)\sigma_u + X_u(\sigma_u) - \frac{\psi_{\lambda}(X_u(\sigma_u),\lambda^*)}{\psi(X_u(\sigma_u),\lambda^*)} \right)\\
=& \sum_{u\in \widetilde{C}(z,\nu^*)} e^{- \lambda^* z} \psi(X_u(\sigma_u),\lambda^*) \left(x + z - \frac{\psi_{\lambda}(X_u(\sigma_u),\lambda^*)}{\psi(X_u(\sigma_u),\lambda^*)} \right)\\
=& \left\langle Y_z^{(-x,\lambda^*)}, \, e^{- \lambda^* z} \psi(\cdot,\lambda^*) \left(x + z - \frac{\psi_{\lambda}(\cdot,\lambda^*)}{\psi(\cdot,\lambda^*)} \right) \right\rangle,\quad
z\geq y.
\end{align*}
Using similar arguments as in \cite[Theorem 15]{Ky}, we have following proposition.

\begin{prop}\label{lemma_martV}
 	Let $\{\mathcal{F}_{\widetilde{C}(z,\nu^*)}: z\geq y \}$ be the natural filtration describing everything in the truncated branching tree up to the barrier $\Gamma^{(z,\nu^*)}$. If $\mathbf E(L(\log^+L)^2)<+\infty$, then $\{V_{\widetilde{C}(z,\nu^*)}^x, z\geq y\}$ is a $\P_y$-martingale with respect to $\{\F_{\widetilde{C}(z,\nu^*)}, z\geq y\}$, and $V_{\widetilde{C}(z,\nu^*)}^x$  converges $\P_y$-almost surely and in $L^1(\P_y)$ to $V^x(\lambda^*)$ as $z\to\infty$.
\end{prop}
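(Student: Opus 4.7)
The plan is to adapt the strategy of \cite[Theorem 15]{Ky} to the periodic environment, the key inputs being the single-particle martingale $\Lambda_t^{(x,\lambda^*)}$ from \cite[Lemma 2.11]{RSYa} and its spine representation as a time-changed Bessel-3 process given at the end of Subsection \ref{ss:mcBB}. I would organise the argument into three steps: the martingale property, almost sure convergence, and uniform integrability (which yields the $L^1$ convergence).

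For the martingale property, observe that each summand in $V_{\widetilde C(z,\nu^*)}^x$ is precisely $\Lambda^{(x,\lambda^*)}_{\sigma_u}/\Lambda_0^{(x,\lambda^*)}$ evaluated along the trajectory of the particle $u\in\widetilde C(z,\nu^*)$, the factor $m\int_0^{\sigma_u}\mathbf g(X_u(s))\mathrm{d}s$ in $\Lambda$ being exactly the compensator of the branching clock along $u$'s line of descent (a many-to-one calculation). Combining optional stopping for $\Lambda^{(x,\lambda^*)}$ at the time $\sigma_u\wedge\tau^x_{\lambda^*}$, the identity $\gamma'(\lambda^*)=\nu^*$, the branching property, and a generation-by-generation truncation in the spirit of Theorem \ref{thrm_mart_prod}, yields the $\F_{\widetilde C(z,\nu^*)}$-martingale property. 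Since particles are removed upon crossing $\mathbf{\Gamma}^{(-x,\lambda^*)}$, the factor $x+\gamma'(\lambda^*)\sigma_u+h(X_u(\sigma_u))$ is strictly positive on $\widetilde C(z,\nu^*)$, so $V_{\widetilde C(z,\nu^*)}^x$ is a non-negative martingale and converges $\P_y$-almost surely to a finite limit, which I call $V^x(\lambda^*)$.

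The main work is upgrading this to $L^1$ convergence. I would introduce the tilted measure $\mathrm{d}\widetilde{\P}_y/\mathrm{d}\P_y|_{\F_{\widetilde C(z,\nu^*)}} = V_{\widetilde C(z,\nu^*)}^x/V_{\widetilde C(y,\nu^*)}^x$. Under $\widetilde{\P}_y$, a spine decomposition identifies a distinguished ancestral line along which, in the $h$-scale, the motion behaves as a Bessel-3 process started at $x+h(y)$; the spine fissions at an accelerated rate, the offspring count at each spine fission is size-biased by $(1+L)$, and all off-spine subtrees evolve as independent copies of the original BBMPE. By the standard Durrett--Lyons criterion, uniform integrability of $V_{\widetilde C(z,\nu^*)}^x$ under $\P_y$ is equivalent to $\widetilde{\P}_y\bigl(\liminf_{z\to\infty}V_{\widetilde C(z,\nu^*)}^x<\infty\bigr)=1$, which via the spine decomposition reduces to almost sure finiteness under $\widetilde{\P}_y$ of a sum, indexed by spine fission events, of contributions from the size-biased off-spine subtrees.

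The main obstacle is controlling this sum under the hypothesis $\mathbf E(L(\log^+L)^2)<\infty$. Exactly as in Step 7 of the proof of Theorem \ref{thrm_asym_crit}, the Bessel-3 growth along the spine (so that $h(\text{spine})+\gamma'(\lambda^*)t$ grows like $\sqrt{t}$) combined with the equivalence \eqref{A_LlogL2} between the $\mathbf E(L(\log^+L)^2)$ moment condition and the integrability of $A(s)|\log s|/s$ near $0$ provide the decisive quantitative bound: a conditional first-moment calculation under $\widetilde{\P}_y$ of the summed off-spine contributions along the spine, together with a Borel--Cantelli argument along the Bessel-3 trajectory, yields $\widetilde{\P}_y$-almost sure finiteness of the sum. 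This gives the desired uniform integrability and hence the $L^1$ convergence of $V_{\widetilde C(z,\nu^*)}^x$ to $V^x(\lambda^*)$.
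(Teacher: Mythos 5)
Your outline is workable but takes a genuinely different, and much heavier, route than the paper. The paper does not re-prove $L^1$ convergence at all: it defines the time-truncation $V^x_{t\wedge\widetilde{C}(z,\nu^*)}$ and establishes the projection identity $\E_y\bigl(V_t^x(\lambda^*)\,\big|\,\F_{\widetilde{C}(z,\nu^*)}\bigr)=V^x_{t\wedge\widetilde{C}(z,\nu^*)}$, lets $t\to\infty$ to exhibit $V^x_{\widetilde{C}(z,\nu^*)}$ as $\E_y\bigl(V^x(\lambda^*)\,\big|\,\F_{\widetilde{C}(z,\nu^*)}\bigr)$ (using that $V_t^x(\lambda^*)\to V^x(\lambda^*)$ in $L^1$, which is already Theorem 4.2 of the prequel under $\mathbf E(L(\log^+L)^2)<\infty$), and then observes that $V^x(\lambda^*)$ is $\F_\infty$-measurable, so everything --- martingale property, a.s.\ convergence, $L^1$ convergence, and identification of the limit --- follows at once from L\'evy's upward theorem for the closed martingale $\E_y(V^x(\lambda^*)\,|\,\F_{\widetilde{C}(z,\nu^*)})$. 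Your plan instead proves the martingale property by hand (optional stopping for $\Lambda^{(x,\lambda^*)}$ plus a generation-by-generation truncation) and then re-derives uniform integrability from scratch via a spine decomposition under the tilted measure, a Bessel-3 growth estimate, and the $L(\log L)^2$ criterion. That is essentially a reproof, in the stopping-line parametrization, of the $L^1$-convergence theorem the paper simply imports; it can be made to work, but it is a large amount of technical machinery (construction of the size-biased tree on stopping lines, the conditional first-moment estimate for the off-spine sum, Borel--Cantelli along the Bessel path) that the paper's projection argument avoids entirely.

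There is one genuine gap in your version as stated: you obtain a.s.\ and $L^1$ convergence of $V^x_{\widetilde{C}(z,\nu^*)}$ to \emph{some} limit and then simply name it $V^x(\lambda^*)$, but the proposition (and its use in Theorem \ref{uniq-critical}, where one needs $V^x(\lambda^*)=\partial W(\lambda^*,y)$ on the non-extinction-of-the-barrier event) requires the limit to be the specific random variable $V^x(\lambda^*)$ already defined as the $t\to\infty$ limit of $V_t^x(\lambda^*)$. Your argument never connects the stopping-line limit to the time-parametrized limit. This identification is exactly what the paper's projection identity delivers for free; in your approach you would still need to prove something like $\E_y\bigl(V^x(\lambda^*)\,|\,\F_{\widetilde{C}(z,\nu^*)}\bigr)=V^x_{\widetilde{C}(z,\nu^*)}$ together with $\F_\infty$-measurability of $V^x(\lambda^*)$ --- at which point you have reproduced the paper's proof and the spine machinery becomes redundant.
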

\begin{proof} For $t>0$, let
\begin{equation*}
\widetilde{C}_t(z,\nu^*) = \{u\in \widetilde{C}(z,\nu^*): \sigma_u \leq t \}
\end{equation*}
and
\begin{equation*}
\widetilde A_t(z,\nu^*)
= \{u\in \widetilde{N}_t^x: \, v\notin \widetilde{C}_t(z,\nu^*), \, \forall v\leq u \}.
\end{equation*}
Define
\begin{align*}
V_{t\wedge\widetilde{C}(z,\nu^*)}^x := &\sum_{u\in\widetilde A_t(z,\nu^*)} e^{-\gamma(\lambda^*)t - \lambda^* X_u(t)} \psi(X_u(t),\lambda^*) \left(x + \gamma'(\lambda^*)t + X_u(t) - \frac{\psi_{\lambda}(X_u(t),\lambda^*)}{\psi(X_u(t),\lambda^*)} \right) \\
&\,+ \sum_{u\in \widetilde{C}_t(z,\nu^*)} e^{- \lambda^* z} \psi(X_u(\sigma_u),\lambda^*) \left(x + z - \frac{\psi_{\lambda}(X_u(\sigma_u),\lambda^*)}{\psi(X_u(\sigma_u),\lambda^*)} \right).
\end{align*}
A straightforward calculation, similar to the proof of
\cite[Lemma 2.16]{RSYa},
shows that
\begin{equation}\label{mart_V_stopping}
\E_y (V_t^x(\lambda^*) | \mathcal{F}_{\widetilde{C}(z,\nu^*)})  = V_{t\wedge\widetilde{C}(z,\nu^*)}^x.
\end{equation}
Since $\lim_{t\uparrow\infty} |\widetilde A_t(z,\nu^*)| = 0$ and $\lim_{t\uparrow\infty} \widetilde{C}_t(z,\nu^*) = \widetilde{C}(z,\nu^*)$. Letting $t\to\infty$ in \eqref{mart_V_stopping}, we have
\begin{equation*}
\lim_{t\uparrow\infty} \E_y (V_t^x(\lambda^*) | \mathcal{F}_{\widetilde{C}(z,\nu^*)}) =  V_{\widetilde{C}(z,\nu^*)}^x.
\end{equation*}
By \cite[Theorem 4.2]{RSYa},
$V_t^x(\lambda^*)$ converges  to $V^x(\lambda^*)$  in $L^1(\P_y)$ as $t\to\infty$.
Thus $\E_y (V_t^x(\lambda^*) | \mathcal{F}_{\widetilde{C}(z,\nu^*)})$ converges to $\E_y (V^x(\lambda^*) | \mathcal{F}_{\widetilde{C}(z,\nu^*)})$ in $L^1(\P_y)$.
So $\E_y (V^x(\lambda^*) | \mathcal{F}_{\widetilde{C}(z,\nu^*)}) = V_{\widetilde{C}(z,\nu^*)}^x$.

Letting $z\to\infty$  in \eqref{mart_V_stopping}, we get $\E_y (V_t^x(\lambda^*) | \mathcal{F}_{\infty}) = V_t^x(\lambda^*)$ where $\F_{\infty} = \sigma(\cup_{z\geq y} \mathcal{F}_{\widetilde{C}(z,\nu^*)} )$. This implies that $V^x(\lambda^*)$ is $\F_{\infty}$-measurable. Hence
\begin{equation*}
 V_{\widetilde{C}(z,\nu^*)}^x = \E_y \left(V^x(\lambda^*) | \mathcal{F}_{\widetilde{C}(z,\nu^*)}\right) \overset{L^1(\P_y)/a.s.}{\longrightarrow} \E_y \left(V^x(\lambda^*) \Big| \F_{\infty}\right) = V^x(\lambda^*).
\end{equation*}
This completes the proof.
\end{proof}

\begin{thrm}\label{uniq-critical} Suppose
$\mathbf E(L(\log^+L)^2) < \infty$. If $\mathbf u(t,x)$ is a pulsating travelling wave with speed $\nu^*$,
then there exists $\beta>0$ such that
\begin{equation}\label{def-u-critical}
\mathbf u(t,x)= \E_x \exp\left\{-\beta e^{\gamma(\lambda^*)t} \partial W(\lambda^*,x) \right\}.
\end{equation}
\end{thrm}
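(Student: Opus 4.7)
The plan is to mirror the proof of Theorem \ref{uniq-super}, substituting the critical asymptotic of Theorem \ref{thrm_asym_crit} for Theorem \ref{thrm_asym_super} and the mixed-barrier martingale of Proposition \ref{lemma_martV} for the additive stopping-line martingale. As in the supercritical case it suffices to establish
$$\mathbf u(0,x)=\E_x\exp\bigl\{-\beta\,\partial W(\lambda^*,x)\bigr\};$$
the formula for general $t$ then follows from \cite[Theorem 1.3(ii)]{RSYa} by uniqueness for the initial value problem associated with \eqref{KPPeq2}.

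By Theorem \ref{thrm_mart_prod}, $\{M_z(\nu^*)\}_{z\geq x}$ is an $L^1(\P_x)$-bounded martingale with mean $\mathbf u(0,x)$, so $-\log M_z(\nu^*)$ converges $\P_x$-a.s.\ to some $Y\geq 0$ with $\mathbf u(0,x)=\E_x e^{-Y}$. For each $u\in C(z,\nu^*)$ the pulsating identity combined with $X_u(\sigma_u)+\nu^*\sigma_u=z$ gives $\mathbf u(-\sigma_u,X_u(\sigma_u))=\mathbf u\bigl((\{X_u(\sigma_u)\}-z)/\nu^*,\{X_u(\sigma_u)\}\bigr)$, so Theorem \ref{thrm_asym_crit} yields uniformly in $u$
$$-\log\mathbf u(-\sigma_u,X_u(\sigma_u))=\beta\,z\,e^{-\lambda^*z}\,\psi(X_u(\sigma_u),\lambda^*)\,(1+o(1))\quad\text{as }z\to\infty.$$
Summing over $u\in C(z,\nu^*)$ and using \eqref{mart_W_stop} gives $-\log M_z(\nu^*)=\beta\,z\,W_{C(z,\nu^*)}(\lambda^*)(1+o(1))$ a.s.

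Using $z=\nu^*\sigma_u+X_u(\sigma_u)$ and $h(y)=y-\psi_{\lambda}(y,\lambda^*)/\psi(y,\lambda^*)$, a direct algebraic manipulation of \eqref{mart_add} and \eqref{mart_deriv} gives
$$z\,W_{C(z,\nu^*)}(\lambda^*)=\partial W_{C(z,\nu^*)}(\lambda^*)+e^{-\lambda^*z}\sum_{u\in C(z,\nu^*)}\psi_{\lambda}(X_u(\sigma_u),\lambda^*),$$
where $\partial W_{C(z,\nu^*)}(\lambda^*)$ is the stopping-line analogue of \eqref{mart_deriv}. The last sum is at most $\|\psi_{\lambda}/\psi\|_{\infty}\,W_{C(z,\nu^*)}(\lambda^*)$ in absolute value and tends to $0$ a.s.\ since $W(\lambda^*,x)=0$ a.s.\ by \cite[Theorem 1.1]{RSYa}. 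It therefore remains to prove $\partial W_{C(z,\nu^*)}(\lambda^*)\to\partial W(\lambda^*,x)$ a.s.\ as $z\to\infty$.

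This last convergence is the main obstacle, and I would extract it from Proposition \ref{lemma_martV} by a double-limit argument. A direct expansion yields
$V^{x'}_{\widetilde C(z,\nu^*)}=x'\,W_{\widetilde C(z,\nu^*)}(\lambda^*)+\partial W_{\widetilde C(z,\nu^*)}(\lambda^*)$.
For $\P_x$-almost every realization $\omega$ there exists $x'_0(\omega)<\infty$ such that whenever $x'\geq x'_0(\omega)$ no particle in the tree ever hits the curved barrier $\mathbf{\Gamma}^{(-x',\lambda^*)}$, so $\widetilde C(z,\nu^*)=C(z,\nu^*)$ for every $z$; on this event, letting $z\to\infty$ and invoking Proposition \ref{lemma_martV} together with $W_{C(z,\nu^*)}(\lambda^*)\to 0$ yields $\partial W_{C(z,\nu^*)}(\lambda^*)\to V^{x'}(\lambda^*)$. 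The identification $V^{x'}(\lambda^*)\to\partial W(\lambda^*,x)$ a.s.\ as $x'\to\infty$ then follows from \cite[Theorem 4.2]{RSYa}, and the moment hypothesis $\mathbf E(L(\log^+L)^2)<\infty$ is essential at this point, since it is precisely the condition that makes the truncated martingales $V^{x'}_t(\lambda^*)$ uniformly integrable and allows the extraction of the (only a.s.-convergent) derivative martingale limit $\partial W(\lambda^*,x)$.
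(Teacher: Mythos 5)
Your proposal follows essentially the same route as the paper: reduce to $t=0$ via the product martingale $M_z(\nu^*)$, apply the critical asymptotic \eqref{asym_criti_log} to convert $-\log M_z(\nu^*)$ into $\beta\langle Y_z, z e^{-\lambda^* z}\psi(\cdot,\lambda^*)\rangle(1+o(1))$, identify the limit of that quantity with $\partial W(\lambda^*,x)$ through the truncated stopping-line martingale $V^{x'}_{\widetilde C(z,\nu^*)}$ of Proposition \ref{lemma_martV} together with the fact that for large $x'$ the whole tree avoids the barrier $\mathbf{\Gamma}^{(-x',\lambda^*)}$, and finish with \cite[Theorem 1.3(ii)]{RSYa} and uniqueness of the initial value problem. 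The only step whose justification does not hold as written is your claim that the error term $e^{-\lambda^* z}\sum_u \psi_{\lambda}(X_u(\sigma_u),\lambda^*)$, bounded by a constant times $W_{C(z,\nu^*)}(\lambda^*)$, tends to $0$ because $W(\lambda^*,x)=0$: Proposition \ref{thrm_W} gives stopping-line convergence only for $|\lambda|<\lambda^*$, and \cite[Theorem 1.1]{RSYa} concerns the time-parametrized martingale, so neither yields $W_{C(z,\nu^*)}(\lambda^*)\to 0$ directly. This is easily repaired inside your own framework (on the event that no particle hits $\mathbf{\Gamma}^{(-x',\lambda^*)}$ for two distinct values of $x'$, subtracting the two convergent quantities $V^{x'}_{\widetilde C(z,\nu^*)}$ forces $W_{C(z,\nu^*)}(\lambda^*)\to 0$), and the paper sidesteps it altogether by observing that $(x'+z-\psi_{\lambda}/\psi)/z\to 1$ uniformly, so that $\langle Y_z^{(-x',\lambda^*)}, z e^{-\lambda^* z}\psi\rangle$ and $V^{x'}_{\widetilde C(z,\nu^*)}$ have the same limit without any separate decomposition into additive and derivative pieces.
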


\begin{proof}

From Proposition \ref{lemma_martV}, we have
\begin{equation*}
\lim_{z\rightarrow\infty} V_{\widetilde{C}(z,\nu^*)}^x =
\lim_{z\rightarrow\infty}
\left\langle Y_z^{(-x,\lambda^*)}, \, e^{- \lambda^* z} \psi(\cdot,\lambda^*)\left(x + z - \frac{\psi_{\lambda}(\cdot,\lambda^*)}{\psi(\cdot,\lambda^*)} \right) \right\rangle = V^x(\lambda^*), \quad \P_y\text{-a.s.}
\end{equation*}
Notice that for fixed $x\geq 0$,
\begin{equation*}
\left(x + z - \frac{\psi_{\lambda}(w,\lambda^*)}{\psi(w,\lambda^*)} \right) / z \rightarrow 1 \text{ as $z\rightarrow\infty$ uniformly in $w\in [0,1]$}.
\end{equation*}
Therefore,
\begin{equation*}
\lim_{z\rightarrow\infty}
\left\langle Y_z^{(-x,\lambda^*)}, \,  z e^{- \lambda^* z} \psi(\cdot,\lambda^*)  \right\rangle = V^x(\lambda^*), \quad \P_y\text{-a.s.}
\end{equation*}
Recall that $\gamma^{(-x,\lambda^*)}$ is the event that the BBMPE remains entirely to the right of
$\mathbf{\Gamma}^{(-x,\lambda^*)}$.
Note that on the event $\gamma^{(-x,\lambda^*)}$, $V^x(\lambda^*) = \partial W(\lambda^*,y)$ $\P_y$-almost surely and $Y_z^{(-x,\lambda^*)} = Y_z$ where $Y_z = \sum_{u\in C(z,\nu^*)} \delta_{\{X_u(\sigma_u) \}}$. Thus it follows that under $\P_y$,
\begin{equation*}
\lim_{z\rightarrow\infty}
\langle Y_z, \,  z e^{- \lambda^* z} \psi(\cdot,\lambda^*)  \rangle = \partial W(\lambda^*,y) \quad \text{on } \gamma^{(-x,\lambda^*)}.
\end{equation*}
Using the fact $\P(\gamma^{(-x,\lambda^*)})\rightarrow 1$ as $x\rightarrow\infty$, we have
\begin{equation*}
\lim_{z\rightarrow\infty}
\left\langle Y_z, \,  z e^{- \lambda^* z} \psi(\cdot,\lambda^*)  \right\rangle = \partial W(\lambda^*,y) \quad \P_y\text{-a.s.}
\end{equation*}
So by the dominated convergence theorem and the asymptotic behavior \eqref{asym_criti_log},
\begin{align*}
\mathbf u(0,y) &= \lim_{z\rightarrow\infty} \E_y e^{\left\langle Y_z, \; \log \mathbf u\left(\frac{\cdot-z}{\nu^*}, \cdot\right)\right\rangle} = \E_y \lim_{z\rightarrow\infty} e^{-\left\langle Y_z, \; -\log \mathbf u\left(\frac{\cdot-z}{\nu^*}, \cdot\right)\right\rangle}\\
&= \E_y \lim_{z\rightarrow\infty} e^{-\left\langle Y_z, \; \beta ze^{-\lambda z}\psi(\cdot,\lambda^*)\right\rangle} = \E_y e^{-\beta \partial W(\lambda^*,y)}.
\end{align*}
\cite[Theorem 1.3(ii)]{RSYa} shows that $ \E_x \exp\left\{-\beta e^{\gamma(\lambda^*)t} \partial W(\lambda^*,x) \right\}$,  as a function of $(t,x)$, is a solution of the following initial value problem:
\begin{align*}
\frac{\partial \mathbf u}{\partial t} = \frac{1}{2} \frac{\partial^2 \mathbf u}{\partial x^2} + \mathbf g\cdot (\mathbf f(\mathbf u)-\mathbf u),\quad \mathbf u(0,x) = \E_x e^{-\beta \partial W(\lambda^*,x)}.
\end{align*}
Therefore, $\mathbf u(t,x)$ and $ \E_x \exp\left\{-\beta e^{\gamma(\lambda^*)t} \partial W(\lambda^*,x) \right\}$ are solutions of the above initial value problem.
The uniqueness of solutions of initial value problem implies
\eqref{def-u-critical} holds.
\end{proof}

\begin{proof}[Proof of Theorem \ref{thrm3}]
Combining Theorem \ref{uniq-super}, Theorem \ref{uniq-critical} and \cite[Theorem 1.3]{RSYa}, we have Theorem \ref{thrm3}.
\end{proof}

\section{Appendix}

\subsection{Appendix B}

\begin{lemma}\label{local mart}
Let $\{f(B_t), t\geq 0\}$ be defined by \eqref{def-f-St}. For any $0<s<t$,
\begin{align}
\Pi_x^{(y,\lambda^*)}\left[f(B_{t\wedge\tau_z})| \F_{s}\right]= f(B_{s\wedge\tau_z}),
\end{align}
where $\tau_{z}$ is defined by \eqref{tau-z},
\end{lemma}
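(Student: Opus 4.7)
The plan is to use the strong Markov property under the changed measure $\Pi_x^{(y,\lambda^*)}$ together with the Feynman-Kac-type identity \eqref{v_feynman_crit_y} that is already established at time $-s$.

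First I would split the conditional expectation on $\{\tau_z \le s\}$ and $\{\tau_z > s\}$. On $\{\tau_z \le s\}$, the random variable $f(B_{t\wedge\tau_z}) = f(B_{\tau_z})$ is $\mathcal{F}_{\tau_z}$-measurable, hence $\mathcal{F}_s$-measurable, and equals $f(B_{s\wedge\tau_z})$. So the nontrivial case is $\{\tau_z > s\}$, and it suffices to show
\begin{equation*}
\Pi_x^{(y,\lambda^*)}\!\bigl[f(B_{t\wedge\tau_z})\,\mathbf 1_{\{\tau_z>s\}}\,\big|\,\mathcal{F}_s\bigr] = f(B_s)\,\mathbf 1_{\{\tau_z>s\}}.
\end{equation*}

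The key step is to verify the following form of the Markov property: on $\{\tau_z>s\}\subset\{\tau^y_{\lambda^*}>s\}$, conditional on $\mathcal{F}_s$, the shifted process $\{B_{s+r},\,r\ge 0\}$ under $\Pi_x^{(y,\lambda^*)}$ has the same law as $\{B_r,\,r\ge 0\}$ under $\Pi_{B_s}^{(y+\gamma'(\lambda^*)s,\,\lambda^*)}$. This comes down to a direct Radon-Nikodym computation: using the definition \eqref{mart_Lambda}, one checks that on $\{\tau^y_{\lambda^*}>t\}$,
\begin{equation*}
\frac{\Lambda_t^{(y,\lambda^*)}}{\Lambda_s^{(y,\lambda^*)}} = \frac{\Lambda_{t-s}^{(y',\lambda^*)}}{\Lambda_0^{(y',\lambda^*)}}\bigg|_{\tilde B_r = B_{s+r}},\qquad y':=y+\gamma'(\lambda^*)s,
\end{equation*}
because the ``$y+\gamma'(\lambda^*)t+h(B_t)$'' factor translates cleanly as $y+\gamma'(\lambda^*)t+h(B_t)=y'+\gamma'(\lambda^*)(t-s)+h(\tilde B_{t-s})$. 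The stopping times also transform correctly: both $\tau^y_{\lambda^*}-s$ and $\tau_z-s$ coincide with the analogous stopping times for the shifted process with parameter $y'$.

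Applying this Markov property, with $r=t-s$ and $\tilde\tau_z$ denoting the analogue of $\tau_z$ for the shifted process (equal to $\tau_z-s$ on $\{\tau_z>s\}$),
\begin{align*}
&\Pi_x^{(y,\lambda^*)}\!\bigl[f(B_{t\wedge\tau_z})\,\big|\,\mathcal{F}_s\bigr]\mathbf 1_{\{\tau_z>s\}}\\
&\quad=e^{-\int_0^s \mathbf g(B_r)\mathbf w(-r,B_r)\,\mathrm{d}r}\,\Pi_{B_s}^{(y',\lambda^*)}\!\left[\widehat{\mathbf w}\bigl(-s-(t-s)\wedge\tilde\tau_z,\tilde B_{(t-s)\wedge\tilde\tau_z},y\bigr)\right.\\
&\qquad\qquad\qquad\qquad\qquad\qquad\times\left. e^{-\int_0^{(t-s)\wedge\tilde\tau_z}\mathbf g(\tilde B_u)\mathbf w(-s-u,\tilde B_u)\,\mathrm{d}u}\right]\mathbf 1_{\{\tau_z>s\}}.
\end{align*}
Finally, I would invoke the representation \eqref{v_feynman_crit_y} with $T=-s$, so that $y-\nu^*T=y+\gamma'(\lambda^*)s=y'$ and $\tau_z(y-\nu^*T)$ is exactly the above $\tilde\tau_z$. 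The inner conditional expectation then collapses to $\widehat{\mathbf w}(-s,B_s,y)$, yielding $f(B_s)\mathbf 1_{\{\tau_z>s\}}=f(B_{s\wedge\tau_z})\mathbf 1_{\{\tau_z>s\}}$ as desired.

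The main obstacle is the careful bookkeeping for the Markov property under the non-time-homogeneous change of measure: the parameter ``$y$'' in $\Pi_\cdot^{(y,\lambda^*)}$ must be shifted to ``$y+\gamma'(\lambda^*)s$'' precisely because the martingale $\Lambda^{(y,\lambda^*)}$ carries the time-dependent weight $y+\gamma'(\lambda^*)t+h(B_t)$. Once this shift is correctly matched with the $T=-s$ choice in \eqref{v_feynman_crit_y} (so that $y-\nu^*T$ equals the new measure parameter), the rest of the argument is bookkeeping.
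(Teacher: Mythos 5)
Your proposal is correct and follows essentially the same route as the paper: split on $\{\tau_z\le s\}$ versus $\{\tau_z>s\}$, use the Markov property under the changed measure (which the paper verifies by the same telescoping of $\Lambda^{(y,\lambda^*)}$, reverting to $\Pi_x$ and back, with the parameter shift $y\mapsto y+\nu^*s$ and $\tau_z(y+\nu^*s)=\tau_{z-\nu^*s}$), and then apply the representation \eqref{v_feynman_crit_y} with $T=-s$. The bookkeeping you identify as the main obstacle is exactly the content of the paper's displayed computation.
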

\begin{proof}
For any  $s>0$, we use $\theta_s$ to denote the  shift operator.
First note that
\begin{align}\label{I+II}
&\Pi_x^{(y,\lambda^*)}\left(f(B_{t\wedge\tau_z}) | \F_{s}\right)
\\
= &\Pi_x^{(y,\lambda^*)}\left[\widehat {\mathbf w}(-t\wedge\tau_z ,B_{t\wedge\tau_z},y) e^{-\int_0^{t\wedge\tau_z} \mathbf g(B_r)\mathbf w(-r,B_r)\mathrm{d}r} \mathbf{1}_{s\geq\tau_z}\Big| \F_{s}\right]\notag\\
&+\Pi_x^{(y,\lambda^*)}\left[\widehat {\mathbf w}(-t\wedge\tau_z ,B_{t\wedge\tau_z},y) e^{-\int_0^{t\wedge\tau_z} \mathbf g(B_r)\mathbf w(-r,B_r)\mathrm{d}r} \mathbf{1}_{s<\tau_z}\Big| \F_{s}\right]\notag\\
=:&I+II.\notag
\end{align}
For $I$ we have
\begin{align}\label{for-I}
I=\widehat {\mathbf w}(-\tau_z ,B_{\tau_z},y) e^{-\int_0^{\tau_z} \mathbf g(B_r)\mathbf w(-r,B_r)\mathrm{d}r} \mathbf{1}_{s\geq\tau_z}.
\end{align}
For $II$ we will prove that
\begin{align}\label{for-II}
II=\widehat {\mathbf w}(-s ,B_{s},y) e^{-\int_0^{s} \mathbf g(B_r)\mathbf w(-r,B_r)\mathrm{d}r} \mathbf{1}_{s<\tau_z},
\end{align}
which is equivalent to
\begin{align}\label{for-II2}
&\Pi_x \left( \frac{\Lambda_{t\wedge\tau_z}^{(y,\lambda^*)}} {\Lambda_{0}^{(y,\lambda^*)}} \widehat {\mathbf w}(-t\wedge\tau_z ,B_{t\wedge\tau_z},y) e^{-\int_0^{t\wedge\tau_z} \mathbf g(B_r)\mathbf w(-r,B_r)\mathrm{d}r} \mathbf{1}_{\{\Lambda_{t\wedge\tau_z}^{(y,\lambda^*)}>0 \}} \mathbf{1}_{s<\tau_z}\Big| \F_{s}\right)\\
=&\frac{\Lambda_{s}^{(y,\lambda^*)}}{\Lambda_{0}^{(y,\lambda^*)}}\widehat {\mathbf w}(-\tau_z ,B_{s},y) e^{-\int_0^{s} \mathbf g(B_r)\mathbf w(-r,B_r)\mathrm{d}r} \mathbf{1}_{s<\tau_z}.
\end{align}
Recall that
$$\tau_{z} = \tau_z(y) = \inf\{t\geq 0: y+\gamma'(\lambda^*)t+h(B_t) \leq z \},$$
and $\tau_z(y+\nu^*s) = \tau_{z-\nu^*s}$. For $0<s<t$, we have on $\{s<\tau_z\}$,
$$
t\wedge\tau_z = s + (t-s)\wedge\tau_{z-\nu^*s}\circ \theta_{s},$$
and
$$
B_{t\wedge\tau_z} = B_{(t-s)\wedge\tau_{z-\nu^*s}}\circ\theta_{s}.
$$
Using the Markov property of $\{B_t,t\geq 0\}$ and the fact that $\{\Lambda_{t\wedge\tau_z}^{(y,\lambda^*)}>0\}$, we have
\begin{align}
&\mbox{left side of \eqref{for-II2}}
\\=&
\Pi_x \left(  (\Lambda_{0}^{(y,\lambda^*)})^{-1}e^{-\gamma(\lambda^*)t\wedge\tau_z -\lambda^* B_{t\wedge\tau_z} + \int_0^{t\wedge\tau_z}\mathbf g(B_r)\mathrm{d}r} \psi(B_{t\wedge\tau_z},\lambda^*)(y+\gamma'(\lambda^*)t\wedge\tau_z + h(B_{t\wedge\tau_z})) \right.
\\&
\quad \left. \widehat {\mathbf w}(-t\wedge\tau_z ,B_{t\wedge\tau_z},y) e^{-\int_0^{t\wedge\tau_z} \mathbf g(B_r)\mathbf w(-r,B_r)\mathrm{d}r}\mathbf{1}_{s<\tau_z} \big| \F_{s} \right)
\\=  &
 \mathbf{1}_{s<\tau_z}(\Lambda_{0}^{(y,\lambda^*)})^{-1} e^{-\gamma(\lambda^*)s  + \int_0^{s}\mathbf g(B_r)(1-\mathbf w(-r,B_r))\mathrm{d}r} \\
&\times\Pi_x \Big{[}  e^{-\gamma(\lambda^*)((t-s)\wedge\tau_{z-\nu^*s})-\lambda^* B_{(t-s)\wedge\tau_{z-\nu^*s}} + \int_0^{(t-s)\wedge\tau_{z-\nu^*s}}\mathbf g(B_r)(1-\mathbf w(-s-r,B_r))\mathrm{d}r}   \\
& \times \psi(B_{(t-s)\wedge\tau_{z-\nu^*s}},\lambda^*)(y+ \gamma'(\lambda^*)(s+(t-s)\wedge\tau_{z-\nu^*s})+ h(B_{(t-s)\wedge\tau_{z-\nu^*s}}))\\
&\times \widehat {\mathbf w}(-s -(t-s)\wedge\tau_{z-\nu^*s}, B_{(t-s)\wedge\tau_{z-\nu^*s}},y) \circ \theta_{s} \Big{|}\mathcal{F}_{s} \Big{]}
\\
= &\mathbf{1}_{s<\tau_z}(\Lambda_{0}^{(y,\lambda^*)})^{-1}e^{-\gamma(\lambda^*)s  + \int_0^{s}\mathbf g(B_r)(1-\mathbf w(-r,B_r))\mathrm{d}r} \\
&\times\Pi_{B_{s}}\Big{[}  e^{-\gamma(\lambda^*)((t-s)\wedge\tau_{z-\nu^*s}) -\lambda^* B_{(t-s)\wedge\tau_{z-\nu^*s}} + \int_0^{(t-s)\wedge\tau_{z-\nu^*s}}\mathbf g(B_r) (1-\mathbf w(-s-r,B_r))\mathrm{d}r}  \\
&\times \psi(B_{(t-s)\wedge\tau_{z-\nu^*s}},\lambda^*) (y+ \gamma'(\lambda^*)s + \gamma'(\lambda^*)((t-s)\wedge\tau_{z-\nu^*s})+ h(B_{(t-s)\wedge\tau_{z-\nu^*s}}))\\
&\times \widehat {\mathbf w}(-s-(t-s)\wedge\tau_{z-\nu^*s} ,B_{(t-s)\wedge\tau_{z-\nu^*s}},y) \Big{]}.
\end{align}
By \eqref{meas_change}, we have
\begin{align}
&\mbox{the left side of \eqref{for-II2}}\\
=& \Pi_{B_{s}}^{(y+\nu^*s,\lambda^*)}\left[   e^{-\int_0^{(t-s)\wedge\tau_{z-\nu^*s}}\mathbf g(B_r)\mathbf w(-s-r,B_r)\mathrm{d}r}  \widehat {\mathbf w}(-s-(t-s)\wedge\tau_{z-\nu^*s} ,B_{(t-s)\wedge\tau_{z-\nu^*s}},y) \right]\\
&\times \mathbf{1}_{s<\tau_z}(\Lambda_{0}^{(y,\lambda^*)})^{-1}  e^{-\gamma(\lambda^*)s + \int_0^{s}\mathbf g(B_r)(1-\mathbf w(-r,B_r))\mathrm{d}r} e^{-\lambda^* B_{s}}  \psi(B_{s},\lambda^*) (y+\nu^*s+h(B_{s}))\\
=& \Pi_{B_{s}}^{(y+\nu^*s,\lambda^*)}\left[   e^{-\int_0^{(t-s)\wedge\tau_{z-\nu^*s}}\mathbf g(B_r)\mathbf w(-s-r,B_r)\mathrm{d}r}  \widehat {\mathbf w}(-s-(t-s)\wedge\tau_{z-\nu^*s} ,B_{(t-s)\wedge\tau_{z-\nu^*s}},y) \right]\\
&\times \mathbf{1}_{s<\tau_z} (\Lambda_{0}^{(y,\lambda^*)})^{-1}\Lambda_{s}^{(y,\lambda^*)} e^{-\int_0^{s}\mathbf g(B_r)\mathbf w(-r,B_r)\mathrm{d}r} \notag  \\
=& \Pi_{B_{s}}^{(y+\nu^*s,\lambda^*)}\left[   e^{-\int_0^{(t-s)\wedge\tau_{z-\nu^*s}}\mathbf g(B_r)\mathbf w(-s-r,B_r)\mathrm{d}r}  \widehat {\mathbf w}(-s-(t-s)\wedge\tau_{z-\nu^*s} ,B_{(t-s)\wedge\tau_{z-\nu^*s}},y) \right]\\
&\times \frac{\Lambda_{s}^{(y,\lambda^*)}}{\Lambda_{0}^{(y,\lambda^*)}} e^{-\int_0^{s\wedge\tau_z}\mathbf g(B_r)\mathbf w(-r,B_r)\mathrm{d}r}\mathbf{1}_{s<\tau_z}\\
= &\frac{\Lambda_{s}^{(y,\lambda^*)}}{\Lambda_{0}^{(y,\lambda^*)}}\widehat {\mathbf w}(-s ,B_{s},y)   e^{-\int_0^{s\wedge\tau_z}\mathbf g(B_r)\mathbf w(-r,B_r)\mathrm{d}r}\mathbf{1}_{s<\tau_z}
=\mbox{the right side of \eqref{for-II2}},
\end{align}
where in the last equality we used \eqref{v_feynman_crit}  with $T$ replaced by $-s$, $x$ replaced by $B_{s}$, $y$ replaced by $y+\nu^*s$,
$t$ replaced by $t-s$, and $z$ replaced by $z-\nu^*s$.
Hence \eqref{for-II} holds.
Combining \eqref{I+II}, \eqref{for-I} and \eqref{for-II}, we obtain
\begin{align}
& \Pi_x^{(y,\lambda^*)}\left[\widehat {\mathbf w}(-t\wedge\tau, B_{t\wedge\tau_z},y) e^{-\int_0^{t\wedge\tau_z} \mathbf g(B_r)\mathbf w(-r,B_r)\mathrm{d}r}\big|\F_s\right]\\
=& \widehat {\mathbf w}(-s\wedge\tau, B_{s\wedge\tau}, y) e^{-\int_0^{s\wedge\tau} \mathbf g(B_r)\mathbf w(-r,B_r)\mathrm{d}r}
=f(B_{s\wedge\tau}).
\end{align}
\end{proof}

%\begin{acknowledgment*}
%\end{acknowledgment*}

\end{document}